\newcommand\blfootnote[1]{%
	\begingroup
	\renewcommand\thefootnote{}\footnote{#1}%
	\addtocounter{footnote}{-1}%
	\endgroup}
\theoremstyle{plain}
\newtheorem{theorem}{Theorem}[section]
\newtheorem{proposition}[theorem]{Proposition}
\newtheorem{lemma}[theorem]{Lemma}
\newtheorem{corollary}[theorem]{Corollary}
\theoremstyle{definition}
\newtheorem{remark}[theorem]{Remark}
\newtheorem{question}[theorem]{Question}
\numberwithin{equation}{section}
\newcommand{\NN}{\mathbb{N}}
\newcommand{\QQ}{\mathbb{Q}}
\newcommand{\RR}{\mathbb{R}}
\newcommand{\CC}{\mathbb{C}}
\newcommand{\TT}{\mathbb{T}}
\newcommand{\FF}{\mathbb{F}}
\newcommand{\II}{\mathbb{I}}
\newcommand{\Ac}{\mathcal{A}}
\newcommand{\AP}{\mathcal{AP}}
\newcommand{\Bc}{\mathcal{B}}
\newcommand{\Cc}{\mathcal{C}}
\newcommand{\Fc}{\mathcal{F}}
\newcommand{\Kc}{\mathcal{K}}
\newcommand{\Nc}{\mathcal{N}}
\newcommand{\Pc}{\mathcal{P}}
\newcommand{\Part}{\mathscr{P}}
\newcommand{\Tc}{\mathcal{T}}
\newcommand{\Per}{\operatorname{Per}}
\newcommand{\Rec}{\text{Rec}}
\newcommand{\Tran}{\text{Tran}}
\newcommand{\Bdsup}{\overline{\operatorname{Bd}}}
\newcommand{\dinf}{\underline{\operatorname{dens}}}
\newcommand{\BDsup}{\overline{\mathcal{BD}}}
\newcommand{\Dinf}{\underline{\mathcal{D}}}
\newcommand{\cl}{\overline}
\newcommand{\orb}{\mathcal{O}}
\newcommand{\res}{\arrowvert}
\newcommand{\eps}{\varepsilon}
\newcommand{\com}{\overline}
\newcommand{\fuz}{\hat}
\newcommand{\eend}{\operatorname{end}}
\newcommand{\send}{\operatorname{send}}
\begin{document}
\begin{center}
	\begin{LARGE}
		{\bf Topological dynamics for the endograph metric I: Equivalences with other metrics}
	\end{LARGE}
\end{center}

\begin{center}
	\begin{Large}
		Antoni L\'opez-Mart\'inez\blfootnote{\textbf{2020 Mathematics Subject Classification}: 37B02, 37B20, 47A16, 54A40, 54B20.\\ \textbf{Key words and phrases}: Topological dynamics, Fuzzy dynamical systems, Transitivity, Recurrence, Chaos.\\ \textbf{Journal-ref}: Iranian Journal of Fuzzy Systems, Volume 23, Number 2, (2026), 81--100.\\ \textbf{DOI}: \href{https://doi.org/10.22111/ijfs.2026.54067.9574}{https://doi.org/10.22111/ijfs.2026.54067.9574}}
	\end{Large}
\end{center}


\begin{abstract}
	Given a dynamical system $(X,f)$ we investigate several topological dynamical properties for its Zadeh extension $(\mathcal{F}(X),\hat{f})$ endowed with the endograph metric $d_{E}$. In particular, we prove that for topological $\mathcal{A}$-transitivity, topological $(\ell,\mathcal{A})$-recurrence, Devaney chaos, and the specification property, the endograph metric behaves similarly to the supremum metric~$d_{\infty}$, the Skorokhod metric~$d_{0}$ and the sendograph metric~$d_{S}$. Our results not only resolve certain open questions in the existing literature, but also yield completely new outcomes in terms of point-$\mathcal{A}$-transitivity.
\end{abstract}


\section{Introduction}

In this paper a {\em dynamical system} will be a pair $(X,f)$ formed by a continuous map $f:X\longrightarrow X$ acting on a metric space~$(X,d)$. We will be interested in the interplay between the dynamical properties presented by $(X,f)$ and those presented by the induced system $(\Fc(X),\fuz{f})$, where $\fuz{f}$ is the {\em Zadeh extension} (also called {\em fuzzification}) of $f$, acting on the space $\Fc(X)$ of normal fuzzy subsets of $X$.

Delving into the relations between the topological dynamical properties of $(X,f)$ and $(\Fc(X),\fuz{f})$ has been a matter of study in (at least) the following works:\\[-20pt]
\begin{enumerate}[--]
	\item Jard\'on et al.~studied in~\cite{JardonSan2021_FSS_expansive,JardonSan2021_IJFS_sensitivity,JardonSanSan2020_FSS_some,JardonSanSan2020_MAT_transitivity} some {\em expansive}, {\em sensitive}, {\em contractive} and {\em transitivity} notions;
	
	\item Mart\'inez-Gim\'enez et al.~explored in~\cite{MartinezPeRo2021_MAT_chaos} the concepts of {\em Devaney}, {\em Li-Yorke} and {\em distributional chaos};
	
	\item Bartoll et al.,~in~\cite{BartollMaPeRo2022_AXI_orbit}, focused on the so-called {\em specification} and {\em shadowing} properties;
	
	\item and \'Alvarez et al.,~in \cite{AlvarezLoPe2025_FSS_recurrence}, looked for various forms of {\em recurrence}.
\end{enumerate}
These works have studied the system $(\Fc(X),\fuz{f})$ for four different metrics on $\Fc(X)$ at the same time: in \cite{AlvarezLoPe2025_FSS_recurrence,BartollMaPeRo2022_AXI_orbit,JardonSanSan2020_FSS_some,JardonSanSan2020_MAT_transitivity,MartinezPeRo2021_MAT_chaos} the {\em supremum metric}~$d_{\infty}$ and the {\em Skorokhod metric}~$d_{0}$ were considered, but also in~\cite{JardonSan2021_FSS_expansive,JardonSan2021_IJFS_sensitivity,JardonSanSan2020_MAT_transitivity} the {\em sendograph metric}~$d_{S}$ and the {\em endograph metric}~$d_E$ were used. In this paper, and following the already mentioned works, for each metric $\rho \in \{ d_{\infty} , d_{0} , d_{S} , d_{E} \}$ we will denote the metric space $(\Fc(X),\rho)$ by $\Fc_{\infty}(X)$, $\Fc_{0}(X)$, $\Fc_{S}(X)$, and $\Fc_{E}(X)$ respectively. Moreover, and also following the previous works, we will denote by $\tau_{\infty}$, $\tau_{0}$, $\tau_{S}$, and $\tau_{E}$ the respective topologies induced on $\Fc(X)$.

The usual hyperextension $(\Kc(X),\com{f})$ on the space $\Kc(X)$ of non-empty compact subsets of $X$ is also studied in \cite{AlvarezLoPe2025_FSS_recurrence,BartollMaPeRo2022_AXI_orbit,JardonSanSan2020_MAT_transitivity,MartinezPeRo2021_MAT_chaos}. The general kind of result obtained there for each dynamical property is the equivalence of having such a property between $(\Kc(X),\com{f})$, $(\Fc_{\infty}(X),\fuz{f})$ and $(\Fc_{0}(X),\fuz{f})$ in~\cite{AlvarezLoPe2025_FSS_recurrence,BartollMaPeRo2022_AXI_orbit,MartinezPeRo2021_MAT_chaos}, but also the equivalence with $(\Fc_{S}(X),\fuz{f})$ in \cite{JardonSanSan2020_MAT_transitivity}. However, the systems $(\Fc_{S}(X),\fuz{f})$ and $(\Fc_{E}(X),\fuz{f})$ were not considered in \cite{AlvarezLoPe2025_FSS_recurrence,BartollMaPeRo2022_AXI_orbit,MartinezPeRo2021_MAT_chaos}, and the following problem was indirectly stated in \cite[Page~7]{JardonSanSan2020_MAT_transitivity}:

\begin{question}\label{Q:main1}
	Let $(X,f)$ be a dynamical system and assume that the induced fuzzy system $(\Fc_{E}(X),\fuz{f})$ is {\em topologically transitive}. Does it follow that $(\Kc(X),\com{f})$ is {\em topologically transitive}?
\end{question}

Since \cite[Theorem 3]{JardonSanSan2020_MAT_transitivity} states that the notion of {\em topological transitivity} is presented by any and all of the extended systems $(\Kc(X),\com{f})$, $(\Fc_{\infty}(X),\fuz{f})$, $(\Fc_{0}(X)\fuz{f})$ and $(\Fc_{S}(X),\fuz{f})$ if and only if the original dynamical system $(X,f)$ is {\em weakly-mixing}, and since \cite[Proposition 7]{JardonSanSan2020_MAT_transitivity} shows that when $(\Fc_{S}(X),\fuz{f})$ is {\em topologically transitive} then so is the system $(\Fc_{E}(X),\fuz{f})$, Question~\ref{Q:main1} can be rewritten as follows:

\begin{question}\label{Q:main2}
	Is a dynamical system $(X,f)$ {\em weakly-mixing}, or are $(\Kc(X),\com{f})$ $(\Fc_{\infty}(X),\fuz{f})$, $(\Fc_{0}(X),\fuz{f})$ and $(\Fc_{S}(X),\fuz{f})$ {\em topologically transitive}, if and only if $(\Fc_{E}(X),\fuz{f})$ is {\em topologically transitive}?
\end{question}

In view of Question~\ref{Q:main2} let us recall that, given a (not necessarily {\em topological transitivity}) property~$\Pc$ among those studied in~\cite{AlvarezLoPe2025_FSS_recurrence,BartollMaPeRo2022_AXI_orbit,JardonSanSan2020_MAT_transitivity,MartinezPeRo2021_MAT_chaos}, then the first step to establish the desired equivalences is proving that ``\textit{if the system $(\Kc(X),\com{f})$ has property $\Pc$, then $(\Fc_{\infty}(X),\fuz{f})$ has property $\Pc$}''. Thus, since each property~$\Pc$ depends on ``\textit{certain conditions that \textbf{all open subsets} fulfill}'', and since for any metric space $(X,d)$ we have that $\tau_{E} \subset \tau_{S} \subset \tau_{0} \subset \tau_{\infty}$ on $\Fc(X)$, the only difficulty in showing that~$\Pc$ is equivalent for $(\Kc(X),\com{f})$, $(\Fc_{\infty}(X),\fuz{f})$, $(\Fc_{0}(X),\fuz{f})$, $(\Fc_{S}(X),\fuz{f})$ and $(\Fc_{E}(X),\fuz{f})$ is proving that
\begin{equation}\label{eq:d_E->d_H}
	\text{``\textit{if the system $(\Fc_{E}(X),\fuz{f})$ has property $\Pc$, then $(\Kc(X),\com{f})$ has property $\Pc$}''.}\tag{$*$}
\end{equation}
The main objective of this paper is to provide a rather general argument to prove statement \eqref{eq:d_E->d_H} for most of the topological dynamical properties considered in the works~\cite{AlvarezLoPe2025_FSS_recurrence,BartollMaPeRo2022_AXI_orbit,JardonSanSan2020_MAT_transitivity,MartinezPeRo2021_MAT_chaos}. As a consequence we are able to answer Question~\ref{Q:main1} (and hence Question~\ref{Q:main2}) in the positive, and we also extend most of the results from~\cite{AlvarezLoPe2025_FSS_recurrence,BartollMaPeRo2022_AXI_orbit,JardonSanSan2020_MAT_transitivity,MartinezPeRo2021_MAT_chaos} by adding the remaining sendograph and endograph equivalences.

In particular: with Theorem~\ref{The:A-tran} we improve \cite[Theorem~3~and~Proposition~7]{JardonSanSan2020_MAT_transitivity} and \cite[Theorem~2]{MartinezPeRo2021_MAT_chaos}; with Theorem~\ref{The:(l,A)-rec} we extend \cite[Theorem~4.1~and~Corollary~4.1]{AlvarezLoPe2025_FSS_recurrence}; with Corollary~\ref{Cor:point-rec/tran} we improve the results \cite[Corollary~5.1]{AlvarezLoPe2025_FSS_recurrence} and \cite[Theorem~4]{JardonSanSan2020_MAT_transitivity}; with Lemma~\ref{Lem:point-A-rec/tran} we extend \cite[Proposition~9]{JardonSanSan2020_MAT_transitivity}; with both Lemma~\ref{Lem:periodic} and Theorem~\ref{The:Devaney} we improve \cite[Proposition~2,~Corollary~1~and~Theorem~1]{MartinezPeRo2021_MAT_chaos}; and Theorem~\ref{The:sp} and Corollary~\ref{Cor:sp} extend \cite[Theorem~2, Corollary~1 and Theorem~3]{BartollMaPeRo2022_AXI_orbit}. We also obtain some completely new outcomes in terms of {\em point-$\Ac$-transitivity} (see Theorem~\ref{The:point-A-tran} and Corollary~\ref{Cor:point-BD-tran}).

The paper is organized as follows. In Section~\ref{Sec_2:notation} we introduce the general background on fuzzy sets together with the key lemma that will allow us to obtain \eqref{eq:d_E->d_H} for most of the aforementioned dynamical properties (see Lemma~\ref{Lem:key} below). Then we divide the generalizations obtained into two sections: in~Section~\ref{Sec_3:t.r.Ff} we look at several types of {\em transitivity} and {\em recurrence} notions, and in~Section~\ref{Sec_4:chaos_sp} we focus on {\em Devaney chaos} and the so-called {\em specification property}. Finally, we summarize the improvements obtained in Section~\ref{Sec_5:conclusions}, where we also establish some possible future lines of research.

\section{General background and a key lemma}\label{Sec_2:notation}

In this section we introduce the fuzzy results that we need in the sequel: first, we recall the definition of~$\Fc(X)$ and~$\fuz{f}$ for a given dynamical system $(X,f)$; second, we recall the definitions of the different metrics that we use along the paper; and third, we prove the key lemma that allows us to develop the results stated at the Introduction (see Lemma~\ref{Lem:key} below). From now on let $\NN$ be the set of strictly positive integers, set $\NN_0 := \NN \cup \{0\}$, and let us denote the unit interval $[0,1]$ with the symbol $\II$.

\subsection[The space of normal fuzzy sets F(X)]{The space of normal fuzzy sets $\Fc(X)$}

A {\em fuzzy~set} on a topological space $X$ is a function $u:X\longrightarrow\II$, where the value $u(x) \in \II$ denotes the degree of membership of the point $x$ in $u$. For each fuzzy set $u$, its {\em $\alpha$-level} is denoted by
\[
u_{\alpha} := \{ x \in X \ ; \ u(x) \geq \alpha \} \ \text{ for each } \alpha \in \ ]0,1] \quad \text{ and } \quad u_0 := \cl{\{ x \in X \ ; \ u(x)>0 \}}.
\]

We denote by $\Fc(X)$ the {\em hyperspace of normal fuzzy sets} on $X$, i.e.\ the space formed by the fuzzy sets~$u$ that are upper-semicontinuous functions and such that~$u_0$ is compact and~$u_1$ is non-empty. For each compact subset $K \subset X$ we will denote by $\chi_K:X\longrightarrow\II$ the {\em characteristic function on $K$}, which belongs to $\Fc(X)$. Note also that the function $u := \max_{1\leq j\leq N} (\alpha_j \cdot \chi_{K_j})$ belongs to $\Fc(X)$ for each pair of finite sequences of values $0 < \alpha_1 < \alpha_2 < ... < \alpha_N = 1$ and compact subsets $K_1,...,K_N \subset X$, and that for such a fuzzy set $u$ we have the equality
\[
u_{\alpha} = \bigcup \{ K_j \ ; \ 1\leq j\leq N \text{ with } \alpha_j \geq \alpha \} \quad \text{ for each } \alpha \in \II.
\]
A kind of converse is also true, when $X$ is a metric space, as we recall in Lemma~\ref{Lem:eps.pisos} below.

It follows from \cite[Propositions~3.1 and 4.9]{JardonSanSan2020_FSS_some} that every continuous map $f:X\longrightarrow X$ acting on a Hausdorff topological space $X$ induces a well-defined map $\fuz{f}:\Fc(X)\longrightarrow\Fc(X)$, called the {\em Zadeh extension} (sometimes referred as the {\em fuzzification}) of $f$. We have that $\hat{f}$ maps each normal fuzzy set $u \in \Fc(X)$ to the following normal fuzzy set:
\begin{equation*}
\fuz{f}(u):X\longrightarrow\II \quad \text{ where } \quad \left[\fuz{f}(u)\right](x) :=
\left\{
\begin{array}{lcc}
	\sup\{ u(y) \ ; \ y \in f^{-1}(\{x\}) \}, & \text{ if } f^{-1}(\{x\}) \neq \varnothing, \\[5pt]
	0, & \text{ if } f^{-1}(\{x\}) = \varnothing.
\end{array}
\right.
\end{equation*}
Moreover, when $X$ is a metric space then we have the following well-known properties of the Zadeh extension $\fuz{f}$, which we will use without citing them (see \cite{JardonSanSan2020_FSS_some,JardonSanSan2020_MAT_transitivity,RomanChal2008_CSF_some} for details):

\begin{proposition}
	Let $f:X\longrightarrow X$ be a continuous map acting on a metric space $(X,d)$, $u \in \Fc(X)$, $\alpha \in \II$, $n \in \NN_0$, and let $K$ be a compact subset of $X$. Then:
	\begin{enumerate}[{\em(a)}]
		\item $\left[ \fuz{f}(u) \right]_{\alpha} = f(u_{\alpha})$, i.e.\ the $\alpha$-level of the $\fuz{f}$-image coincides with the $f$-image of the $\alpha$-level;
		
		\item $\left( \fuz{f} \right)^n = \widehat{f^n}$, i.e.\ the composition of $\fuz{f}$ with itself $n$ times coincides with the fuzzification of $f^n$;
		
		\item $\fuz{f}\left( \chi_K \right) = \chi_{f(K)}$, i.e.\ the fuzzification $\fuz{f}$ is an extension of the original map $f$.
	\end{enumerate}
\end{proposition}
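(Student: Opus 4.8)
The plan is to prove (a) directly from the definition of $\fuz{f}$, and then to deduce (b) and (c) as essentially formal consequences, using repeatedly that a normal fuzzy set $u \in \Fc(X)$ is completely determined by its family of $\alpha$-levels through $u(x) = \sup\{ \alpha \in \ ]0,1] \ ; \ x \in u_\alpha \}$ (with the convention $\sup\varnothing := 0$), so that two members of $\Fc(X)$ with the same $\alpha$-levels for all $\alpha \in \ ]0,1]$ coincide as functions.

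For (a) I would first treat $\alpha \in \ ]0,1]$. The inclusion $f(u_\alpha) \subseteq \left[\fuz{f}(u)\right]_\alpha$ is immediate from the definition: if $x = f(y)$ with $u(y) \geq \alpha$, then $f^{-1}(\{x\}) \neq \varnothing$ and $\left[\fuz{f}(u)\right](x) = \sup\{ u(z) \ ; \ z \in f^{-1}(\{x\}) \} \geq u(y) \geq \alpha$. For the reverse inclusion the key point is that this supremum is \emph{attained} whenever it is positive: if $\left[\fuz{f}(u)\right](x) \geq \alpha > 0$, then $f^{-1}(\{x\})$ is nonempty and, being closed by continuity of $f$, the set $f^{-1}(\{x\}) \cap u_0$ is compact (since $u_0$ is compact); upper-semicontinuity of $u$ then yields a maximum on this compact set, and since $u$ vanishes off $u_0$ this maximum equals $\sup\{ u(z) \ ; \ z \in f^{-1}(\{x\}) \} \geq \alpha$, so some $y \in f^{-1}(\{x\})$ satisfies $u(y) \geq \alpha$, i.e.\ $y \in u_\alpha$ and $x = f(y) \in f(u_\alpha)$. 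For the level $\alpha = 0$ I would argue separately: directly from the definition one has $\{ x \ ; \ \left[\fuz{f}(u)\right](x) > 0 \} = f(\{ y \ ; \ u(y) > 0 \})$, so $\left[\fuz{f}(u)\right]_0 = \cl{f(\{ y \ ; \ u(y) > 0 \})}$; now $f(u_0) = f(\cl{\{ y \ ; \ u(y) > 0 \}})$ is a continuous image of a compact set, hence closed, and contains $f(\{ y \ ; \ u(y) > 0 \})$, so it contains its closure, while the standard inclusion $f(\cl{A}) \subseteq \cl{f(A)}$ for continuous $f$ gives the reverse containment.

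Granting (a), part (b) follows by induction on $n \in \NN_0$: the case $n = 0$ amounts to $\left[\widehat{\mathrm{id}_X}(u)\right](x) = \sup\{ u(y) \ ; \ y = x \} = u(x)$, and for the inductive step it is enough to show $\widehat{g \circ f} = \fuz{g} \circ \fuz{f}$ for continuous self-maps $f,g$ of $X$; by (a), for every $\alpha \in \II$,
\[
\left[\fuz{g}(\fuz{f}(u))\right]_\alpha = g\bigl(\left[\fuz{f}(u)\right]_\alpha\bigr) = g(f(u_\alpha)) = (g\circ f)(u_\alpha) = \left[\widehat{g\circ f}(u)\right]_\alpha ,
\]
so these fuzzy sets have the same $\alpha$-levels and hence agree; taking $g = f^n$ and using $(\fuz{f})^{n+1} = (\fuz{f})^n \circ \fuz{f} = \widehat{f^n} \circ \fuz{f} = \widehat{f^n \circ f} = \widehat{f^{n+1}}$ closes the induction. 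For part (c), (a) gives $\left[\fuz{f}(\chi_K)\right]_\alpha = f\bigl((\chi_K)_\alpha\bigr)$; since $K$ is compact, $(\chi_K)_\alpha = K$ for $\alpha \in \ ]0,1]$ and $(\chi_K)_0 = \cl{K} = K$, while $f(K)$ is compact so $(\chi_{f(K)})_\alpha = f(K)$ for all $\alpha \in \II$; thus $\fuz{f}(\chi_K)$ and $\chi_{f(K)}$ have identical $\alpha$-levels and therefore coincide.

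The step I expect to be the main obstacle is the reverse inclusion in (a): the only genuinely non-formal ingredient is the attainment of $\sup\{ u(z) \ ; \ z \in f^{-1}(\{x\}) \}$, which rests on combining the upper-semicontinuity of $u$ with the compactness of $u_0$, together with the (easy but separate) closure argument needed for the level $\alpha = 0$. Everything else — parts (b) and (c) — is then bookkeeping with level sets once (a) is in hand.
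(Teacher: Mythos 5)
Your proof is correct. Note, however, that the paper itself does not prove this proposition at all: it is stated as a collection of well-known properties of the Zadeh extension and simply referred to the cited literature (Jard\'on--S\'anchez--Sanchis, Rom\'an-Flores--Chalco-Cano), so there is no in-paper argument to compare against. Your self-contained argument is the standard one and is sound: the only genuinely non-formal point is exactly where you locate it, namely that $\sup\{u(z)\ ;\ z\in f^{-1}(\{x\})\}$ is attained when positive, which you get correctly from the closedness of $f^{-1}(\{x\})$, the compactness of $u_0$ (note $f^{-1}(\{x\})\cap u_0\neq\varnothing$ precisely because the supremum is positive), and upper-semicontinuity; the separate treatment of the level $\alpha=0$ via $f(\cl{A})\subset\cl{f(A)}$ and closedness of $f(u_0)$ is also needed and handled properly (using that compact subsets of a metric space are closed). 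The reduction of (b) and (c) to (a) through the identity $u(x)=\sup\{\alpha\in\ ]0,1]\ ;\ x\in u_\alpha\}$ is legitimate, with the implicit (and citable) fact that $\fuz{f}(u)\in\Fc(X)$ so that (a) may be applied to $\fuz{f}(u)$ in the inductive step.
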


To talk about the continuity of $\fuz{f}:\Fc(X)\longrightarrow\Fc(X)$ we will endow $\Fc(X)$ with the several fuzzy metrics mentioned in the Introduction (see Subsection~\ref{SubSec_2.2:metrics}). However, these fuzzy metrics are defined in terms of the so-called {\em Hausdorff distance}, so let us quickly recall here its precise definition. Given a metric space $(X,d)$ we will denote by $\Bc_d(x,\eps) \subset X$ the open $d$-ball centred at $x \in X$ and of radius $\eps>0$. Moreover, we will consider the spaces of sets
\[
\Cc(X) :=\left\{ C \subset X \ ; \ C \text{ is a non-empty closed set} \right\},
\]
\[
\Kc(X) := \left\{ K \subset X \ ; \ K \text{ is a non-empty compact set} \right\},
\]
and given two sets $C_1,C_2 \in \Cc(X)$ the {\em Hausdorff distance} between them is denoted and defined as
\[
d_H(C_1,C_2) := \max\left\{ \sup_{x_1 \in C_1} d(x_1,C_2) , \sup_{x_2 \in C_2} d(x_2,C_1) \right\}.
\]
This value may be infinite if either $C_1$ or $C_2$ are not compact. However, in $\Kc(X)$ we have that the map~$d_H:\Kc(X)\times\Kc(X)\longrightarrow[0,+\infty[$ defined as before is a metric, called the {\em Hausdorff metric}, and every continuous map $f:X\longrightarrow X$ induces a $d_H$-continuous map $\com{f}:\Kc(X)\longrightarrow\Kc(X)$, defined as~$\com{f}(K) := f(K) = \{ f(x) \ ; \ x \in K \}$ for each $K \in \Kc(X)$. This will be an extremely useful in-between system when comparing $(X,f)$ and $(\Fc(X),\fuz{f})$. It is also well-known that the topology induced in the space $\Kc(X)$ by $d_H$ coincides with the so-called {\em Vietoris topology}, but we will not delve into this further here. We will denote by $\Bc_H(K,\eps) \subset \Kc(X)$ the open $d_H$-ball centred at $K \in \Kc(X)$ and of radius $\eps>0$.\newpage

Given $C \in \Cc(X)$ and $\eps\geq 0$ we will write $C+\eps := \{ x \in X \ ; \ d(x,c) \leq \eps \text{ for some } c \in C \}$. The following are well-known (but very useful) facts:

\begin{proposition}\label{Pro:Hausdorff}
	Let $(X,d)$ be a metric space, $\eps\geq 0$, and let $C_1,C_2,C_3,C_4 \in \Cc(X)$. Then:
	\begin{enumerate}[{\em(a)}]
		\item We have that $d_H(C_1,C_2) \leq \eps$ if and only if $C_1 \subset C_2+\eps$ and $C_2 \subset C_1+\eps$.
		
		\item We always have that $d_H(C_1\cup C_2, C_3 \cup C_4) \leq \max\{ d_H(C_1,C_3) , d_H(C_2,C_4) \}$.
	\end{enumerate}
\end{proposition}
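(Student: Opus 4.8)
The plan is to reduce both items to the definition of $d_H$ as a maximum of two one-sided suprema, together with the elementary translation between the inequality $d(x,C)\le\eps$ and the membership $x\in C+\eps$.

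For part~(a): by definition $d_H(C_1,C_2)=\max\{\sup_{x_1\in C_1}d(x_1,C_2),\ \sup_{x_2\in C_2}d(x_2,C_1)\}$, so $d_H(C_1,C_2)\le\eps$ holds if and only if both one-sided suprema are $\le\eps$, i.e.\ if and only if $d(x_1,C_2)\le\eps$ for every $x_1\in C_1$ \emph{and} $d(x_2,C_1)\le\eps$ for every $x_2\in C_2$. Since, by the very definition of $C+\eps$, the condition ``$d(x,C)\le\eps$'' is the same as ``$x\in C+\eps$'', the first clause says exactly $C_1\subseteq C_2+\eps$ and the second says exactly $C_2\subseteq C_1+\eps$. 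Conjoining the two equivalences yields (a).

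For part~(b): put $\eps:=\max\{d_H(C_1,C_3),d_H(C_2,C_4)\}$, with nothing to prove if $\eps=+\infty$. The quickest route is via part~(a): from $d_H(C_1,C_3)\le\eps$ and $d_H(C_2,C_4)\le\eps$ we obtain $C_1\subseteq C_3+\eps$, $C_3\subseteq C_1+\eps$, $C_2\subseteq C_4+\eps$ and $C_4\subseteq C_2+\eps$; using the set identity $(A+\eps)\cup(B+\eps)=(A\cup B)+\eps$, taking unions gives $C_1\cup C_2\subseteq(C_3\cup C_4)+\eps$ and $C_3\cup C_4\subseteq(C_1\cup C_2)+\eps$, and a final application of part~(a) gives $d_H(C_1\cup C_2,C_3\cup C_4)\le\eps$, the desired inequality. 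Alternatively, one can argue directly from the identities $d(x,A\cup B)=\min\{d(x,A),d(x,B)\}$ and $\sup_{x\in A\cup B}\varphi(x)=\max\{\sup_{x\in A}\varphi(x),\sup_{x\in B}\varphi(x)\}$: these bound $\sup_{x\in C_1\cup C_2}d(x,C_3\cup C_4)\le\max\{\sup_{x\in C_1}d(x,C_3),\sup_{x\in C_2}d(x,C_4)\}\le\eps$ and, symmetrically, $\sup_{x\in C_3\cup C_4}d(x,C_1\cup C_2)\le\eps$, whence again $d_H(C_1\cup C_2,C_3\cup C_4)\le\eps$.

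I do not anticipate any genuine obstacle: both items are routine manipulations of infima and suprema once the definition of $d_H$ is unfolded. The only step deserving a line of care is the passage in part~(a) between ``being within distance $\eps$ of $C$'' and ``lying in $C+\eps$'', which is immediate from the definition of $C+\eps$; everything else is bookkeeping, and (b) collapses to (a).
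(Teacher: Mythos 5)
The paper never actually proves this proposition: it is recorded as a ``well-known fact'' with a pointer to \cite{IllanesNad1999_book_hyperspaces}, so there is no argument of the author's to compare yours with line by line. Your unfolding of the definition of $d_H$ is of course the natural route, the ``if'' direction of (a) is correct, and your second, direct argument for (b) (via $d(x,C_3\cup C_4)=\min\{d(x,C_3),d(x,C_4)\}$ and splitting the supremum over $C_1\cup C_2$) is complete and does not depend on (a) at all.

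There is, however, a genuine gap in your proof of the ``only if'' half of (a), located exactly at the step you declare immediate. With the paper's definition $C+\eps=\{x\in X \ ; \ d(x,c)\le\eps \text{ for some } c\in C\}$, membership in $C+\eps$ requires an \emph{attained} witness, whereas $d(x,C)\le\eps$ only bounds an infimum; thus $x\in C+\eps$ implies $d(x,C)\le\eps$, but the converse can fail for a non-compact closed set when the infimum is not attained. Consequently the implication ``$d_H(C_1,C_2)\le\eps \Rightarrow C_1\subset C_2+\eps$'' does not follow as you argue, and taken literally it is even false at exact equality: let $X=\{a\}\cup\{b_n \ ; \ n\in\NN\}$ with $d(a,b_n)=1+\tfrac1n$ and $d(b_n,b_m)=|\tfrac1n-\tfrac1m|$, and take $C_1=X$, $C_2=\{b_n \ ; \ n\in\NN\}$, both closed; then $d_H(C_1,C_2)=1$ but $d(a,b_n)>1$ for all $n$, so $a\notin C_2+1$ and $C_1\not\subset C_2+1$. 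The implication (and hence the statement as the paper intends to use it) is recovered under any of the following: strict inequality $d_H(C_1,C_2)<\eps$ (then one can pick $c$ with $d(x,c)<\eps$); reading $C+\eps$ as $\{x \ ; \ d(x,C)\le\eps\}$; or attainment of distances, e.g.\ compactness of the sets involved --- which is what saves the application in Lemma~\ref{Lem:key}, since there the relevant portion of an endograph at levels $\ge c>0$ is contained in $u_c\times\II$ and hence is compact. Your write-up should either add such a hypothesis/strictness or prove the attainment in the situations where (a) is invoked; note also that your first proof of (b), which routes through both directions of (a), inherits this gap, while your alternative direct proof of (b) stands as written.
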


We refer the reader to \cite{IllanesNad1999_book_hyperspaces} for a detailed study of $\Cc(X)$ and $\Kc(X)$, and we conclude this subsection with another useful lemma regarding fuzzy normal sets and the Hausdorff metric (see \cite{JardonSanSan2020_FSS_some,JardonSanSan2020_MAT_transitivity,MartinezPeRo2021_MAT_chaos}):

\begin{lemma}\label{Lem:eps.pisos}
	Let $(X,d)$ be a metric space. Given any set $u \in \Fc(X)$ and $\eps>0$ there exist numbers $0 = \alpha_0 < \alpha_1 < \alpha_2 < ... < \alpha_N = 1$ such that $d_H(u_{\alpha}, u_{\alpha_{j+1}})<\eps$ for all $\alpha \in ]\alpha_j, \alpha_{j+1}]$ and $0\leq j\leq N-1$. In particular, since $d_H(u_{\alpha}, u_{\alpha_1})<\eps$ for every $\alpha \in \ ]0, \alpha_1]$, we also have that $d_H(u_0,u_{\alpha_1})\leq\eps$.
\end{lemma}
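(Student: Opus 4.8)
The plan is to prove the lemma in three stages: first recording the order-theoretic structure of the level sets, then establishing a one-sided continuity property of the assignment $\alpha \mapsto u_{\alpha}$ with respect to $d_H$, and finally extracting a finite partition by a greedy construction whose termination is forced by the compactness of $u_0$.

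\textbf{Stage 1 (structure of the levels).} First I would record that, since $u$ is upper-semicontinuous, every level $u_{\alpha}$ with $\alpha \in \ ]0,1]$ is closed; being a subset of the compact set $u_0$ it is compact, and it is non-empty because $\varnothing \neq u_1 \subset u_{\alpha}$. The family is nested, $u_{\beta} \subset u_{\alpha}$ whenever $0 < \alpha \leq \beta \leq 1$, and one has the identities $u_{\alpha} = \bigcap_{0<\beta<\alpha} u_{\beta}$ for $\alpha \in \ ]0,1]$, together with $\bigcup_{0<\alpha\leq 1} u_{\alpha} = \{x \in X \ ; \ u(x)>0\}$, whose closure is $u_0$. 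All of this is immediate from the definition of the $\alpha$-levels.

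\textbf{Stage 2 (left-continuity in $d_H$).} Next I would prove that for each $\alpha \in \ ]0,1]$ and each $\eta>0$ there exists $\delta>0$ with $d_H(u_{\beta},u_{\alpha})<\eta$ for all $\beta \in \ ]\alpha-\delta,\alpha]$. For such $\beta$ one has $u_{\alpha} \subset u_{\beta}$, so $d_H(u_{\beta},u_{\alpha}) = \sup_{x\in u_{\beta}}d(x,u_{\alpha})$, and I argue by contradiction: if the claim fails there are $\beta_n\uparrow\alpha$ and points $x_n\in u_{\beta_n}$ with $d(x_n,u_{\alpha})\geq\eta$. All the $x_n$ lie in the compact set $u_{\beta_1} \subset u_0$, so after passing to a subsequence $x_n\to x$. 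For any fixed $\gamma<\alpha$ one has $x_n\in u_{\beta_n}\subset u_{\gamma}$ for $n$ large and $u_{\gamma}$ is closed, hence $x\in u_{\gamma}$; as this holds for all $\gamma<\alpha$, the identity from Stage~1 gives $x\in u_{\alpha}$, contradicting $d(x_n,u_{\alpha})\geq\eta$ and $x_n\to x$.

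\textbf{Stage 3 (finite partition).} With left-continuity in hand I would build the breakpoints greedily from the top. Set $\beta_0:=1$. Given $\beta_k>0$, let $G_k:=\{t\in[0,\beta_k[ \ ; \ d_H(u_{\beta},u_{\beta_k})<\eps \text{ for all } \beta\in\ ]t,\beta_k]\}$; by Stage~2 this set is non-empty, and it is an interval with right endpoint $\beta_k$, so $\beta_{k+1}:=\inf G_k$ satisfies $d_H(u_{\beta},u_{\beta_k})<\eps$ for every $\beta\in\ ]\beta_{k+1},\beta_k]$. If some $\beta_{k+1}=0$ we stop and relabel $0=\beta_{k+1}<\dots<\beta_0=1$ in increasing order as $\alpha_0<\dots<\alpha_N$. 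To see the process terminates, suppose not: then $1=\beta_0>\beta_1>\beta_2>\cdots$, and by the minimality of each $\beta_{k+1}$ (some $\beta\in\ ]t,\beta_{k+1}]$ with $d_H(u_{\beta},u_{\beta_k})\geq\eps$ for every $t<\beta_{k+1}$) combined with left-continuity at $\beta_{k+1}$, one gets $d_H(u_{\beta_{k+1}},u_{\beta_k})\geq\eps$ for all $k$. But the $u_{\beta_k}$ form an increasing sequence of compact subsets of $u_0$ whose union is dense in the compact set $V:=\cl{\bigcup_k u_{\beta_k}}$, and such a sequence converges to $V$ in $d_H$ (one inclusion is trivial since $u_{\beta_k}\subset V$; the other follows from total boundedness of $V$), so $d_H(u_{\beta_{k+1}},u_{\beta_k})\leq d_H(u_{\beta_{k+1}},V)+d_H(V,u_{\beta_k})\to 0$, a contradiction. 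Finally, for the ``in particular'' clause: $d_H(u_{\alpha},u_{\alpha_1})<\eps$ for all $\alpha\in\ ]0,\alpha_1]$ gives $u_{\alpha}\subset u_{\alpha_1}+\eps$ by Proposition~\ref{Pro:Hausdorff}(a); taking the union over $\alpha$, then the closure, and using that $u_{\alpha_1}+\eps$ is closed (as $u_{\alpha_1}$ is compact) yields $u_0\subset u_{\alpha_1}+\eps$, while $u_{\alpha_1}\subset u_0\subset u_0+\eps$ is trivial, so $d_H(u_0,u_{\alpha_1})\leq\eps$ again by Proposition~\ref{Pro:Hausdorff}(a).

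I expect the main obstacle to be the termination of the greedy construction in Stage~3 — that is, ruling out infinitely many Hausdorff-jumps of size at least $\eps$ — which is precisely where compactness of $u_0$ is genuinely used; the remaining work is routine bookkeeping with nested level sets and the elementary properties of $d_H$ from Proposition~\ref{Pro:Hausdorff}.
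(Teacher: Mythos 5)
Your proof is correct. Note that the paper itself does not prove Lemma~\ref{Lem:eps.pisos}: it is quoted from the literature (see \cite{JardonSanSan2020_FSS_some,JardonSanSan2020_MAT_transitivity,MartinezPeRo2021_MAT_chaos}), so there is no in-paper argument to compare against; your write-up is a complete, self-contained proof along the standard lines for this fact. The two ingredients you isolate are exactly the right ones: left-continuity of $\alpha \mapsto u_{\alpha}$ in $d_H$ (your Stage~2 compactness/contradiction argument is sound, since the supremum defining $d_H(u_{\beta_n},u_{\alpha})$ is attained on the compact level $u_{\beta_n}$ and the nested-intersection identity $u_{\alpha}=\bigcap_{0<\gamma<\alpha}u_{\gamma}$ closes the argument), and the impossibility of infinitely many $\eps$-jumps among an increasing sequence of compact subsets of the compact set $u_0$ (your total-boundedness argument that $u_{\beta_k}\to \cl{\bigcup_k u_{\beta_k}}$ in $d_H$ is the standard one). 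The greedy construction with $\beta_{k+1}:=\inf G_k$ is handled carefully — in particular you correctly combine the minimality of $\beta_{k+1}$ with left-continuity at $\beta_{k+1}$ via the triangle inequality to get $d_H(u_{\beta_{k+1}},u_{\beta_k})\geq\eps$ in the non-terminating case — and the ``in particular'' clause is derived correctly, using that $u_{\alpha_1}+\eps$ is closed because $u_{\alpha_1}$ is compact. The only cosmetic points are that in Stage~2 one should pass to a subsequence to make the $\beta_n$ increasing before asserting $x_n\in u_{\beta_1}$, and one may note explicitly that the chosen $\beta_n$ are strictly below $\alpha$; neither affects the argument.
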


\subsection[The supremum, Skorokhod, sendograph and endograph metrics on F(X)]{The supremum, Skorokhod, sendograph and endograph metrics on $\Fc(X)$}\label{SubSec_2.2:metrics}

In this subsection we introduce the different fuzzy metrics that we consider along the paper. From now on we let $(X,d)$ be a metric space, and we will define the {\em supremum}, {\em Skorokhod}, {\em sendograph} and {\em endograph} metrics by using the original metric $d$ of $X$, but also the notion of Hausdorff distance.

Following \cite{JardonSanSan2020_MAT_transitivity} we start by the {\em supremum metric} $d_{\infty}:\Fc(X)\times\Fc(X)\longrightarrow[0,+\infty[$, which is defined for each pair $u,v \in \Fc(X)$ as
\[
d_{\infty}(u,v) := \sup_{\alpha \in \II} d_H(u_{\alpha},v_{\alpha}),
\]
where $d_H$ denotes the Hausdorff metric on $\Kc(X)$. We will denote by $\Bc_{\infty}(u,\eps)$ the open $d_{\infty}$-ball centred at $u \in \Fc(X)$ and of radius $\eps>0$, and as mentioned in the Introduction we will denote by $\Fc_{\infty}(X)$ the metric space $(\Fc(X),d_{\infty})$ and by $\tau_{\infty}$ the topology induced on $\Fc(X)$ by $d_{\infty}$. This metric has also been called the {\em level-wise metric} in \cite{Kupka2011_IS_on} (see also the references included there), and it is well-known that the metric space $\Fc_{\infty}(X)$ is non-separable whenever $X$ has more than one point (see \cite[Proposition~4.5]{JardonSan2021_IJFS_sensitivity}).

In the second place we have the {\em Skorokhod metric} $d_{0}:\Fc(X)\times\Fc(X)\longrightarrow[0,+\infty[$, which is defined for each pair $u,v \in \Fc(X)$ as
\[
d_{0}(u,v) := \inf\left\{ \eps>0 \ ; \ \text{there is } \xi \in \Tc \text{ with } \sup_{\alpha\in\II} |\xi(\alpha)-\alpha| \leq \eps \text{ and } d_{\infty}(u,\xi\circ v) \leq \eps \right\},
\]
where
\[
\Tc := \left\{ \xi:\II\longrightarrow\II \ ; \ \xi \text{ is a strictly increasing homeomorphism of } \II \right\}.
\]
We will denote by $\Bc_{0}(u,\eps)$ the open $d_{0}$-ball centred at $u \in \Fc(X)$ and of radius $\eps>0$, and we will denote by $\Fc_{0}(X)$ the metric space $(\Fc(X),d_{0})$ and by $\tau_{0}$ the topology induced on $\Fc(X)$ by $d_{0}$. This metric was introduced, for the case in which $(X,d)$ is an arbitrary metric space, in the 2020 paper \cite{JardonSanSan2020_FSS_some}. The relations between $d_{0}$ and other metrics in this rather general context have recently been studied in \cite{Huang2022_FSS_some}. Since the identity self-map of $\II$ belongs to $\Tc$, it is not hard to check that $d_{0}(u,v) \leq d_{\infty}(u,v)$ and also that $d_{0}(u,\chi_K) = d_{\infty}(u,\chi_K)$ for every $u,v \in \Fc(X)$ and $K \in \Kc(X)$.

To introduce the {\em endograph} and {\em sendograph metrics} we need to consider an auxiliary metric $\com{d}$ on the product space $X\times\II$ as follows:
\[
\com{d}\left((x,\alpha),(y,\beta)\right) := \max\left\{ d(x,y) , |\alpha - \beta | \right\} \quad \text{ for each pair } (x,\alpha),(y,\beta) \in X\times\II.
\]
Given now any fuzzy set $u \in \Fc(X)$, the {\em endograph of $u$} is defined as the following set
\[
\eend(u) := \left\{ (x,\alpha) \in X\times\II \ ; \ u(x) \geq \alpha \right\},
\]
and the {\em sendograph of $u$} is defined as $\send(u) := \eend(u) \cap (u_0\times\II)$. Then, for each pair $u,v \in \Fc(X)$ the~{\em endograph metric}~$d_{E}$ on~$\Fc(X)$ is the Hausdorff distance $\com{d}_H$ on $\Cc(X\times\II)$ from $\eend(u)$ to $\eend(v)$, and the {\em sendograph metric} $d_S$ on $\Fc(X)$ is the Hausdorff metric $\com{d}_H$ on $\Kc(X\times\II)$ between the non-empty compact subsets $\send(u)$ and $\send(v)$. We will denote by $\Bc_{E}(u,\eps)$ and $\Bc_{S}(u,\eps)$ the open balls centred at $u \in \Fc(X)$ and of radius $\eps>0$ for each of the metrics $d_{E}$ and $d_{S}$ respectively. We also use the notation $\Fc_{E}(X)$ and $\Fc_{S}(X)$ instead of $(\Fc(X),d_{E})$ and $(\Fc(X),d_{S})$, and the symbols $\tau_{E}$ and $\tau_{S}$ for the respective topologies induced on $\Fc(X)$ by $d_{E}$ and $d_{S}$. Although $d_{E}$ is defined as a distance of closed but not necessarily compact sets on $X\times\II$, the fact that the supports of the fuzzy sets $u,v \in \Fc(X)$ are compact shows that $d_{E}(u,v)$ is finite and hence that $d_{E}$ is indeed a well-defined metric. Moreover, using Proposition~\ref{Pro:Hausdorff} one can check that $d_{E}(u,v) \leq d_{S}(u,v) \leq d_{\infty}(u,v)$ for every $u,v \in \Fc(X)$.

The above metrics come from the general theory of Spaces of Fuzzy Sets, each of them has its own role and importance, and there are many references showing this fact (see \cite{Skorokhod1956_TPA_limit,Billingsley1989_book_convergence,JooKim2000_FSS_the,JardonSanSan2020_FSS_some} for the historical development of the Skorokhod metric, and see \cite{Chen2000_book_fuzzy,Huang2018_FSS_characterizations,GrecoMosReQuel1998_AUFS_on} for the many applications of the endograph and sendograph metrics and their relation with the notion of $\Gamma$-convergence). The study of the relationships between these metrics has been an important topic addressed in the literature, and it is by now well-known that $d_{E}(u,v) \leq d_{S}(u,v) \leq d_{0}(u,v) \leq d_{\infty}(u,v)$ for all $u,v \in \Fc(X)$, being $d_{S}(u,v) \leq d_{0}(u,v)$ the only non-trivial inequality (see \cite[Proposition~2.7]{JardonSan2021_FSS_expansive}). Thus, we have and we will repeatedly use the following inclusion of topologies $\tau_{E} \subset \tau_{S} \subset \tau_{0} \subset \tau_{\infty}$. Moreover, given a continuous map $f:X\longrightarrow X$ on a metric space $(X,d)$, the continuity of the Zadeh extension $\fuz{f}:(\Fc(X),\rho)\longrightarrow(\Fc(X),\rho)$ for every $\rho \in \{ d_{\infty} , d_{0} , d_{S} , d_{E} \}$ is also well-known (see \cite{JardonSanSan2020_FSS_some} and \cite{Kupka2011_IS_on}).

\subsection{A key fact regarding the endograph metric}

Given a property $\Pc$ from those considered in~\cite{AlvarezLoPe2025_FSS_recurrence,BartollMaPeRo2022_AXI_orbit,JardonSanSan2020_MAT_transitivity,MartinezPeRo2021_MAT_chaos}, the way used there to prove the equivalence of such a property~$\Pc$ between the systems $(\Kc(X),\com{f})$, $(\Fc_{\infty}(X),\fuz{f})$ and $(\Fc_{0}(X),\fuz{f})$ is the following:
\begin{enumerate}[--]
	\item \textbf{Step~1}. One shows that if $(\Kc(X),\com{f})$ has property $\Pc$, then $(\Fc_{\infty}(X),\fuz{f})$ has property $\Pc$.
	
	\item \textbf{Step~2}. The inclusion $\tau_{0} \subset \tau_{\infty}$ trivially implies that also $(\Fc_{0}(X),\fuz{f})$ has property $\Pc$.
	
	\item \textbf{Step~3}. One comes back from $(\Fc_{0}(X),\fuz{f})$ to $(\Kc(X),\com{f})$ completing the circle.
\end{enumerate}
The main argument in \textbf{Step~3} is that given any $K \in \Kc(X)$, any $\eps>0$, and a set $u \in \Bc_{0}(\chi_K,\eps)$ fulfilling some kind of $\Pc$-condition with respect to $\fuz{f}$, then for each $\alpha \in \II$ we have that $u_{\alpha} \in \Bc_H(K,\eps)$ and that $u_{\alpha}$ also fulfills the same $\Pc$-condition as $u$ but with respect to $\com{f}$.

For the sendograph and endograph metrics we have that \textbf{Step~2} will follow from the inclusions $\tau_{E} \subset \tau_{S} \subset \tau_{0}$. For \textbf{Step~3}, the following lemma is the key to get \eqref{eq:d_E->d_H}, all the improvements stated in the Introduction, and to solve Questions~\ref{Q:main1}~and~\ref{Q:main2} in a rather general way:

\begin{lemma}\label{Lem:key}
	Let $(X,d)$ be a metric space, and assume that the sets $K \in \Kc(X)$ and $u \in \Fc(X)$ fulfill that $\delta := d_{E}(\chi_K,u) < \tfrac{1}{2}$. Then $d_H(K,u_{\alpha}) \leq \delta$ for every $\alpha \in \ ]\delta,1-\delta]$.
\end{lemma}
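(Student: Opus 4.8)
The plan is to unpack the definition of the endograph metric $d_E(\chi_K,u) = \com{d}_H(\eend(\chi_K),\eend(u)) = \delta$ and then translate the two one-sided Hausdorff inclusions it provides into the level-set inclusions $u_\alpha \subset K + \delta$ and $K \subset u_\alpha + \delta$, which by Proposition~\ref{Pro:Hausdorff}(a) give exactly $d_H(K,u_\alpha) \le \delta$. The key observation is that $\eend(\chi_K) = K \times \II$, since $\chi_K(x) \ge \alpha$ holds for all $\alpha \in \II$ precisely when $x \in K$ (using $\alpha=0$ forces nothing, but $\alpha \le 1$ and $x \in K$ gives membership; for $x \notin K$ we have $\chi_K(x) = 0 < \alpha$ for $\alpha > 0$, so $(x,0)$ still lies in the endograph, hence $\eend(\chi_K) = K \times \II$ once we also note $(x,0) \in \eend(\chi_K)$ iff $x \in \com{\{x : \chi_K(x) > 0\}} = K$). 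One should state this cleanly at the start.

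\textbf{First inclusion} ($u_\alpha \subset K + \delta$ for $\alpha > \delta$): fix $\alpha \in \ ]\delta, 1-\delta]$ and $x \in u_\alpha$, so $u(x) \ge \alpha$, meaning $(x,\alpha) \in \eend(u)$. The Hausdorff bound gives a point $(y,\beta) \in \eend(\chi_K) = K\times\II$ with $\com{d}((x,\alpha),(y,\beta)) \le \delta$, so in particular $d(x,y) \le \delta$ with $y \in K$; hence $x \in K + \delta$. (Here $\alpha > \delta$ is not even needed, but it does no harm.)

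\textbf{Second inclusion} ($K \subset u_\alpha + \delta$ for $\alpha \le 1-\delta$): fix $y \in K$ and consider the point $(y,1) \in K\times\II = \eend(\chi_K)$. The Hausdorff bound supplies $(x,\gamma) \in \eend(u)$ with $\com{d}((y,1),(x,\gamma)) \le \delta$, so $d(x,y) \le \delta$ and $|1-\gamma| \le \delta$, i.e.\ $\gamma \ge 1-\delta$. Since $(x,\gamma) \in \eend(u)$ means $u(x) \ge \gamma \ge 1-\delta \ge \alpha$, we get $x \in u_\alpha$, whence $y \in u_\alpha + \delta$. This is the step that genuinely uses $\alpha \le 1 - \delta$; the hypothesis $\delta < \tfrac12$ is what guarantees the interval $]\delta, 1-\delta]$ is non-empty so that the statement is not vacuous, and it also keeps $1-\delta > \delta \ge 0$ so the levels in question are honest $\alpha$-levels with $\alpha \in \ ]0,1]$.

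The main (mild) obstacle is bookkeeping the two sides of the Hausdorff distance correctly and being careful that the chosen witness point in $\eend(\chi_K)$ has the right second coordinate: choosing $(y,1)$ rather than $(y,\alpha)$ is what forces the witness in $\eend(u)$ to sit at height $\ge 1-\delta$, which is exactly what is needed to conclude membership in $u_\alpha$ for \emph{every} $\alpha \le 1-\delta$ simultaneously. No separability, compactness of $u_\alpha$, or Lemma~\ref{Lem:eps.pisos} is needed here — it is a direct computation — and the two inclusions combined with Proposition~\ref{Pro:Hausdorff}(a) finish the proof.
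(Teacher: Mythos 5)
Your strategy is the same as the paper's (translate the two one-sided Hausdorff inclusions between endographs into the level-set inclusions and invoke Proposition~\ref{Pro:Hausdorff}(a)), but there is a genuine error at the outset: the identification $\eend(\chi_K)=K\times\II$ is false. With the paper's definition $\eend(u)=\{(x,\alpha)\in X\times\II \ ; \ u(x)\geq\alpha\}$, every pair $(x,0)$ with $x\in X$ lies in \emph{every} endograph, since $u(x)\geq 0$ always; there is no closure-of-support convention built into this definition, so in fact $\eend(\chi_K)=(K\times\II)\cup(X\times\{0\})$, and your parenthetical attempt to restrict the $\alpha=0$ slice to $K$ contradicts your own observation that $(x,0)$ belongs to the endograph for $x\notin K$. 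This breaks your first inclusion as written: the witness $(y,\beta)\in\eend(\chi_K)$ close to $(x,\alpha)$ could a priori have $\beta=0$ and $y\notin K$, so you cannot conclude $y\in K$ and hence $x\in K+\delta$. The hypothesis $\alpha>\delta$, which you explicitly claim is ``not even needed'' for this inclusion, is precisely what rules this case out: $|\alpha-\beta|\leq\delta$ forces $\beta\geq\alpha-\delta>0$, hence $\chi_K(y)\geq\beta>0$, so $\chi_K(y)=1$ and $y\in K$ --- this is exactly how the paper argues. That the restriction is genuinely necessary can be seen with $X=\RR$, $K=\{0\}$ and $u=\max\{\chi_{\{0\}},\,0.1\cdot\chi_{[0,10]}\}$: here $\delta=d_E(\chi_K,u)=0.1<\tfrac12$, yet $u_{0.05}=[0,10]$ is nowhere near $K$, so the inclusion $u_\alpha\subset K+\delta$ fails for $\alpha\leq\delta$.

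Apart from this point, your argument coincides with the paper's proof: the second inclusion, obtained by choosing the witness $(y,1)\in\eend(\chi_K)$ so that the nearby point of $\eend(u)$ sits at height at least $1-\delta\geq\alpha$, is exactly the paper's argument, and your remarks on the role of $\delta<\tfrac12$ are correct. Once the first inclusion is repaired by actually using $\alpha>\delta$ as above (and dropping the false description of $\eend(\chi_K)$), the proof is complete and essentially identical to the one in the paper.
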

\begin{proof}
	Given any arbitrary but fixed $\alpha \in \ ]\delta,1-\delta]$ we must show that $d_H(K,u_{\alpha})\leq\delta$ or, equivalently, that we have the inclusions $K \subset u_{\alpha} + \delta$ and $u_{\alpha} \subset K + \delta$ (see Proposition~\ref{Pro:Hausdorff}).
	
	We start by showing $K \subset u_{\alpha} + \delta$. Indeed, since $d_{E}(\chi_K,u) = \delta$, by Proposition~\ref{Pro:Hausdorff} we have that
	\[
	\eend(\chi_K) \subset \eend(u) + \delta.
	\]
	Hence, given $x \in K$ we have that $(x,1) \in \eend(\chi_K)$ and that there exists $(y,\beta) \in \eend(u)$ for which
	\[
	\com{d}\left( (x,1) , (y,\beta) \right) = \max\{ d(x,y) , |1-\beta| \} \leq \delta.
	\]
	The latter implies that $u(y) \geq \beta \geq 1-\delta \geq \alpha$ so that $y \in u_{\alpha}$. We deduce that $x \in \{y\} + \delta \subset u_{\alpha} + \delta$, and by the arbitrariness of $x \in K$ we get that $K \subset u_{\alpha} + \delta$ as we had to show.
	
	Let us now check that $u_{\alpha} \subset K + \delta$. Indeed, and again since $d_{E}(\chi_K,u) = \delta$, we also have that
	\[
	\eend(u) \subset \eend(\chi_K) + \delta.
	\]
	Hence, given $x \in u_{\alpha}$ there exists some $(y,\beta) \in \eend(\chi_K)$ such that
	\[
	\com{d}\left( (x,\alpha) , (y,\beta) \right) = \max\{ d(x,y) , |\alpha - \beta| \} \leq \delta.
	\]
	Then $\beta \in [\alpha-\delta,\alpha+\delta] \subset \ ]0,1]$, and since $\chi_K(y)\geq\beta>0$ we have that $\chi_K(y)=1$ and $y \in K$. We deduce that $x \in \{y\} + \delta \subset K + \delta$, and by the arbitrariness of $x \in u_{\alpha}$ we get that $u_{\alpha} \subset K + \delta$.
\end{proof}


\section{Transitivity, recurrence and Furstenberg families}\label{Sec_3:t.r.Ff}

In this section we study both transitivity and recurrence-kind properties from the Furstenberg families point of view. We start by improving the main transitivity-kind results from~\cite{JardonSanSan2020_MAT_transitivity} and~\cite{MartinezPeRo2021_MAT_chaos}, solving both~Questions~\ref{Q:main1} and \ref{Q:main2} in the positive (see Remark~\ref{Rem:Q.1.2} below), and then we improve the main recurrence-kind results from~\cite{AlvarezLoPe2025_FSS_recurrence}. Finally we consider the case of complete metric spaces to improve the point-wise results form \cite{AlvarezLoPe2025_FSS_recurrence,JardonSanSan2020_MAT_transitivity}, but also obtaining new outcomes in terms of point-$\Ac$-transitivity.

From now on, given any dynamical system $(X,f)$ and any positive integer $N \in \NN$ we will denote by $f_{(N)}:X^N\longrightarrow X^N$ the {\em $N$-fold direct product} of $f$ with itself, i.e.\ the pair $(X^N,f_{(N)})$ will be the dynamical system
\[
f_{(N)} := \underbrace{f\times\cdots\times f}_{N} : \underbrace{X\times\cdots\times X}_{N} \longrightarrow \underbrace{X\times\cdots\times X}_{N},
\]
where $X^N:=X\times\cdots\times X$ is the {\em $N$-fold direct product} of $X$ with the usual product topology, and where we have the evaluation $f_{(N)}\left((x_1,...,x_N)\right) := (f(x_1),...,f(x_N))$ for each $N$-tuple $(x_1,...,x_N) \in X^N$.

\subsection{Transitivity-kind properties}\label{SubSec_3.1:Transitivity}

Following~\cite{GrossePe2011_book_linear}, the main transitivity-kind notions studied in Topological Dynamics are transitivity itself, and the mixing and weak-mixing properties. We recall that a dynamical system $(X,f)$ is called:
\begin{enumerate}[--]
	\item {\em topologically transitive} (or just {\em transitive}) if given any pair of non-empty open subsets $U,V \subset X$ there exists some (and hence infinitely many) $n \in \NN$ such that $f^n(U) \cap V \neq \varnothing$;
	
	\item {\em topologically mixing} (or just {\em mixing}) if given any pair of non-empty open subsets $U,V \subset X$ there exists some $n_0 \in \NN$ such that $f^n(U) \cap V \neq \varnothing$ for every $n\geq n_0$;
	
	\item {\em topologically weakly-mixing} (or just {\em weakly-mixing}) if the $2$-fold direct product system $(X^2,f_{(2)})$ is topologically transitive.
\end{enumerate}

The notion of {\em mixing} implies that of {\em weak-mixing}, which in its turn implies that of {\em transitivity}. Also, by a well-known result of Furstenberg we have that, once $(X,f)$ is weakly-mixing, then $(X^N,f_{(N)})$ is transitive for all $N \in \NN$ (see \cite[Theorem~1.51]{GrossePe2011_book_linear}). In addition, if $(X,f)$ is weakly-mixing, then either $X$ is a singleton (a fixed point), or $(X,d)$ has no isolated points (see \cite[Exercise~1.2.3]{GrossePe2011_book_linear}). These properties have recently been studied from the point of view of Furstenberg families and $\Ac$-transitivity:
\begin{enumerate}[--]
	\item a collection of sets $\Ac \subset \Part(\NN_0)$ is called a {\em Furstenberg family} (or just a {\em family}) if it is hereditarily upward but $\varnothing \notin \Ac$, i.e.\ if given $A \in \Ac$ and $B \subset \NN_0$ with $A \subset B$ then $B \in \Ac$ but $\Ac \neq \Part(\NN_0)$;
	
	\item a dynamical system $(X,f)$ is called {\em topologically $\Ac$-transitive} (or just {\em $\Ac$-transitive}), for a family $\Ac \subset \Part(\NN_0)$, if given any pair of non-empty open subsets $U,V \subset X$ the {\em return set from $U$ to $V$}, which will be denoted by $\Nc_f(U,V) := \{ n \in \NN_0 \ ; \ f^n(U) \cap V \neq \varnothing \}$, belongs to the family $\Ac$.
\end{enumerate}
Note that: the notion of {\em transitivity} coincides with that of {\em $\Ac_{\infty}$-transitivity} for the family~$\Ac_{\infty}$ formed by the infinite subsets of $\NN_0$; the notion of {\em mixing} coincides with that of {\em $\Ac_{cf}$-transitivity} for the family~$\Ac_{cf}$ of co-finite sets; and it is well-known but non-trivial that the property of {\em weak-mixing} coincides with that of {\em $\Ac_{th}$-transitivity} for the family~$\Ac_{th}$ formed by the so-called {\em thick sets} (i.e.\ the sets $A \subset \NN_0$ such that for every $\ell \in \NN$ there is $n \in A$ with $[\![ n , n+\ell ]\!] \subset A$), see for instance \cite[Theorem~1.54]{GrossePe2011_book_linear}.

We can now address our first main result, which improves \cite[Theorem~3 and Proposition~7]{JardonSanSan2020_MAT_transitivity} since we add the equivalence regarding the system $(\Fc_{E}(X),\fuz{f})$, but also improves \cite[Theorem~2]{MartinezPeRo2021_MAT_chaos} since we only assume that $\Ac$ is any Furstenberg family and not necessarily a filter (see Remark~\ref{Rem:Q.1.2} below):

\begin{theorem}\label{The:A-tran}
	Let $\Ac \subset \Part(\NN_0)$ be a Furstenberg family. Then, given a continuous map $f:X\longrightarrow X$ acting on a metric space $(X,d)$, the following statements are equivalent:
	\begin{enumerate}[{\em(i)}]
		\item $(X,f)$ is weakly-mixing and topologically $\Ac$-transitive;
		
		\item $(X^N,f_{(N)})$ is topologically $\Ac$-transitive for every $N \in \NN$;
		
		\item $(\Kc(X),\com{f})$ is topologically $\Ac$-transitive;
		
		\item $(\Fc_{\infty}(X),\fuz{f})$ is topologically $\Ac$-transitive;
		
		\item $(\Fc_{0}(X),\fuz{f})$ is topologically $\Ac$-transitive;
		
		\item $(\Fc_{S}(X),\fuz{f})$ is topologically $\Ac$-transitive;
		
		\item $(\Fc_{E}(X),\fuz{f})$ is topologically $\Ac$-transitive.
	\end{enumerate}
\end{theorem}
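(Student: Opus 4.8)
### Proof Strategy

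The plan is to establish the cycle of implications (i) $\Rightarrow$ (ii) $\Rightarrow$ (iii) $\Rightarrow$ (iv) $\Rightarrow$ (v) $\Rightarrow$ (vi) $\Rightarrow$ (vii) $\Rightarrow$ (iii), together with the additional arrow (iii) $\Rightarrow$ (i), so that all seven statements are equivalent. Several of the arrows are essentially free or known. The implication (ii) $\Rightarrow$ (iii) follows by approximating compact sets by finite sets: given non-empty open $\mathcal{U},\mathcal{V} \subset \Kc(X)$, one finds basic Vietoris-type neighbourhoods determined by finitely many open sets in $X$, passes to the product system $(X^N, f_{(N)})$ for a suitable $N$, and uses $\Ac$-transitivity there together with the fact that $\Nc_{f_{(N)}}$ of the product boxes is contained in $\Nc_{\com f}(\mathcal U,\mathcal V)$; hereditary upward closure of $\Ac$ finishes it. The chain (iii) $\Rightarrow$ (iv) is \textbf{Step~1} in the scheme described before Lemma~\ref{Lem:key}: one checks that the return set $\Nc_{\com f}$ between suitable hyperspace neighbourhoods is contained in $\Nc_{\fuz f}$ between $d_\infty$-balls around characteristic functions, using that $\fuz f^n(\chi_K) = \chi_{f^n(K)}$ and that $d_\infty(\chi_{K_1},\chi_{K_2}) = d_H(K_1,K_2)$. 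The arrows (iv) $\Rightarrow$ (v) $\Rightarrow$ (vi) $\Rightarrow$ (vii) are immediate from the inclusion of topologies $\tau_E \subset \tau_S \subset \tau_0 \subset \tau_\infty$, since $\Ac$-transitivity is inherited by passing to a coarser topology (fewer open sets to check, and return sets only grow). Finally (iii) $\Rightarrow$ (i) uses standard facts: $\Ac$-transitivity of $(\Kc(X),\com f)$ restricted to singletons gives $\Ac$-transitivity of $(X,f)$, while $\Ac$-transitivity implies ordinary transitivity (as $\Ac \subset \Ac_\infty$), and transitivity of $(\Kc(X),\com f)$ is well known to force $(X,f)$ to be weakly-mixing.

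The heart of the matter — and the genuinely new contribution — is the implication (vii) $\Rightarrow$ (iii), i.e.\ statement \eqref{eq:d_E->d_H} for $\Ac$-transitivity, and this is where Lemma~\ref{Lem:key} is deployed. Given non-empty open $\mathcal{U}, \mathcal{V} \subset \Kc(X)$, I would first shrink them to basic Hausdorff balls $\Bc_H(K_1,\eps)$ and $\Bc_H(K_2,\eps)$ with $K_1, K_2 \in \Kc(X)$ and, crucially, with $\eps < \tfrac12$ (we may always take $\eps$ as small as we like). The $d_E$-balls $\Bc_E(\chi_{K_1},\eps)$ and $\Bc_E(\chi_{K_2},\eps)$ are non-empty open subsets of $\Fc_E(X)$, so $\Ac$-transitivity of $(\Fc_E(X),\fuz f)$ yields $\Nc_{\fuz f}\big(\Bc_E(\chi_{K_1},\eps), \Bc_E(\chi_{K_2},\eps)\big) \in \Ac$. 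I would then show this return set is contained in $\Nc_{\com f}(\mathcal U, \mathcal V)$, and hereditary upward closure of $\Ac$ finishes the proof. So fix $n$ in the former return set: there is $u \in \Bc_E(\chi_{K_1},\eps)$ with $\fuz f^n(u) = \widehat{f^n}(u) \in \Bc_E(\chi_{K_2},\eps)$. Now $d_E(\chi_{K_1},u) < \eps < \tfrac12$, so Lemma~\ref{Lem:key} (with $\delta = d_E(\chi_{K_1},u)$) gives $d_H(K_1, u_\alpha) \le \delta < \eps$ for every $\alpha \in\ ]\delta, 1-\delta]$; pick any such $\alpha$, say $\alpha = \tfrac12$ (which lies in $]\delta,1-\delta]$ because $\delta < \tfrac12$), and set $L := u_{1/2} \in \Kc(X)$, so $L \in \Bc_H(K_1,\eps) \subset \mathcal U$. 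Similarly $d_E(\chi_{K_2}, \widehat{f^n}(u)) < \eps < \tfrac12$, and $[\widehat{f^n}(u)]_{1/2} = f^n(u_{1/2}) = f^n(L) = \com f^n(L)$ by part (a) of the Proposition on the Zadeh extension; so Lemma~\ref{Lem:key} applied to the pair $\chi_{K_2}, \widehat{f^n}(u)$ at level $\alpha = \tfrac12$ gives $d_H(K_2, f^n(L)) < \eps$, i.e.\ $\com f^n(L) \in \Bc_H(K_2,\eps) \subset \mathcal V$. Hence $\com f^n(\mathcal U) \cap \mathcal V \ne \varnothing$, so $n \in \Nc_{\com f}(\mathcal U, \mathcal V)$, as required.

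One technical point I would be careful about: the argument above requires $\delta = d_E(\chi_{K_1},u)$ and $d_E(\chi_{K_2},\widehat{f^n}(u))$ to both be strictly less than $\tfrac12$, which is why one must first replace the original open sets by $d_H$-balls of radius $\eps < \tfrac12$; and one must check that $L = u_{1/2}$ is non-empty and compact, which holds because $u_1 \ne \varnothing$ and $1 \ge \tfrac12 > \delta$ forces $u_1 \subset u_{1/2}$ nonempty, while $u_{1/2}$ is a closed subset of the compact $u_0$. The choice of level $\alpha = \tfrac12$ is convenient but any fixed value in $]\delta,1-\delta]$ independent of $u$ and $n$ would do; $\tfrac12$ works uniformly precisely because every admissible $\delta$ is below $\tfrac12$. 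I expect the main obstacle to be purely expository — making the reduction to small $d_H$-balls and the passage through $(X^N,f_{(N)})$ clean — rather than mathematical, since Lemma~\ref{Lem:key} already packages the only subtle estimate. The same template will then be reused verbatim (with $\Ac$-transitivity replaced by the relevant property) for Theorem~\ref{The:(l,A)-rec}, Theorem~\ref{The:Devaney} and Theorem~\ref{The:sp}.
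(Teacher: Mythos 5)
Your treatment of the genuinely new implication (vii) $\Rightarrow$ (iii) is correct and is essentially the paper's own argument: reduce to Hausdorff balls of radius $\eps\leq\tfrac12$, use $\Ac$-transitivity of $(\Fc_{E}(X),\fuz{f})$ between $\Bc_{E}(\chi_{K_1},\eps)$ and $\Bc_{E}(\chi_{K_2},\eps)$, and apply Lemma~\ref{Lem:key} at a level $\alpha\in\ ]\delta,1-\delta]$ (the paper takes an arbitrary such $\alpha$, you fix $\alpha=\tfrac12$; same estimate). The steps (ii) $\Rightarrow$ (iii), (iv) $\Rightarrow$ (v) $\Rightarrow$ (vi) $\Rightarrow$ (vii) and (iii) $\Rightarrow$ (i) are also fine. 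The gap lies in the portion you declare ``essentially free or known'', namely (i) $\Rightarrow$ (ii) and (iii) $\Rightarrow$ (iv).

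Your sketch of (iii) $\Rightarrow$ (iv) only verifies the return-set condition between $d_{\infty}$-balls centred at characteristic functions, and these balls are \emph{not} a base of $\tau_{\infty}$: characteristic functions are not $d_{\infty}$-dense (nor $d_{0}$-dense) in $\Fc(X)$, since for $u:=\max\{\tfrac12\chi_{K_1},\chi_{K_2}\}$ with $K_2\subsetneq K_1$ one has $d_{\infty}(u,\chi_K)\geq\max\{d_H(K_1,K),d_H(K_2,K)\}\geq\tfrac12 d_H(K_1,K_2)>0$ for every $K\in\Kc(X)$. So your argument does not yield $\Ac$-transitivity of $(\Fc_{\infty}(X),\fuz{f})$ on arbitrary open sets. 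The paper's proof of (iii) $\Rightarrow$ (iv) approximates arbitrary $u,v$ by finite-level fuzzy sets via Lemma~\ref{Lem:eps.pisos} and then needs a \emph{single} $n$ working simultaneously for the $N$ levels, i.e.\ it needs $\Ac$-transitivity of the product system $(\Kc(X)^N,\com{f}_{(N)})$. This is also why (i) $\Rightarrow$ (ii) cannot be waved away: for a Furstenberg family that is not a filter one cannot intersect finitely many return sets and stay in $\Ac$, so ``weakly-mixing and $\Ac$-transitive implies all finite products are $\Ac$-transitive'' is itself a theorem, not a formality. The paper handles exactly this by passing to the subfamily $\Ac'$ of all supersets of return sets $\Nc_f(U,V)$, showing (using weak mixing and the dichotomy ``singleton or no isolated points'') that $\Ac'$ is a full family contained in $\Ac$, and invoking the Fu--Xing result for full families; your proposal contains no argument of this kind, so both (i) $\Rightarrow$ (ii) and (iii) $\Rightarrow$ (iv) remain unproved and the cycle does not close.
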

Along the proof of Theorem~\ref{The:A-tran} we repeatedly use that: \textit{any of the statements} (i)\textit{,} (ii) \textit{and} (iii) \textit{implies that both systems $(X,f)$ and $(\Kc(X),\com{f})$ are weakly-mixing}. Indeed, since for every Furstenberg family $\Ac \subset \Part(\NN_0)$ the notion of topological $\Ac$-transitivity implies that of usual transitivity, the previous assertion follows from the (independently obtained and well-known) main result of~\cite{Banks2005_CSF_chaos,LiaoWangZhang2006_SCSAM_transitivity,Peris2005_CSF_set-valued}.
\begin{proof}[Proof of Theorem~\ref{The:A-tran}]
	(i) $\Leftrightarrow$ (ii) $\Leftrightarrow$ (iii): This was proved in~\cite[Lemma~1 and Theorem~3]{FuXing2012_CSF_mixing} for the particular case in which $\Ac$ is a {\em full family} (i.e.\ when the intersection of each set from~$\Ac$ with each set from the dual family~$\Ac^*:=\{ B \subset \NN_0 \ ; \ A \cap B \neq \varnothing \text{ for all } A \in \Ac \}$ is an infinite set). Hence, to check the equivalence for arbitrary Furstenberg families it is enough to show the following fact:
	\begin{enumerate}[--]
		\item \textit{If any of the statements} (i)\textit{,} (ii) \textit{or} (iii) \textit{holds for the family $\Ac \subset \Part(\NN_0)$, then there exists a full family $\Ac' \subset \Part(\NN_0)$ fulfilling the inclusion $\Ac' \subset \Ac$ but also that} (i) \textit{holds for the family $\Ac'$}.
	\end{enumerate}
	To check this fact note that, since any of the statements (i), (ii) and (iii) implies that the system $(X,f)$ is weakly-mixing, then we have two possibilities as mentioned at the beginning of Subsection~\ref{SubSec_3.1:Transitivity}: either $X$ is a singleton (a fixed point) and in this case we can consider the family of co-finite sets $\Ac':=\Ac_{cf}$ and whose dual family is $\Ac_{\infty}$; or else $(X,d)$ has no isolated points (see \cite[Exercise~1.2.3]{GrossePe2011_book_linear}). In this last case, in which $(X,d)$ has no isolated points, we can consider the family
	\[
	\Ac' := \{ A \subset \NN_0 \ ; \ \Nc_f(U,V) \subset A \text{ for some pair of non-empty open subsets } U,V \subset X  \}.
	\]
	It is not hard to check that $\Ac' \subset \Ac$ but also that statement (i) holds for the family $\Ac'$ whenever any of (i), (ii) or (iii) holds for $\Ac$. In order to check that $\Ac'$ is a full family we argue by contradiction: if given any pair of non-empty open subsets $U,V \subset X$ we had that $\Nc_f(U,V) \cap B$ is a finite set for some element of the dual family $B \in (\Ac')^*$, then we could consider the integer
	\[
	m := \max\left( \Nc_f(U,V) \cap B \right) < \infty.
	\]
	Picking a point $x \in U$ and using the continuity of $f$ together with the fact that $X$ (and hence $V$) has no isolated points, we could find a positive value $\delta>0$ small enough to fulfill that the set
	\[
	W := V \setminus \left( \bigcup_{0\leq n\leq m} f^n(\Bc_d(x,\delta)) \right)
	\]
	has non-empty interior, but also that $\Bc_d(x,\delta) := \{ y \in X \ ; \ d(x,y)<\delta \} \subset U$. Setting $U':=\Bc_d(x,\delta)$ and denoting by $V'$ the interior of $W$, we would have two open subsets $U' \subset U$ and $V' \subset V$ for which
	\[
	\Nc_f(U',V') \subset \Nc_f(U,V) \cap \{m+1,m+2,...\}.
	\]
	This is a contradiction: we would have that $\Nc_f(U',V')$ belongs to $\Ac'$ by definition and that $B$ belongs to the dual family $(\Ac')^*$, but $\Nc_f(U',V') \cap B = \varnothing$, which contradicts the definition of dual family.
	
	(iii) $\Rightarrow$ (iv): This was proved in \cite[Theorem~2]{MartinezPeRo2021_MAT_chaos} for the particular case in which the family~$\Ac$ is a filter (see Remark~\ref{Rem:Q.1.2} below). We argue here the initial part of the proof for a (not necessarily filter) family $\Ac$: given arbitrary sets $u,v \in \Fc(X)$ and $\eps>0$ we must show that the return set
	\[
	A := \Nc_{\fuz{f}}\left( \Bc_{\infty}(u,\eps) , \Bc_{\infty}(v,\eps) \right)
	\]
	belongs to $\Ac$. By Lemma~\ref{Lem:eps.pisos} there exist numbers $0 = \alpha_0 < \alpha_1 < \alpha_2 < ... < \alpha_N = 1$ such that
	\[
	\max\left\{ d_H(u_{\alpha}, u_{\alpha_{j+1}}) , d_H(v_{\alpha}, v_{\alpha_{j+1}}) \right\} < \tfrac{\eps}{2} \quad \text{ for each } \alpha \in \ ]\alpha_j,\alpha_{j+1}] \text{ with } 0\leq j\leq N-1.
	\]
	Since $(\Kc(X),\com{f})$ is topologically $\Ac$-transitive and hence weakly-mixing, using now the already proved equivalence (i) $\Leftrightarrow$ (ii) of this Theorem~\ref{The:A-tran} we obtain that $(\Kc(X)^N,\com{f}_{(N)})$ is topologically $\Ac$-transitive for every $N \in \NN$. This last fact implies the following:
	\[
	B := \bigcap_{1\leq j\leq N} \Nc_{\com{f}}\left( \Bc_H(u_{\alpha_j},\tfrac{\eps}{2}) , \Bc_H(v_{\alpha_j},\tfrac{\eps}{2}) \right) \in \Ac.
	\]
	From here the proof follows as in \cite[Theorem~2]{MartinezPeRo2021_MAT_chaos} by showing that $B \subset A$. Indeed, given any arbitrary but fixed $n \in A$ one can find compact sets $K_j \in \Bc_H(u_{\alpha_j},\tfrac{\eps}{2})$ such that $\com{f}^n(K_j) \in \Bc_H(v_{\alpha_j},\tfrac{\eps}{2})$ for each $1\leq j\leq N$, and following the proof of \cite[Theorem~2]{MartinezPeRo2021_MAT_chaos} it can be checked that the fuzzy set $w := \max_{1\leq j\leq N} (\alpha_j \cdot K_j) \in \Fc(X)$ belongs to $\Bc_{\infty}(u,\eps)$ but also that $\fuz{f}^n(w) \in \Bc_{\infty}(v,\eps)$.
		
	(iv) $\Rightarrow$ (v) $\Rightarrow$ (vi) $\Rightarrow$ (vii): This trivially follows from the inclusions $\tau_{E} \subset \tau_{S} \subset \tau_{0} \subset \tau_{\infty}$.
	
	(vii) $\Rightarrow$ (iii): Given arbitrary sets $K,L \in \Kc(X)$ and $\eps>0$, we must show that the return set
	\[
	A := \Nc_{\com{f}}(\Bc_H(K,\eps),\Bc_H(L,\eps))
	\]
	belongs to $\Ac$. Without loss of generality we will assume that $0<\eps\leq\frac{1}{2}$. Now, since the dynamical system $(\Fc_{E}(X),\fuz{f})$ is assumed to be topologically $\Ac$-transitive we have that
	\[
	B := \Nc_{\fuz{f}}\left( \Bc_{E}(\chi_K,\eps) , \Bc_{E}(\chi_L,\eps) \right) \in \Ac.
	\]
	Thus, given any $n \in B$ there exists some $u \in \Bc_{E}(\chi_K,\eps)$ such that $\fuz{f}^n(u) \in \Bc_{E}(\chi_L,\eps)$. Letting now
	\[
	\delta := \max\left\{ d_{E}\left( \chi_K , u \right) , d_{E}( \chi_L , \fuz{f}^n(u) ) \right\} < \eps \leq \tfrac{1}{2},
	\]
	and picking any $\alpha \in \ ]\delta,1-\delta]$, an application of Lemma~\ref{Lem:key} shows that $d_H(K,u_{\alpha})<\eps$ and that
	\[
	d_H( L , \com{f}^n(u_{\alpha}) ) = d_H ( L , [\fuz{f}^n(u)]_{\alpha} ) < \eps.
	\]
	Hence $u_{\alpha} \in \Bc_H(K,\eps)$ and $\com{f}^n(u_{\alpha}) \in \Bc_H(L,\eps)$. We conclude that $n \in A$. The arbitrariness of $n \in B$ shows that $B \subset A$ and finally that $A \in \Ac$, as we had to show.
\end{proof}

\begin{remark}\label{Rem:Q.1.2}
	The reader should note that:
	\begin{enumerate}[(1)]
		\item A positive answer to both Questions~\ref{Q:main1} and \ref{Q:main2} follows now from Theorem~\ref{The:A-tran} if we consider the family~$\Ac_{\infty}$ formed by the infinite subsets of $\NN_0$. As a consequence we have that Theorem~\ref{The:A-tran} significantly improves \cite[Theorem~3]{JardonSanSan2020_MAT_transitivity} but also \cite[Proposition~7]{JardonSanSan2020_MAT_transitivity}. Alternatively:
		
		\item The fact that ``{\em if $(\Fc_{E}(X),\fuz{f})$ is transitive, then $(X,f)$ is weakly-mixing}'' can be directly proved using \cite[Proposition~1.53]{GrossePe2011_book_linear}, which states that: {\em a dynamical system $(X,f)$ is weakly-mixing if,~and only if, for any pair of non-empty open subsets $U,V \subset X$ we have that $\Nc_f(U,U) \cap \Nc_f(U,V) \neq \varnothing$}. Indeed, to check that $(X,f)$ is weakly-mixing let $x,y \in X$ and $\eps>0$ be arbitrary but fixed, and let us prove that~$\Nc_f(\Bc_d(x,\eps),\Bc_d(x,\eps)) \cap \Nc_f(\Bc_d(x,\eps),\Bc_d(y,\eps)) \neq \varnothing$, where $\Bc_d(x,\eps)$ and $\Bc_d(y,\eps)$ are the open $d$-balls of radius $\eps$ centered at $x$ and $y$ respectively. Assuming that $(\Fc_{E}(X),\fuz{f})$ is transitive there are $u \in \Bc_{E}(\chi_{\{x\}},\eps)$ and $n \in \NN$ such that $\fuz{f}^n(u) \in \Bc_{E}(\chi_{\{x,y\}},\eps)$. Finally, if $\eps$ is small enough one can apply Lemma~\ref{Lem:key} exactly as in Theorem~\ref{The:A-tran} to obtain the existence of points $z_1, z_2 \in \Bc_d(x,\eps)$ fulfilling that $f^n(z_1) \in \Bc_d(x,\eps)$ and $f^n(z_2) \in \Bc_d(y,\eps)$, which completes the proof.
		
		\item In statement (i) of Theorem~\ref{The:A-tran} we can not drop the assumption ``{\em $(X,f)$ is weakly-mixing}'' unless the family $\Ac$ is a {\em filter}, i.e.\ unless we have the condition $A \cap B \in \Ac$ for every $A,B \in \Ac$. Indeed, the following example is well-known in Topological Dynamics:
		\begin{enumerate}[--]
			\item Let $\alpha \in \RR\setminus\QQ$. The system $(\TT,f_{\alpha})$, defined on $\TT:=\{ z \in \CC \ ; \ |z|=1 \}$ as $f_{\alpha}(z):=ze^{i\pi\alpha}$, is transitive and even {\em $\Ac_{syn}$-transitive} for the family $\Ac_{syn}$ formed by the {\em syndetic sets} (i.e.\ the sets $A \subset \NN_0$ admitting some $\ell \in \NN$ such that $A \cap [\![ n , n+\ell ]\!] \neq \varnothing$ for all $n \in \NN_0$). However, it is not hard to check that $(\TT,f_{\alpha})$ is not weakly-mixing (see \cite[Example~1.43]{GrossePe2011_book_linear}).
		\end{enumerate}
		This example directly shows, although it is not hard to check, that the family $\Ac_{syn}$ is not a filter.
		
		Let us also mention that once a system $(X,f)$ is weakly-mixing and $\Ac$-transitive then we have that the respective family~$\Ac'$, considered in the proof of Theorem~\ref{The:A-tran} and defined as
		\[
		\Ac' := \{ A \subset \NN_0 \ ; \ \Nc_f(U,V) \subset A \text{ for some pair of non-empty open subsets } U,V \subset X  \},
		\]
		is a subfamily of $\Ac$ fulfilling that $\Ac'$ is a filter (such a filter condition follows from \cite[Lemma~1.50]{GrossePe2011_book_linear} applied to the weakly-mixing property). Hence, if a dynamical system $(X,f)$ is weakly-mixing and $\Ac$-transitive, then the system $(X,f)$ is indeed $\Ac'$-transitive for the filter $\Ac'$. This reasoning shows that Theorem~\ref{The:A-tran} can only be considered as a slight improvement of~\cite[Theorem~2]{MartinezPeRo2021_MAT_chaos}, which statement requires the filter condition of the Furstenberg family under consideration.
	\end{enumerate}
\end{remark}

\subsection{Recurrence-kind properties}

Following \cite{AlvarezLoPe2025_FSS_recurrence}, and having already introduced the concept of Furstenberg family, let us recall the recurrence-kind properties that we consider in this paper. A dynamical system $(X,f)$ is called:
\begin{enumerate}[--]
	\item {\em topologically recurrent} (or just {\em recurrent}) if given any non-empty open subset $U \subset X$ there exists some (and hence infinitely many) $n \in \NN$ such that $f^n(U) \cap U \neq \varnothing$;
	
	\item {\em topologically multiply recurrent} (or just {\em multiply recurrent}) if given any positive integer $\ell \in \NN$ and any non-empty open subset $U \subset X$ there exists some (and hence infinitely many) $n \in \NN$ such that
	\[
	\bigcap_{0\leq j\leq \ell} f^{-jn}(U) = U \cap f^{-n}(U) \cap f^{-2n}(U) \cap \cdots \cap f^{-\ell n}(U) \neq \varnothing;
	\]
	
	\item {\em topologically $\Ac$-recurrent} (or just {\em $\Ac$-recurrent}), for a Furstenberg family $\Ac \subset \Part(\NN_0)$, if given any non-empty open subset $U \subset X$ we have that the return set $\Nc_f(U,U) = \{ n \in \NN_0 \ ; \ f^n(U) \cap U \neq \varnothing \}$, which from now on will be simply denoted by $\Nc_f(U) := \Nc_f(U,U)$, belongs to the family $\Ac$.
\end{enumerate}
These properties have recently been studied (see for instance \cite{CostakisMaPa2014_CAOT_recurrent,KwietniakLiOYe2017_SCM_multi-recurrence} and \cite{AbakumovA2026_CJM_on}). Note that, even though the notion of {\em topological recurrence} is implied by that of {\em multiple recurrence} and also by that of {\em topological $\Ac$-recurrence} for any family $\Ac$, there is no clear relation between {\em multiple recurrence} and {\em topological $\Ac$-recurrence} as argued in \cite[Remark~2.1]{AlvarezLoPe2025_FSS_recurrence}. For that reason, and in order to give a unified treatment to the previous properties, we will use the following notion recently introduced in~\cite{AlvarezLoPe2025_FSS_recurrence}:
\begin{enumerate}[--]
	\item a dynamical system $(X,f)$ is called {\em topologically $(\ell,\Ac)$-recurrent} (or just {\em $(\ell,\Ac)$-recurrent}), for a positive integer $\ell \in \NN$ and a Furstenberg family $\Ac \subset \Part(\NN)$, if given any non-empty open subset $U \subset X$ we have that the {\em $\ell$-return set from $U$ to itself}, which will be denoted by 
	\[
	\Nc_f^{\ell}(U) := \left\{ n \in \NN_0 \ ; \ \bigcap_{0\leq j\leq \ell} f^{-jn}(U) \neq \varnothing \right\},
	\]
	belongs to the family $\Ac$.
\end{enumerate}
Our next results improves \cite[Theorem~4.1]{AlvarezLoPe2025_FSS_recurrence} by adding the sendograph and endograph equivalences:

\begin{theorem}\label{The:(l,A)-rec}
	Let $\ell \in \NN$ be a positive integer and let $\Ac \subset \Part(\NN_0)$ be a Furstenberg family. Then, given a continuous map $f:X\longrightarrow X$ on a metric space $(X,d)$, the following statements are equivalent:
	\begin{enumerate}[{\em(i)}]		
		\item $(X^N,f_{(N)})$ is topologically $(\ell,\Ac)$-recurrent for every $N \in \NN$;
		
		\item $(\Kc(X),\com{f})$ is topologically $(\ell,\Ac)$-recurrent;
		
		\item $(\Fc_{\infty}(X),\fuz{f})$ is topologically $(\ell,\Ac)$-recurrent;
		
		\item $(\Fc_{0}(X),\fuz{f})$ is topologically $(\ell,\Ac)$-recurrent;
		
		\item $(\Fc_{S}(X),\fuz{f})$ is topologically $(\ell,\Ac)$-recurrent;
		
		\item $(\Fc_{E}(X),\fuz{f})$ is topologically $(\ell,\Ac)$-recurrent.
	\end{enumerate}
	As a consequence, these equivalences hold if we replace ``topological $(\ell,\Ac)$-recurrence'' with any of the particular properties ``topological recurrence'', ``multiple recurrence'' or ``topological $\Ac$-recurrence''.
\end{theorem}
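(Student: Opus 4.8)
The plan is to follow exactly the three-step template described in the excerpt, now specialized to the property $\Pc = $ ``topological $(\ell,\Ac)$-recurrence'', and to prove the chain of implications (i) $\Rightarrow$ (ii) $\Rightarrow$ (iii) $\Rightarrow$ (iv) $\Rightarrow$ (v) $\Rightarrow$ (vi) $\Rightarrow$ (i). The middle links (iii) $\Rightarrow$ (iv) $\Rightarrow$ (v) $\Rightarrow$ (vi) are free: they are immediate from the inclusion of topologies $\tau_{E} \subset \tau_{S} \subset \tau_{0} \subset \tau_{\infty}$, since each property is a condition that every non-empty open subset must satisfy, and a coarser topology has fewer open sets. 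The equivalences (i) $\Leftrightarrow$ (ii) $\Leftrightarrow$ (iii) are precisely the content of \cite[Theorem~4.1]{AlvarezLoPe2025_FSS_recurrence}, so I would simply cite them; the genuinely new contribution is the closing implication (vi) $\Rightarrow$ (ii), i.e.\ statement \eqref{eq:d_E->d_H} for this $\Pc$, and this is where Lemma~\ref{Lem:key} does the work.

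First I would record the trivial implications and the cited block. Then, for (vi) $\Rightarrow$ (ii): fix $K \in \Kc(X)$ and $\eps > 0$, assume without loss of generality that $0 < \eps \leq \tfrac12$, and set $U := \Bc_H(K,\eps) \subset \Kc(X)$; I must show $\Nc_{\com{f}}^{\ell}(U) \in \Ac$. Since $(\Fc_{E}(X),\fuz{f})$ is $(\ell,\Ac)$-recurrent, the set $B := \Nc_{\fuz{f}}^{\ell}\big(\Bc_{E}(\chi_K,\eps)\big)$ belongs to $\Ac$, and because $\Ac$ is hereditarily upward it suffices to prove $B \subset \Nc_{\com{f}}^{\ell}(U)$. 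So take $n \in B$: there is $u \in \Bc_{E}(\chi_K,\eps)$ with $\fuz{f}^{jn}(u) \in \Bc_{E}(\chi_K,\eps)$ for every $0 \leq j \leq \ell$. Put
\[
\delta := \max_{0 \leq j \leq \ell} d_{E}\big(\chi_K , \fuz{f}^{jn}(u)\big) < \eps \leq \tfrac12,
\]
and fix any level $\alpha \in\ ]\delta, 1-\delta]$, which is non-empty since $\delta < \tfrac12$. Applying Lemma~\ref{Lem:key} to each pair $(\chi_K, \fuz{f}^{jn}(u))$ — whose endograph distance is $\leq \delta < \tfrac12$ — gives $d_H\big(K, [\fuz{f}^{jn}(u)]_{\alpha}\big) \leq \delta < \eps$, and since $[\fuz{f}^{jn}(u)]_{\alpha} = \com{f}^{jn}(u_{\alpha})$ by part~(a) of the Zadeh-extension proposition, this says $\com{f}^{jn}(u_{\alpha}) \in U$ for every $0 \leq j \leq \ell$. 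Equivalently $u_{\alpha} \in \com{f}^{-jn}(U)$ for all such $j$, so $\bigcap_{0 \leq j \leq \ell} \com{f}^{-jn}(U) \ni u_{\alpha} \neq \varnothing$, i.e.\ $n \in \Nc_{\com{f}}^{\ell}(U)$. This closes the cycle.

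The only subtlety — and the step I would flag as the place where one must be slightly careful — is checking that a single level $\alpha$ works simultaneously for all $\ell+1$ iterates: this is exactly why I take $\delta$ to be the \emph{maximum} of the $\ell+1$ endograph distances rather than handling each $j$ separately, so that the common range $]\delta,1-\delta]$ of admissible levels is non-empty and Lemma~\ref{Lem:key} is applicable to every pair at once. Everything else is routine bookkeeping with open balls and the dictionary $[\fuz{f}^{m}(u)]_{\alpha} = \com{f}^{m}(u_{\alpha})$. Finally, the ``As a consequence'' sentence needs no separate argument: topological recurrence is $(\,1,\Ac_{\infty})$-recurrence, topological $\Ac$-recurrence is $(\,1,\Ac)$-recurrence, and multiple recurrence is the conjunction over all $\ell \in \NN$ of $(\ell,\Ac_{\infty})$-recurrence — or, more directly, each of these is literally an instance (or an intersection of instances) of the $(\ell,\Ac)$-recurrence property, so the stated equivalences transfer verbatim, exactly as in \cite[Corollary~4.1]{AlvarezLoPe2025_FSS_recurrence}.
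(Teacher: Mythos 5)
Your proposal is correct and takes essentially the same route as the paper: the cited equivalences together with the topology inclusions dispose of (i)--(vi) in the downward direction, and your closing implication (vi) $\Rightarrow$ (ii) --- taking $\delta$ as the maximum of the $\ell+1$ endograph distances so that one common level $\alpha \in\ ]\delta,1-\delta]$ works for all iterates, then applying Lemma~\ref{Lem:key} and the identity $[\fuz{f}^{jn}(u)]_{\alpha}=\com{f}^{jn}(u_{\alpha})$ --- is exactly the paper's argument. The only cosmetic difference is that you cite just (i) $\Leftrightarrow$ (ii) $\Leftrightarrow$ (iii) and recover (iii) $\Rightarrow$ (iv) from $\tau_{0}\subset\tau_{\infty}$, whereas the paper cites (i)--(iv) as a block from \cite[Theorem~4.1]{AlvarezLoPe2025_FSS_recurrence}; this changes nothing.
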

Note that the last sentence of Theorem~\ref{The:(l,A)-rec} follows because: {\em topological recurrence} coincides with {\em topological $(1,\Ac_{\infty})$-recurrence}; {\em multiple recurrence} coincides with being {\em topologically $(\ell,\Ac_{\infty})$-recurrent} for every $\ell \in \NN$; and {\em topological $\Ac$-recurrence} coincides with {\em topological $(1,\Ac)$-recurrence}.
\begin{proof}[Proof of Theorem~\ref{The:(l,A)-rec}]
	(i) $\Leftrightarrow$ (ii) $\Leftrightarrow$ (iii) $\Leftrightarrow$ (iv): This was proved in~\cite[Theorem~4.1]{AlvarezLoPe2025_FSS_recurrence}.
	
	(iv) $\Rightarrow$ (v) $\Rightarrow$ (vi): This trivially follows from the inclusions $\tau_{E} \subset \tau_{S} \subset \tau_{0}$.
	
	(vi) $\Rightarrow$ (ii): Given any set $K \in \Kc(X)$ and any $\eps>0$ we must show that the return set
	\[
	A := \Nc_{\com{f}}^{\ell}( \Bc_H(K,\eps) )
	\]
	belongs to $\Ac$. Without loss of generality we will assume that $0<\eps\leq\frac{1}{2}$. Since $(\Fc_{E}(X),\fuz{f})$ is assumed to be topologically $(\ell,\Ac)$-recurrent we have that
	\[
	B := \Nc_{\fuz{f}}^{\ell}\left( \Bc_{E}(\chi_K,\eps) \right) \in \Ac.
	\]
	Thus, given any arbitrary but fixed $n \in B$ there exists $u \in \Fc(X)$ fulfilling that
	\[
	u \in \bigcap_{0\leq j\leq \ell} \fuz{f}^{-jn}\left( \Bc_{E}(\chi_K,\eps) \right).
	\]
	Letting now
	\[
	\delta := \max\left\{ d_{E}( \chi_K , \fuz{f}^{jn}(u) ) \ ; \ 0\leq j\leq \ell \right\} < \eps \leq \tfrac{1}{2},
	\]
	and picking any $\alpha \in \ ]\delta,1-\delta]$, an application of Lemma~\ref{Lem:key} shows that
	\[
	d_H( K , \com{f}^{jn}(u_{\alpha}) ) = d_H( K , [\fuz{f}^{jn}(u)]_{\alpha} ) < \eps \quad \text{ for all } 0\leq j\leq \ell.
	\]
	Hence we have that
	\[
	u_{\alpha} \in \bigcap_{0\leq j\leq \ell} \com{f}^{-jn}( \Bc_H(K,\eps) ). 
	\]
	We conclude that $n \in A$. The arbitrariness of $n \in B$ shows that $B \subset A$ and finally that $A \in \Ac$.
\end{proof}

\begin{remark}
	As it happens in Theorem~\ref{The:A-tran} and we mention in part (3) of Remark~\ref{Rem:Q.1.2}, if the Furstenberg family~$\Ac$ considered in the statement of Theorem~\ref{The:(l,A)-rec} is not a filter then we cannot weaken the assumption ``{\em $(X^N,f_{(N)})$ is topologically $(\ell,\Ac)$-recurrent for every $N \in \NN$}''. Indeed, it has recently been showed in~\cite[Theorem~3.2]{GrivauxLoPe2025_AMP_questions-I} that given any $N \in \NN$ there exists a dynamical system $(X,f)$ fulfilling that $(X^N,f_{(N)})$ is topologically recurrent (and even multiply recurrent), but also fulfilling that the system $(X^{N+1},f_{(N+1)})$ is not topologically recurrent (neither multiply recurrent).
\end{remark}

\subsection{The case of complete metric spaces}

Following~\cite{AlvarezLoPe2025_FSS_recurrence,JardonSanSan2020_MAT_transitivity}, if given a system $(X,f)$ we assume that $(X,d)$ is a {\em complete} metric space, then we can improve Theorems~\ref{The:A-tran} and \ref{The:(l,A)-rec} in terms of {\em point-transitivity} and {\em point-recurrence} by using some concrete {\em Baire category arguments}. In this subsection we also consider the generalizations of these properties in terms of Furstenberg families, namely {\em point-$\Ac$-transitivity} and {\em point-$\Ac$-recurrence}, obtaining some completely new results (see Lemma~\ref{Lem:point-A-rec/tran}, Theorem~\ref{The:point-A-tran} and Corollary~\ref{Cor:point-BD-tran} below).

Let us start by recalling the basic definitions used in \cite{AlvarezLoPe2025_FSS_recurrence,JardonSanSan2020_MAT_transitivity}: given a dynamical system $(X,f)$ and a point $x \in X$ we will denote its {\em $f$-orbit} by
\[
\orb_f(x) := \{ f^n(x) \ ; \ n \in \NN_0 \},
\]
and given any subset $U \subset X$ we will denote the {\em return set from $x$ to $U$} by
\[
\Nc_f(x,U) := \{ n \in \NN_0 \ ; \ f^n(x) \in U \}.
\]
Following \cite{AlvarezLoPe2025_FSS_recurrence} and \cite{JardonSanSan2020_MAT_transitivity} we will say that a point $x \in X$ is:
\begin{enumerate}[--]
	\item {\em transitive for $f$} if the $f$-orbit of $x$ is dense in the space $X$, i.e.\ $X = \cl{\orb_f(x)}$; equivalently, if for every non-empty open subset $U \subset X$ the return set $\Nc_f(x,U)$ is non-empty. We will denote by $\Tran(f)$ the {\em set of transitive points for $f$}, and $(X,f)$ is called {\em point-transitive} if the set $\Tran(f)$ is non-empty.
	
	\item {\em recurrent for $f$} if $x$ belongs to the closure of its forward orbit, i.e.\ $x \in \cl{\orb_f(f(x))}$; equivalently, if for every neighbourhood $U$ of $x$ the return set $\Nc_f(x,U)$ is an infinite set. We will denote by~$\Rec(f)$ the {\em set of recurrent points for $f$}, and $(X,f)$ is called {\em point-recurrent} if the set $\Rec(f)$ is dense in $X$.
	
	\item {\em $\AP$-recurrent for $f$} if for every neighbourhood $U$ of $x$ the return set $\Nc_f(x,U)$ contains arbitrarily long arithmetic progressions, i.e.\ if for each positive integer $\ell \in \NN$ there exist two numbers $n \in \NN$ and $n_0 \in \NN_0$ such that $\{ n_0+jn \ ; \ 0\leq j\leq \ell \} \subset \Nc_f(x,U)$. We will denote by $\AP\Rec(f)$ the {\em set of $\AP$-recurrent points for $f$}, and $(X,f)$ is called {\em point-$\AP$-recurrent} if $\AP\Rec(f)$ is dense in $X$.
\end{enumerate}

Let us recall: firstly, that every {\em point-recurrent} dynamical system is {\em topologically recurrent}, that {\em point-$\AP$-recurrence} implies {\em multiply recurrence}, and that the converse statements hold for completely metrizable spaces (see \cite[Proposition~2.1]{CostakisMaPa2014_CAOT_recurrent} and \cite[Lemma~4.8]{KwietniakLiOYe2017_SCM_multi-recurrence}); and secondly, that {\em point-transitivity} implies {\em topological transitivity} on metric spaces without isolated points, and that the converse holds on separable completely metrizable spaces (see \cite[Theorem~1.16]{GrossePe2011_book_linear}). Moreover, one can show with not too much difficulty that for any metric space $(X,d)$, the associated fuzzy metric space $\Fc_E(X)$ is either a singleton (precisely when $X$ is a singleton) or has no isolated points, regardless of whether $(X,d)$ itself has isolated points (a proof of this fact will appear in \cite{Lopez2026_JIA_topological-II}).

We are now ready to improve both \cite[Corollary~5.1]{AlvarezLoPe2025_FSS_recurrence} and \cite[Theorem~4]{JardonSanSan2020_MAT_transitivity}:

\begin{corollary}\label{Cor:point-rec/tran}
	Let $f:X\longrightarrow X$ be a continuous map on a complete metric space $(X,d)$. Hence:
	\begin{enumerate}[{\em(a)}]
		\item The following statements are equivalent:
		\begin{enumerate}[{\em(i)}]
			\item $(X^N,f_{(N)})$ is point-recurrent (resp.\ point-$\AP$-recurrent) for every $N \in \NN$;
			
			\item $(\Kc(X),\com{f})$ is point-recurrent (resp.\ point-$\AP$-recurrent);
			
			\item $(\Fc_{\infty}(X),\fuz{f})$ is point-recurrent (resp.\ point-$\AP$-recurrent);
			
			\item $(\Fc_{0}(X),\fuz{f})$ is point-recurrent (resp.\ point-$\AP$-recurrent);
			
			\item $(\Fc_{S}(X),\fuz{f})$ is point-recurrent (resp.\ point-$\AP$-recurrent);
			
			\item $(\Fc_{E}(X),\fuz{f})$ is point-recurrent (resp.\ point-$\AP$-recurrent).
		\end{enumerate}
		
		\item If $(X,d)$ is also separable, then the following statements are equivalent:
		\begin{enumerate}[{\em(i)}]
			\item $(X^2,f_{(2)})$ is point-transitive;
			
			\item $(\Kc(X),\com{f})$ is point-transitive;
			
			\item $(\Fc_{\infty}(X),\fuz{f})$ is topologically transitive;
			
			\item $(\Fc_{0}(X),\fuz{f})$ is point-transitive;
			
			\item $(\Fc_{S}(X),\fuz{f})$ is point-transitive;
			
			\item $(\Fc_{E}(X),\fuz{f})$ is point-transitive.
		\end{enumerate}
	\end{enumerate}
\end{corollary}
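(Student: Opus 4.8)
The plan is to reduce Corollary~\ref{Cor:point-rec/tran} to Theorems~\ref{The:(l,A)-rec} and~\ref{The:A-tran} by trading each ``point-wise'' property for its ``topological'' counterpart on the relevant hyperspace, using the Baire-category equivalences recalled just before the statement. First I would record the purely topological ingredients about the hyperspaces: if $(X,d)$ is complete, then so are $X^N$ $(N\in\NN)$, $\Kc(X)$, $\Fc_\infty(X)$, $\Fc_0(X)$, $\Fc_S(X)$ and $\Fc_E(X)$; if in addition $(X,d)$ is separable, then so are $X^N$, $\Kc(X)$, $\Fc_0(X)$, $\Fc_S(X)$ and $\Fc_E(X)$, whereas $\Fc_\infty(X)$ need not be (it is non-separable once $X$ has two points); and if $X$ has no isolated points, then neither do $X^N$, $\Kc(X)$ (see \cite{IllanesNad1999_book_hyperspaces}), $\Fc_0(X)$ or $\Fc_S(X)$, while $\Fc_E(X)$ has no isolated points whenever $X$ is not a singleton, regardless of whether $X$ is perfect (see \cite{Lopez2025_arXiv_topological-II}). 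I would cite all of these rather than reprove them.

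For part~(a) fix $\ell\in\NN$. By \cite[Proposition~2.1]{CostakisMaPa2014_CAOT_recurrent} and \cite[Lemma~4.8]{KwietniakLiOYe2017_SCM_multi-recurrence}, on a complete metric space a system is point-recurrent (respectively point-$\AP$-recurrent) exactly when it is topologically recurrent (respectively multiply recurrent); here only completeness, not separability, is needed, so $\Fc_\infty(X)$ poses no difficulty. Applying this to each of $X^N$ $(N\in\NN)$, $\Kc(X)$, $\Fc_\infty(X)$, $\Fc_0(X)$, $\Fc_S(X)$ and $\Fc_E(X)$, the point-recurrence form of~(a) becomes precisely the ``topological recurrence'' case of the last sentence of Theorem~\ref{The:(l,A)-rec}. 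For point-$\AP$-recurrence the reduction is instead to ``multiple recurrence'', i.e.\ to $(\ell,\Ac_\infty)$-recurrence for every $\ell\in\NN$, so I would apply Theorem~\ref{The:(l,A)-rec} once for each $\ell$ and intersect the resulting equivalences over $\ell$. This completes~(a).

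For part~(b) I would split into three cases. If $X$ is a singleton, all six statements hold trivially. If $X$ has an isolated point and at least two points, then all six statements fail: ``$(X,f)$ is weakly-mixing'' fails because weak-mixing forces $X$ to be perfect or a singleton (see \cite[Exercise~1.2.3]{GrossePe2011_book_linear}), whence $(\Fc_\infty(X),\fuz{f})$ is not topologically transitive by Theorem~\ref{The:A-tran} and $(\Fc_E(X),\fuz{f})$ is not topologically transitive either; and point-transitivity of $(\Kc(X),\com{f})$, of $(\Fc_0(X),\fuz{f})$, of $(\Fc_S(X),\fuz{f})$ and of $(X^2,f_{(2)})$ is impossible by an orbit-trapping argument: the isolated point $\{p\}$ (respectively $\chi_{\{p\}}$, respectively $(p,p)$) must lie on any dense orbit, so such an orbit would eventually stay inside the set of one-point sets (respectively of characteristic functions of one-point sets, respectively of points of the diagonal of $X^2$), which is uniformly bounded away, in the relevant metric, from any two-point set and hence is not dense. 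The main case is $X$ perfect: then $X^2$, $\Kc(X)$, $\Fc_\infty(X)$, $\Fc_0(X)$, $\Fc_S(X)$ and $\Fc_E(X)$ are all completely metrizable, all have no isolated points, and all are separable except possibly $\Fc_\infty(X)$. Now ``$(X,f)$ is weakly-mixing'' means by definition ``$(X^2,f_{(2)})$ is topologically transitive'', and by Theorem~\ref{The:A-tran} with $\Ac=\Ac_\infty$ this is equivalent to topological transitivity of each of $(\Kc(X),\com{f})$, $(\Fc_\infty(X),\fuz{f})$, $(\Fc_0(X),\fuz{f})$, $(\Fc_S(X),\fuz{f})$ and $(\Fc_E(X),\fuz{f})$, which already yields statement~(iii). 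Finally, for each of $X^2$, $\Kc(X)$, $\Fc_0(X)$, $\Fc_S(X)$ and $\Fc_E(X)$, being separable, completely metrizable and without isolated points, \cite[Theorem~1.16]{GrossePe2011_book_linear} makes topological transitivity equivalent to point-transitivity; so the previous chain becomes the equivalence of statements (i), (ii), (iv), (v) and (vi), which closes the cycle.

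The bulk of the work, and the main obstacle, lies in part~(b): assembling the topological facts about the hyperspaces (completeness, separability with $\Fc_\infty(X)$ the sole exception --- which is precisely why (b)(iii) must read ``topologically transitive'' and not ``point-transitive'' --- and above all the ``no isolated points'' statements), and making the orbit-trapping argument precise for the Skorokhod, sendograph and Hausdorff metrics in the non-perfect case. Part~(a), by contrast, is a formal consequence of Theorem~\ref{The:(l,A)-rec} together with the quoted completely-metrizable Baire equivalences, with no separability or isolated-point subtleties.
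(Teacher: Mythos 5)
There is a genuine gap: your reduction trades each point-wise statement for its topological counterpart \emph{on the corresponding hyperspace}, and the direction ``topological $\Rightarrow$ point-wise'' (via \cite[Proposition~2.1]{CostakisMaPa2014_CAOT_recurrent}, \cite[Lemma~4.8]{KwietniakLiOYe2017_SCM_multi-recurrence} and, in part (b), the Birkhoff transitivity theorem \cite[Theorem~1.16]{GrossePe2011_book_linear}) requires the spaces $\Fc_{\infty}(X)$, $\Fc_{0}(X)$, $\Fc_{S}(X)$ and $\Fc_{E}(X)$ to be completely metrizable (and, for (b), separable). You simply assert that completeness of $(X,d)$ passes to all these fuzzy metrics, but this is nowhere established in the paper and it is actually \emph{false} for the endograph metric: take $X=\RR$ and $v_n:=\max\bigl(\chi_{\{0\}},\max_{1\leq k\leq n}2^{-k}\chi_{\{k\}}\bigr)\in\Fc(\RR)$. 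Since every added piece $\{k\}\times[0,2^{-k}]$ of the endograph lies within $2^{-k}$ of $\RR\times\{0\}$, the sequence $(v_n)_n$ is $d_E$-Cauchy; but if $d_E(v_n,u)\to0$ for some $u\in\Fc(\RR)$ then $\eend(u)$ would coincide with the closed set $\lim_n\eend(v_n)$, forcing $u(k)=2^{-k}$ for all $k\in\NN$, whose support is unbounded, a contradiction. So $\Fc_E(\RR)$ is not $d_E$-complete, and complete metrizability of $\tau_E$ (likewise of $\tau_S$, $\tau_0$, $\tau_\infty$ in general metric spaces) would need a separate proof that your argument does not supply. Consequently the implications \emph{into} statements (iii)--(vi) of part (a), and into (iv)--(vi) of part (b), are not justified as written.

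The paper's proof is arranged precisely so that no Baire-type argument is ever run on a fuzzy hyperspace. It cites the known equivalences (i)$\Leftrightarrow$(ii)$\Leftrightarrow$(iii)$\Leftrightarrow$(iv) from \cite{AlvarezLoPe2025_FSS_recurrence} and \cite{JardonSanSan2020_MAT_transitivity}; it gets (iv)$\Rightarrow$(v)$\Rightarrow$(vi) for free because a dense set of recurrent points, or a dense orbit, for $\tau_0$ remains such for the coarser topologies $\tau_S\supset$ $\tau_E$; and for (vi)$\Rightarrow$(i) it only uses the \emph{trivial} direction (point-wise $\Rightarrow$ topological) on $\Fc_E(X)$ --- where for transitivity one just needs that $\Fc_E(X)$ is a singleton or has no isolated points --- then transfers the topological property down to every $(X^N,f_{(N)})$ via Theorems~\ref{The:(l,A)-rec} and \ref{The:A-tran} with $\Ac=\Ac_\infty$, and finally applies the completeness (and, in (b), separability) hypotheses \emph{only on $X^N$}, where they are inherited from $X$. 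If you reorganize your argument along these lines (in particular replacing the Birkhoff/Baire steps on $\Fc_0$, $\Fc_S$, $\Fc_E$ by the coarsening observation and by the transfer to $X^N$), the completeness issue disappears; your separate case analysis in (b) for spaces with isolated points also becomes unnecessary.
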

\begin{proof}
	(i) $\Leftrightarrow$ (ii) $\Leftrightarrow$ (iii) $\Leftrightarrow$ (iv): Part (a) was proved in \cite[Corollary~5.1]{AlvarezLoPe2025_FSS_recurrence} and (b) in \cite[Theorem~4]{JardonSanSan2020_MAT_transitivity}.
	
	(iv) $\Rightarrow$ (v) $\Rightarrow$ (vi): This trivially follows from the inclusions $\tau_{E} \subset \tau_{S} \subset \tau_{0}$.

	(vi) $\Rightarrow$ (i): If $(\Fc_{E}(X),\fuz{f})$ is point-recurrent, point-$\AP$-recurrent or point-transitive respectively, then $(\Fc_{E}(X),\fuz{f})$ is topologically recurrent, multiply recurrent or topologically transitive respectively (for topological transitivity we used that $\Fc_{E}(X)$ is either a singleton or has no isolated points). Thus, Theorem~\ref{The:(l,A)-rec} in case (a) and Theorem~\ref{The:A-tran} in case (b), for the family~$\Ac_{\infty}$ formed by the infinite subsets of $\NN_0$, imply that $(X^N,f_{(N)})$ is topologically recurrent, multiply recurrent or topologically transitive respectively, for every $N \in \NN$. Statement (i) in part (a) follows now from the completeness assumption on~$(X,d)$ by using \cite[Proposition~2.1]{CostakisMaPa2014_CAOT_recurrent} or~\cite[Lemma~4.8]{KwietniakLiOYe2017_SCM_multi-recurrence} respectively. Statement (i) in part (b) follows from the separability and completeness assumptions on $(X,d)$ by using \cite[Theorem~1.16]{GrossePe2011_book_linear}.
\end{proof}

In view of Corollary~\ref{Cor:point-rec/tran} one may wonder if the notions of {\em point-transitivity} and {\em point-recurrence} can also be studied from the Furstenberg families point of view. Indeed, following \cite{BesMePePu2016_MA_recurrence}, given a Furstenberg family $\Ac \subset \Part(\NN_0)$ and a dynamical system $(X,f)$ we will say that a point $x \in X$ is:
\begin{enumerate}[--]
	\item {\em $\Ac$-transitive for $f$} if $\Nc_f(x,U) \in \Ac$ for every non-empty open subset $U \subset X$. We denote by~$\Ac\Tran(f)$ the {\em set of $\Ac$-transitive points for $f$}, and $(X,f)$ is called {\em point-$\Ac$-transitive} if $\Ac\Tran(f)$ is non-empty.
		
	\item {\em $\Ac$-recurrent for $f$} if $\Nc_f(x,U) \in \Ac$ for every neighbourhood $U$ of $x$. We denote by~$\Ac\Rec(f)$ the {\em set of $\Ac$-recurrent points for $f$}, and $(X,f)$ is called {\em point-$\Ac$-recurrent} if $\Ac\Rec(f)$ is dense in $X$.
\end{enumerate}
To finish the section we aim to derive a ``point-wise version'' of Theorem~\ref{The:A-tran}. This kind of question seems to be completely new in the framework of ``{\em comparing the dynamical properties of $(X,f)$ with those of $(\Kc(X),\com{f})$ and $(\Fc(X),\fuz{f})$}''. The main difficulty here will be that for a general Furstenberg family~$\Ac\subset\Part(\NN_0)$ there is no Baire category argument to obtain an $\Ac$-transitive point, even under the assumption of very strong topological transitivity properties such as topological mixing. To avoid this issue we will consider a particular class of families (see Theorem~\ref{The:point-A-tran} below). Let us first prove the following general lemma, whose statement (b) significantly improves \cite[Proposition~9]{JardonSanSan2020_MAT_transitivity}:

\begin{lemma}\label{Lem:point-A-rec/tran}
	Let $\Ac \subset \Part(\NN_0)$ be a Furstenberg family. Then, given a continuous map $f:X\longrightarrow X$ on a (not necessarily separable, neither complete) metric space $(X,d)$, the following statements hold:
	\begin{enumerate}[{\em(a)}]
		\item If $(\Kc(X),\com{f})$ is point-$\Ac$-transitive, then $(X,f)$ is point-$\Ac$-transitive.
		
		\item If $(\Fc_{E}(X),\fuz{f})$ is point-$\Ac$-transitive, then $(X,f)$ is point-$\Ac$-transitive.
		
		\item If $(X,f)$ is point-$\Ac$-transitive, then $(X^N,f_{(N)})$ is point-$\Ac$-recurrent for every $N \in \NN$.
		
		\item If $(X^N,f_{(N)})$ is point-$\Ac$-recurrent for every $N \in \NN$, then $(\Kc(X),\com{f})$ is point-$\Ac$-recurrent.
		
		\item If $(X^N,f_{(N)})$ is point-$\Ac$-recurrent for every $N \in \NN$, then $(\Fc_{\infty}(X),\fuz{f})$ is point-$\Ac$-recurrent.
	\end{enumerate}
\end{lemma}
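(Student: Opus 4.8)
The plan is to prove the five implications separately, the common engine being that $\Ac$ is hereditarily upward: it then suffices, in each case, to trap the return set in question between one that is already known to lie in $\Ac$ and the one to be controlled.

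For (a) I would fix $K\in\Ac\Tran(\com{f})$ and pick any $x_0\in K$; given a non-empty open $U\subset X$, the set $\mathcal U:=\{L\in\Kc(X)\ ;\ L\subset U\}$ is non-empty and $d_H$-open (it is all of $\Kc(X)$ if $U=X$, and otherwise $\Bc_H\bigl(L,\min_{a\in L}d(a,X\setminus U)\bigr)\subset\mathcal U$ for each compact $L\subset U$), and $\com{f}^n(K)\in\mathcal U$ forces $f^n(x_0)\in U$; hence $\Nc_{\com{f}}(K,\mathcal U)\subset\Nc_f(x_0,U)$, so $\Nc_f(x_0,U)\in\Ac$ and $x_0\in\Ac\Tran(f)$. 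For (b) I would fix $u\in\Ac\Tran(\fuz{f})$ and pick any $x_0\in u_1$; given a non-empty open $U\subset X$, choose $y\in U$ and $0<\eps\leq\tfrac{1}{2}$ with $\Bc_d(y,\eps)\subset U$. If $n\in\Nc_{\fuz{f}}\bigl(u,\Bc_{E}(\chi_{\{y\}},\eps)\bigr)$, put $\delta:=d_{E}(\chi_{\{y\}},\fuz{f}^n(u))<\eps\leq\tfrac{1}{2}$; Lemma~\ref{Lem:key}, applied to $\chi_{\{y\}}$, $\fuz{f}^n(u)$ and $\alpha:=1-\delta\in\,]\delta,1-\delta]$, gives $d_H\bigl(\{y\},f^n(u_{\alpha})\bigr)=d_H\bigl(\{y\},[\fuz{f}^n(u)]_{\alpha}\bigr)\leq\delta<\eps$, and since $x_0\in u_1\subset u_{\alpha}$ this yields $f^n(x_0)\in\Bc_d(y,\eps)\subset U$. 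Thus $\Nc_{\fuz{f}}\bigl(u,\Bc_{E}(\chi_{\{y\}},\eps)\bigr)\subset\Nc_f(x_0,U)\in\Ac$, so $x_0\in\Ac\Tran(f)$.

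For (c) I would fix $x\in\Ac\Tran(f)$; since $\varnothing\notin\Ac$ the orbit $\orb_f(x)$ is dense, so $D:=\{(f^{k_1}(x),\dots,f^{k_N}(x))\ ;\ k_1,\dots,k_N\in\NN_0\}$ is dense in $X^N$, and it is enough to show $D\subset\Ac\Rec(f_{(N)})$. For $\vec{z}=(f^{k_1}(x),\dots,f^{k_N}(x))\in D$ and a basic neighbourhood $U_1\times\cdots\times U_N$ of it, the set $W:=\bigcap_{i=1}^{N}f^{-k_i}(U_i)$ is open and contains $x$, and
\[
\Nc_{f_{(N)}}\bigl(\vec{z},U_1\times\cdots\times U_N\bigr)=\bigl\{n\in\NN_0\ ;\ f^{n+k_i}(x)\in U_i\text{ for all }i\bigr\}=\bigl\{n\in\NN_0\ ;\ f^n(x)\in W\bigr\}=\Nc_f(x,W)\in\Ac ,
\]
the last membership because $x$ is $\Ac$-transitive and $W$ is non-empty open; hereditary upwardness then upgrades this to all neighbourhoods of $\vec{z}$, giving $\vec{z}\in\Ac\Rec(f_{(N)})$. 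For (d) and (e) I would reduce to finitely-supported data: finite subsets of $X$ are $d_H$-dense in $\Kc(X)$, and by Lemma~\ref{Lem:eps.pisos} every $u\in\Fc(X)$ is $d_{\infty}$-close to a simple fuzzy set $\max_{1\leq j\leq M}(\alpha_j\chi_{K_j})$ and then, approximating each $K_j$ by a finite set, to one with finite levels. Using density of $\Ac\Rec(f_{(N)})$ in $X^N$, replace the finite set $F=\{a_1,\dots,a_N\}$ of all points occurring by a nearby $(b_1,\dots,b_N)\in\Ac\Rec(f_{(N)})$ and form $B:=\{b_1,\dots,b_N\}$ (resp. $w:=\max_{1\leq j\leq M}(\alpha_j\chi_{B_j})$, with $B_j$ the corresponding subset of $B$). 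Since $\com{f}^n(B)=\{f^n(b_i)\}_i$ and $[\fuz{f}^n(w)]_{\alpha}=\bigcup\{f^n(B_j)\ ;\ \alpha_j\geq\alpha\}$, Proposition~\ref{Pro:Hausdorff} gives $d_H(\com{f}^n(B),B)\leq\max_i d(f^n(b_i),b_i)$ and $d_{\infty}(\fuz{f}^n(w),w)\leq\max_i d(f^n(b_i),b_i)$, so for each $\eta>0$ the return set $\Nc_{\com{f}}\bigl(B,\Bc_H(B,\eta)\bigr)$ (resp. $\Nc_{\fuz{f}}\bigl(w,\Bc_{\infty}(w,\eta)\bigr)$) contains $\Nc_{f_{(N)}}\bigl((b_1,\dots,b_N),\Bc_d(b_1,\eta)\times\cdots\times\Bc_d(b_N,\eta)\bigr)\in\Ac$; hereditary upwardness gives $B\in\Ac\Rec(\com{f})$ and $w\in\Ac\Rec(\fuz{f})$, hence the required density.

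The step I expect to be the main obstacle is (c): the naive estimate of $\Nc_{f_{(N)}}(\vec{z},\prod_iU_i)$ produces the intersection $\bigcap_i\bigl(\Nc_f(x,U_i)-k_i\bigr)$ of members of $\Ac$, which need not lie in $\Ac$ because $\Ac$ is merely a Furstenberg family, not a filter. The way around it is to notice that this intersection is the \emph{single} return set $\Nc_f\bigl(x,\bigcap_i f^{-k_i}(U_i)\bigr)$ of $f$ at a non-empty open set, to which $\Ac$-transitivity of $x$ applies at once; the other four implications are then routine once the right test open sets are chosen, the needed inputs being Lemma~\ref{Lem:key}, Lemma~\ref{Lem:eps.pisos} and Proposition~\ref{Pro:Hausdorff}.
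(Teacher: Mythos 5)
Your proposal is correct and follows essentially the same route as the paper: parts (a), (d) and (e) coincide with the paper's arguments (suitable test neighbourhoods of $\chi$-type points, finite/simple approximations controlled via Lemma~\ref{Lem:eps.pisos} and Proposition~\ref{Pro:Hausdorff}, and upward heredity of $\Ac$), and your resolution of the non-filter issue in (c) by rewriting the product return set as the single return set $\Nc_f\bigl(x,\bigcap_{i} f^{-k_i}(U_i)\bigr)$ is exactly the standard argument that the paper outsources to the cited references. The only cosmetic divergence is in (b), where you route through Lemma~\ref{Lem:key} with $\alpha:=1-\delta$ instead of the paper's direct unwinding of the inclusion $\eend(\fuz{f}^n(u))\subset\eend(\chi_{\{y\}})+\delta$ at level $1$; both work, since $\delta<\eps\leq\tfrac{1}{2}$ makes $]\delta,1-\delta]$ non-empty and $u_1\subset u_{\alpha}$.
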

\begin{proof}
	(a): Let $K \in \Kc(X)$ be an $\Ac$-transitive point for the map $\com{f}$ and pick any point $x \in K$. We claim that $x \in \Ac\Tran(f)$. Indeed, fixed any $y \in X$ and $\eps>0$, since $K \in \Ac\Tran(\com{f})$ we have that
	\[
	\Nc_{\com{f}}(K,\Bc_H(\{y\},\eps)) \in \Ac.
	\]
	Hence, given any $n \in \Nc_{\com{f}}(K,\Bc_H(\{y\},\eps))$ we have that $f^n(x) \in f^n(K) = \com{f}^n(K) \subset \{y\} + \delta_n$ for some $0<\delta_n<\eps$, so that $d(f^n(x),y)\leq\delta_n<\eps$. We deduce that $\Nc_{\com{f}}(K,\Bc_H(\{y\},\eps)) \subset \Nc_f(x,\Bc_d(y,\eps))$, which implies that $\Nc_f(x,\Bc_d(y,\eps)) \in \Ac$. The arbitrariness of $y \in X$ and $\eps>0$ proves the claim.
	
	(b): Let $u \in \Fc(X)$ be an $\Ac$-transitive point for $\fuz{f}$ with respect to $d_{E}$ and pick any point $x \in u_1$. We claim that $x \in \Ac\Tran(f)$. Indeed, fixed any point $y \in X$ and any positive value $0<\eps\leq1$, since $u$ is $\Ac$-transitive we have that
	\[
	\Nc_{\fuz{f}}(u,\Bc_{E}(\chi_{\{y\}},\eps)) \in \Ac.
	\]
	Hence, given any arbitrary but fixed $n \in \Nc_{\fuz{f}}(u,\Bc_{E}(\chi_{\{y\}},\eps))$ we have that $\delta_n := d_{E}( \fuz{f}^n(u) , \chi_{\{y\}} ) < \eps$. The latter implies that $\eend(\fuz{f}^n(u)) \subset \eend(\chi_{\{y\}}) + \delta_n$. In particular, since $[ \fuz{f}^n(u) ]_1 = f^n(u_1)$, we have that $(f^n(x),1) \in \eend(\fuz{f}^n(u))$ and hence there exist some $(z,\alpha) \in \eend(\chi_{\{y\}})$ such that
	\[
	\com{d}_H\left( (f^n(x),1) , (z,\alpha) \right) := \max\left\{ d(f^n(x),z) , |1-\alpha| \right\} \leq \delta_n < \eps \leq 1.
	\]
	We deduce that $\chi_{\{y\}}(z)\geq\alpha>0$, so that $z=y$, $d(f^n(x),y)<\eps$ and hence $n \in \Nc_f(x,\Bc_d(y,\eps))$. The arbitrariness of $n \in \Nc_{\fuz{f}}(u,\Bc_{E}(\chi_{\{y\}},\eps))$ shows that $\Nc_{\fuz{f}}(u,\Bc_{E}(\chi_{\{y\}},\eps)) \subset \Nc_f(x,\Bc_d(y,\eps))$ and hence that $\Nc_f(x,\Bc_d(y,\eps)) \in \Ac$. The arbitrariness of $y \in X$ and $0<\eps\leq 1$ proves the claim.
	
	(c): This is a well-known general dynamical systems property that has recently and independently been proved in \cite[Proposition~3.8]{CarMur2025_RACSAM_frequently} and \cite[Proposition~4.3 and Remark~4.4]{GrivauxLoPe2025_BJMA_questions-II}. Let us mention that in the references \cite{CarMur2025_RACSAM_frequently,GrivauxLoPe2025_BJMA_questions-II} the notion of ``point-$\Ac$-transitivity'' is called ``$\Ac$-hypercyclicity'' and the notion of ``point-$\Ac$-recurrence'' is simply called ``$\Ac$-recurrence'' for each Furstenberg family $\Ac \subset \Part(\NN_0)$.
	
	To prove statements (d) and (e) we will use that: \textit{for any $K \in \Kc(X)$ and $\eps>0$ there exist points $y_1,...,y_N \in K$ such that every finite set $L := \{ z_1, z_2, ..., z_N \}$ with $d(y_j,z_j)<\frac{\eps}{2}$ for all $1\leq j\leq N$ fulfills that $d_H(K,L)<\eps$}. This fact is probably folklore, but the reader can easily check it by using the compactness condition of $K$ together with statement (a) of Proposition~\ref{Pro:Hausdorff}.
	
	(d): Given $K \in \Kc(X)$ and $\eps>0$ let $y_1,...,y_N \in K$ be such that every $L := \{ z_1, z_2, ..., z_N \}$ with $d(y_j,z_j)<\frac{\eps}{2}$ for all $1\leq j\leq N$ fulfills that $d_H(K,L)<\eps$. Using the point-$\Ac$-recurrence of $(X^N,f_{(N)})$ we can choose a point
	\[
	(x_1,...,x_N) \in \Bc_d(y_1,\tfrac{\eps}{2})\times\cdots\times \Bc_d(y_N,\tfrac{\eps}{2}) \cap \Ac\Rec(f_{(N)}).
	\]
	We omit the routine verification that $\{x_1,...,x_N\} \in \Bc_H(K,\eps) \cap \Ac\Rec(\com{f})$, which implies that the set $\Ac\Rec(\com{f})$ is dense in $\Kc(X)$ by the arbitrariness of $K \in \Kc(X)$ and $\eps>0$.
	
	(e): Given any normal fuzzy set $u \in \Fc(X)$ and $\eps>0$ we can use Lemma~\ref{Lem:eps.pisos} to obtain positive numbers $0 = \alpha_0 < \alpha_1 < \alpha_2 < ... < \alpha_N = 1$ such that $d_H(u_{\alpha}, u_{\alpha_{j+1}})<\tfrac{\eps}{2}$ for each $\alpha \in \ ]\alpha_j, \alpha_{j+1}]$ with $0\leq j\leq N-1$. Hence, for each $1\leq j\leq N$, one can find points $y_1^j,...,y_{k_j}^j \in u_{\alpha_j}$ such that every finite set $L := \{ z_1, z_2, ..., z_{k_j} \}$ with $d(y_l^j,z_l)<\frac{\eps}{4}$ for all $1\leq l\leq k_j$ fulfills that $d_H(u_{\alpha_j},L)<\tfrac{\eps}{2}$. Using the point-$\Ac$-recurrence of $(X^M,f_{(M)})$ for $M=\sum_{j=1}^N k_j$ we can choose a point
	\[
	(x_1^1,...,x_{k_1}^1,...,x_1^j,...,x_{k_j}^j,...,x_1^N,...,x_{k_N}^N) \in \prod_{l=1}^{k_1} \Bc_d(y_l^1,\tfrac{\eps}{4}) \times \cdots \times \prod_{l=1}^{k_j} \Bc_d(y_l^j,\tfrac{\eps}{4}) \times \cdots \times \prod_{l=1}^{k_N} \Bc_d(y_l^N,\tfrac{\eps}{4})
	\]
	but also fulfilling that
	\[
	(x_1^1,...,x_{k_1}^1,...,x_1^j,...,x_{k_j}^j,...,x_1^N,...,x_{k_N}^N) \cap \Ac\Rec(f_{(M)}).
	\]
	Considering $K_j := \{ x_1^j,...,x_{k_j}^j \}$ for each $1\leq j\leq N$ and letting $v:=\max_{1\leq j\leq N} (\alpha_j \cdot \chi_{K_j})$ it follows that $v$ is a normal fuzzy set. Arguing now as in \cite[Theorem~4.1~and~Remark~5.2]{AlvarezLoPe2025_FSS_recurrence}, which essentially uses statement (b) of Proposition~\ref{Pro:Hausdorff}, one can check that $v \in \Bc_{\infty}(u,\eps) \cap \Ac\Rec(\fuz{f})$. This finally implies that the set $\Ac\Rec(\fuz{f})$ is dense in $\Fc_{\infty}(X)$ by the arbitrariness of $u \in \Fc(X)$ and $\eps>0$.
\end{proof}

We are about to obtain a ``point-wise version'' of Theorem~\ref{The:A-tran} but with a slight restriction on the Furstenberg families allowed. In particular, following \cite[Definition~3.3]{CarMur2025_RACSAM_frequently} but also \cite{Glasner2004_TMNA_classifying,HuangLiYe2012_TAMS_family,Li2011_TA_transitive}:
\begin{enumerate}[--]
	\item given a Furstenberg family $\Ac \subset \Part(\NN_0)$ we define $b\Ac$, called the {\em block family associated to~$\Ac$}, in the following way: a set $B \subset \NN_0$ belongs to $b\Ac$ if there exists some $A_B \in \Ac$ such that for each finite subset $F \subset A_B$ there is some $n_F \in \NN_0$ for which $F + n_F := \{ m+n_F \ ; \ m \in F \} \subset B$.
	
	\item a Furstenberg family $\Ac \subset \Part(\NN_0)$ is called a {\em block family} if we have the equality $b\Ac = \Ac$.
\end{enumerate}
Roughly speaking, the block family $b\Ac$ associated to $\Ac$ is the collection of sets that contain every finite block $F$ from a fixed set of the original family~$\Ac$, but possibly translated by $n_F$. Some examples and basic properties, such as the inclusion $\Ac \subset b\Ac$ or the equality $b(b\Ac)=b\Ac$ for every family $\Ac \subset \Part(\NN_0)$, are exposed in \cite[Section~3]{CarMur2025_RACSAM_frequently}. The next crucial fact is also essentially proved there:
\begin{enumerate}[--]
	\item \cite[Proposition~3.6]{CarMur2025_RACSAM_frequently}: \textit{Let $\Ac \subset \Part(\NN_0)$ be a block Furstenberg family. If a dynamical system $(X,f)$ is both point-transitive and point-$\Ac$-recurrent, then $(X,f)$ is point-$\Ac$-transitive.}
\end{enumerate}
For the shake of completeness, and since this result was originally proved for linear dynamical systems, we include here the main argument of its proof: if $(X,f)$ is both point-transitive and point-$\Ac$-recurrent, then we claim that every transitive point is indeed $\Ac$-transitive. In fact, given any point $x \in \Tran(f)$ and any non-empty open subset $U \subset X$ we can pick an $\Ac$-recurrent point $y \in U \cap \Ac\Rec(f)$ so that $\Nc_f(y,U) \in \Ac$. Fixed now any finite subset $F \subset \Nc_f(y,U)$ we have that $V_F := \bigcap_{m \in F} f^{-m}(U)$ is an open subset of $X$ containing $y$, so that there exists $n_F \in \NN_0$ for which $f^{n_F}(x) \in V_F$ and hence $F+n_F \subset \Nc_f(x,U)$. We deduce that $\Nc_f(x,U) \in b\Ac = \Ac$ and hence that $x \in \Ac\Tran(f)$. By Lemma~\ref{Lem:point-A-rec/tran} we can now obtain the already announced (and as far as we know completely new) result:

\begin{theorem}\label{The:point-A-tran}
	Let $\Ac \subset \Part(\NN_0)$ be a block Furstenberg family. Given a continuous map $f:X\longrightarrow X$ on a separable complete metric space $(X,d)$, the following are equivalent:
	\begin{enumerate}[{\em(i)}]
		\item $(X,f)$ is weakly-mixing and point-$\Ac$-transitive;
		
		\item $(X^N,f_{(N)})$ is point-$\Ac$-transitive for every $N \in \NN$;
		
		\item $(\Kc(X),\com{f})$ is point-$\Ac$-transitive;
		
		\item $(\Fc_{\infty}(X),\fuz{f})$ is topologically transitive and point-$\Ac$-recurrent;
		
		\item $(\Fc_{0}(X),\fuz{f})$ is point-$\Ac$-transitive;
			
		\item $(\Fc_{S}(X),\fuz{f})$ is point-$\Ac$-transitive;
			
		\item $(\Fc_{E}(X),\fuz{f})$ is point-$\Ac$-transitive.
	\end{enumerate}
\end{theorem}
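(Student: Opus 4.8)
The plan is to prove the cycle of implications (i) $\Rightarrow$ (ii) $\Rightarrow$ (iii) $\Rightarrow$ (vii) $\Rightarrow$ (i) for the ``strong'' point-$\Ac$-transitivity statements, and separately close the loop through (iv), (v) and (vi). For the implication (i) $\Rightarrow$ (ii), I would invoke the block-family result \cite[Proposition~3.6]{CarMur2025_RACSAM_frequently} quoted just before the theorem: since $(X,f)$ is weakly-mixing, it is point-transitive on separable complete metric spaces (using that a weakly-mixing system either is a singleton or has no isolated points, plus \cite[Theorem~1.16]{GrossePe2011_book_linear}), so by Theorem~\ref{The:A-tran} every $(X^N,f_{(N)})$ is weakly-mixing and hence point-transitive; combined with part~(c) of Lemma~\ref{Lem:point-A-rec/tran} giving point-$\Ac$-recurrence of $(X^N,f_{(N)})$, the block-family result upgrades point-transitivity to point-$\Ac$-transitivity of $(X^N,f_{(N)})$. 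The implication (ii) $\Rightarrow$ (iii) is the well-known folklore fact that $(\Kc(X),\com{f})$ point-transitivity follows from point-transitivity of all finite products — more precisely, one picks a point in $\Kc(X)^N$-transitive position to realize a dense orbit in $\Kc(X)$, and the $\Ac$-transitivity transfers since return times only shrink under restriction; alternatively one can route this through (i) $\Leftrightarrow$ (iii) of Theorem~\ref{The:A-tran} for the topological version plus a Baire category argument as in the proof of Corollary~\ref{Cor:point-rec/tran}.

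For (iii) $\Rightarrow$ (vii) I would mimic \textbf{Step~1} from the introduction: given a point-$\Ac$-transitive $K \in \Kc(X)$ for $\com{f}$, I claim $\chi_K$ (or a suitable fuzzy set built from $K$) is point-$\Ac$-transitive for $\fuz{f}$ in $\Fc_E(X)$. This requires showing that for every $v \in \Fc(X)$ and $\eps>0$ the return set $\Nc_{\fuz{f}}(\chi_K,\Bc_E(v,\eps))$ contains some return set of the form $\Nc_{\com{f}}(K,\Bc_H(L,\eps'))$, which belongs to $\Ac$ by hypothesis; here one uses Lemma~\ref{Lem:eps.pisos} to approximate $v$ by a finitely-valued fuzzy set and the fact that $d_E \leq d_\infty$ together with Proposition~\ref{Pro:Hausdorff}, exactly as in the (iii) $\Rightarrow$ (iv) argument of Theorem~\ref{The:A-tran} but now tracking a single orbit rather than open sets. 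Actually, since (iii) $\Leftrightarrow$ (iv) of Theorem~\ref{The:A-tran} already gives topological transitivity of $(\Fc_E(X),\fuz{f})$ (via $\tau_E \subset \tau_\infty$... no — via the reverse chain), and part (b) of Lemma~\ref{Lem:point-A-rec/tran} will be the tool for the return implication, the cleanest route is: (iii) $\Rightarrow$ [$(X,f)$ weakly-mixing and point-$\Ac$-transitive, by Lemma~\ref{Lem:point-A-rec/tran}(a) and Theorem~\ref{The:A-tran}] $\Rightarrow$ (via the already-established (i)$\Rightarrow$(ii)) point-$\Ac$-transitivity of all products $\Rightarrow$ point-$\Ac$-recurrence of $(\Fc_\infty(X),\fuz{f})$ by Lemma~\ref{Lem:point-A-rec/tran}(e), and point-$\Ac$-transitivity of $(\Fc_\infty(X),\fuz{f})$ would then follow from the block-family result applied to $(\Fc_\infty(X),\fuz{f})$ once we know it is point-transitive.

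So I would actually organize the proof around two hubs: first establish that all seven statements imply ``$(X,f)$ is weakly-mixing and point-$\Ac$-transitive'' (statement~(i)), using Lemma~\ref{Lem:point-A-rec/tran}(a),(b) to descend from $\Kc(X)$ and $\Fc_E(X)$ to $X$, and Theorem~\ref{The:A-tran} plus Corollary~\ref{Cor:point-rec/tran}(b) to get weak-mixing; then establish that (i) implies each of (ii)--(vii), using Lemma~\ref{Lem:point-A-rec/tran}(c),(d),(e) to get point-$\Ac$-recurrence of the various hyperspaces and products, Theorem~\ref{The:A-tran} to get their topological transitivity, and the block-family result \cite[Proposition~3.6]{CarMur2025_RACSAM_frequently} to upgrade ``point-transitive $+$ point-$\Ac$-recurrent'' to ``point-$\Ac$-transitive'' in each of $(X^N,f_{(N)})$, $(\Kc(X),\com{f})$, $(\Fc_0(X),\fuz{f})$, $(\Fc_S(X),\fuz{f})$ and $(\Fc_E(X),\fuz{f})$ — for (iv) we only claim the weaker ``topologically transitive and point-$\Ac$-recurrent'' because, as the paper notes, $\Fc_\infty(X)$ is non-separable and no Baire category argument produces a single $\Ac$-transitive point there. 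The main obstacle is the non-separability of $\Fc_\infty(X)$: the block-family upgrade lemma needs point-transitivity, which on a non-separable space is strictly stronger than topological transitivity and genuinely may fail, so statement (iv) must be phrased asymmetrically, and care is needed to verify that the block-family result applies to the separable hyperspaces $\Kc(X)$, $\Fc_0(X)$, $\Fc_S(X)$, $\Fc_E(X)$ — which requires knowing $\Fc_E(X)$ (and hence $\Fc_0(X)$, $\Fc_S(X)$ by the topology inclusions and a separability transfer) is separable and completely metrizable whenever $X$ is, a point the paper defers to \cite{Lopez2025_arXiv_topological-II} and which I would cite accordingly. The secondary subtlety is checking the block-family hypothesis is preserved, i.e.\ that the \emph{same} Furstenberg family $\Ac$ works for all the induced systems; this is automatic since $\Ac$ is fixed throughout and the block property $b\Ac=\Ac$ is a property of $\Ac$ alone.
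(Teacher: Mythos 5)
Your overall architecture coincides with the paper's proof: the two ``hubs'' (everything implies (i); (i) implies everything), Lemma~\ref{Lem:point-A-rec/tran}(a),(b) to descend from $\Kc(X)$ and $\Fc_E(X)$ to $X$, Lemma~\ref{Lem:point-A-rec/tran}(c),(d),(e) to spread point-$\Ac$-recurrence, Theorem~\ref{The:A-tran} for weak-mixing and topological transitivity, the asymmetric phrasing of (iv), and the block-family upgrade \cite[Proposition~3.6]{CarMur2025_RACSAM_frequently}. The genuine gap lies in how you produce the point-transitivity that feeds into that upgrade on the induced systems. You propose a Baire-category argument on $\Kc(X)$, $\Fc_0(X)$, $\Fc_S(X)$, $\Fc_E(X)$, and therefore need these spaces to be separable and completely metrizable. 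For $\Kc(X)$ this is classical, but for the fuzzy spaces nothing of the sort is available in the paper: \cite{Lopez2025_arXiv_topological-II} is invoked there only for the dichotomy ``$\Fc_E(X)$ is a singleton or has no isolated points'', not for separability or completeness; your ``separability transfer'' along $\tau_E\subset\tau_S\subset\tau_0$ goes the wrong way (separability passes from finer to coarser topologies, so separability of $\Fc_E(X)$ gives nothing for $\Fc_0(X)$); and complete metrizability does not transfer along topology comparisons at all --- completeness of $d_E$ or $d_S$ is a delicate issue the paper never needs to touch.

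The paper avoids Baire category on the fuzzy spaces altogether: point-transitivity of $(\Kc(X),\com{f})$ and $(\Fc_{0}(X),\fuz{f})$ --- and hence of $(\Fc_{S}(X),\fuz{f})$ and $(\Fc_{E}(X),\fuz{f})$ via the topology inclusions --- is drawn from part (b) of Corollary~\ref{Cor:point-rec/tran}, whose only input is point-transitivity of $(X^2,f_{(2)})$, available from weak-mixing together with separability and completeness of $X$ itself, while \cite[Proposition~3.6]{CarMur2025_RACSAM_frequently} requires no completeness or separability of the ambient space. Replacing your Baire step by this appeal to Corollary~\ref{Cor:point-rec/tran}(b) (which you do cite, but only to extract weak-mixing in the reverse direction) repairs the argument and makes it agree with the paper's. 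Two smaller points: the idea floated in your second paragraph, that $\chi_K$ would be a point-$\Ac$-transitive point of $\Fc_E(X)$ whenever $K$ is one for $\com{f}$, is false, since the orbit of $\chi_K$ consists of characteristic functions and these are not $d_E$-dense in $\Fc(X)$ (a fuzzy set with an intermediate membership value attained far from its $1$-level cannot be $d_E$-approximated by any $\chi_L$) --- harmless only because you discarded that route; and your first-paragraph suggestion that point-$\Ac$-transitivity ``transfers since return times only shrink under restriction'' from the products to $\Kc(X)$, or could be obtained by a Baire argument, is unsubstantiated: for a general block family no category argument yields $\Ac$-transitive points (the paper recalls this for $\BDsup$), which is exactly why the recurrence-plus-block-family detour is needed.
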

\begin{proof}
	(i) $\Leftrightarrow$ (ii): The implication (i) $\Rightarrow$ (ii) follows from the separability and completeness of $(X,d)$ together with statement (c) of Lemma~\ref{Lem:point-A-rec/tran} and \cite[Proposition~3.6]{CarMur2025_RACSAM_frequently}, while (ii) $\Rightarrow$ (i) is trivial.
	
	(i) $\Rightarrow$ (iii),(iv): These implications follow from statements (c), (d) and (e) of Lemma~\ref{Lem:point-A-rec/tran} together with part (b) of Corollary~\ref{Cor:point-rec/tran} and \cite[Proposition~3.6]{CarMur2025_RACSAM_frequently}.
	
	(iv) $\Rightarrow$ (v) $\Rightarrow$ (vi) $\Rightarrow$ (vii): This follows from part (b) of Corollary~\ref{Cor:point-rec/tran} together with the topology inclusions $\tau_{E} \subset \tau_{S} \subset \tau_{0} \subset \tau_{\infty}$ and \cite[Proposition~3.6]{CarMur2025_RACSAM_frequently}.
	
	(iii),(vii) $\Rightarrow$ (i): These implications follow from statements (a) and (b) of Lemma~\ref{Lem:point-A-rec/tran} together with Theorem~\ref{The:A-tran} for the family $\Ac_{\infty}$ formed by the infinite subsets of $\NN_0$.
\end{proof}

\begin{remark}\label{Rem:block}
	Let us mention some examples of block families and applications of Theorem~\ref{The:point-A-tran}:
	\begin{enumerate}[(1)]
		\item The family $\Ac_{\neq\varnothing}$ formed by all non-empty subsets of $\NN_0$ and also the already mentioned family~$\Ac_{\infty}$ formed by the infinite subsets of $\NN_0$ are easily checked to be block families. It is worth noticing that {\em point-$\Ac_{\neq\varnothing}$-transitivity} is exactly equal to the usual notion of {\em point-transitivity}. Moreover, if a metric space $(X,d)$ is a singleton or has no isolated points, then {\em point-$\Ac_{\neq\varnothing}$-transitivity} also coincides with the notion of {\em point-$\Ac_{\infty}$-transitivity} for every dynamical system $(X,f)$. Thus, since $(X,d)$ is either a singleton or has no isolated points whenever $(X,f)$ is weakly-mixing, the version of Theorem~\ref{The:point-A-tran} for any of the families $\Ac_{\neq\varnothing}$ or $\Ac_{\infty}$ was already proved in part (b) of Corollary~\ref{Cor:point-rec/tran}.
		
		\item The family $\AP$ formed by the sets that contain arbitrarily long arithmetic progressions (i.e.\ the sets $A \subset \NN_0$ such that given $\ell \in \NN$ there exist $n_0 \in \NN_0$ and $n \in \NN$ with $\{ n_0 + jn \ ; \ 0\leq j\leq \ell \} \subset A$)~is also easily checked to be a block family. Note that along this paper we have already used the notion of {\em point-$\AP$-recurrence} in Corollary~\ref{Cor:point-rec/tran}. Indeed, by \cite[Proposition~3.6]{CarMur2025_RACSAM_frequently} we have that the version of Theorem~\ref{The:point-A-tran} for the family $\AP$ is a consequence of both parts (a) and (b) of Corollary~\ref{Cor:point-rec/tran}.
		
		\item The family of positive upper Banach density sets $\BDsup := \{ A \subset \NN_0 \ ; \ \Bdsup(A)>0 \}$, where
		\[
		\Bdsup(A) := \limsup_{N\to\infty} \left( \max_{m\in\NN_0} \frac{\#(A \cap [m,m+N])}{N+1} \right) \quad \text{ for each } A \subset \NN_0,
		\]
		is also easily checked to be a block family (see \cite[Example~3.5]{CarMur2025_RACSAM_frequently} for further details). The version of Theorem~\ref{The:point-A-tran} for the family $\BDsup$ is, as far as we know, a completely new result in the literature. The {\em $\BDsup$-transitive points} have been called {\em points with reiteratively dense orbit} but also {\em reiteratively hypercyclic points} (see for instance \cite{BonillaGrosseLoPe2022_JFA_frequently,GrivauxLo2023_JMPA_recurrence,GrivauxLoPe2025_BJMA_questions-II,Lopez2024_RIMA_invariant,LopezMe2025_JMAA_two}). We consider worth recalling here that there are {\em topologically mixing} systems that are not {\em point-$\BDsup$-transitive}, i.e.\ there is no general Baire category argument to show the existence of a $\BDsup$-transitive point (see \cite[Theorem~13]{BesMePePu2016_MA_recurrence}).
	\end{enumerate}
\end{remark}
		
Using Remark~\ref{Rem:block}, we end this section by showing an application of Theorem~\ref{The:point-A-tran} in the context of {\em linear dynamical systems}, where we let $f=T$ be a {\em continuous linear operator} acting on a usually {\em infinite-dimensional Fr\'echet space} $X$ (i.e.\ a completely metrizable locally convex vector space). In this setting it is well-known that {\em point-$\BDsup$-transitivity} (which has been called {\em reiterative hypercyclicity}) implies the notion of weak-mixing (see \cite[Proposition~4]{BesMePePu2016_MA_recurrence}). Thus, by Theorem~\ref{The:point-A-tran} we get the following:

\begin{corollary}\label{Cor:point-BD-tran}
	Let $T:X\longrightarrow X$ be a continuous linear operator acting on a Fr\'echet space $X$. Then, the following statements are equivalent:
	\begin{enumerate}[{\em(i)}]
		\item $(X,T)$ is point-$\BDsup$-transitive (i.e.\ reiteratively hypercyclic);
		
		\item $(\Kc(X),\com{T})$ is point-$\BDsup$-transitive (i.e.\ reiteratively hypercyclic);
		
		\item $(\Fc_{0}(X),\fuz{T})$ is point-$\BDsup$-transitive (i.e.\ reiteratively hypercyclic);
		
		\item $(\Fc_{S}(X),\fuz{T})$ is point-$\BDsup$-transitive (i.e.\ reiteratively hypercyclic);
		
		\item $(\Fc_{E}(X),\fuz{T})$ is point-$\BDsup$-transitive (i.e.\ reiteratively hypercyclic).
	\end{enumerate}
\end{corollary}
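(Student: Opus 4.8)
The plan is to obtain the statement as a direct specialization of Theorem~\ref{The:point-A-tran}, applied to the continuous map $f:=T$ and to the Furstenberg family $\Ac:=\BDsup$. Three preliminary observations are needed. First, $\BDsup$ is a block Furstenberg family, as recorded in Remark~\ref{Rem:block}(3). Second, a Fr\'echet space $X$ is in particular a complete metric space, its topology being induced by a complete translation-invariant metric $d$. Third, by \cite[Proposition~4]{BesMePePu2016_MA_recurrence} every reiteratively hypercyclic operator on a Fr\'echet space is weakly-mixing, i.e.\ point-$\BDsup$-transitivity of $(X,T)$ automatically entails the weak-mixing property. The only hypothesis of Theorem~\ref{The:point-A-tran} that is not granted for free is the separability of $X$, so I would split the proof into two cases.

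\textbf{Separable case.} If $X$ is separable, then $(X,d)$ is a separable complete metric space and Theorem~\ref{The:point-A-tran} applies to $(X,T)$ with $\Ac=\BDsup$, giving the equivalence of its seven conditions. Now statements (iii), (v), (vi) and (vii) of Theorem~\ref{The:point-A-tran} are word for word statements (ii), (iii), (iv) and (v) of the present corollary; and statement (i) of Theorem~\ref{The:point-A-tran} --- ``$(X,T)$ is weakly-mixing and point-$\BDsup$-transitive'' --- is equivalent to statement (i) of the corollary by the third preliminary observation above (one implication being trivial, the other being \cite[Proposition~4]{BesMePePu2016_MA_recurrence}). Chaining these identifications through the equivalence furnished by Theorem~\ref{The:point-A-tran} yields the desired equivalence of (i)--(v).

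\textbf{Non-separable case.} If $X$ is not separable, I claim each of (i)--(v) is false, so that the five statements are vacuously equivalent. For any dynamical system $(Y,g)$, a point $y\in\BDsup\Tran(g)$ satisfies $\Nc_g(y,U)\in\BDsup$, hence $\Nc_g(y,U)\neq\varnothing$, for every non-empty open $U\subset Y$ (because $\varnothing\notin\BDsup$, $\BDsup$ being a Furstenberg family); thus $y$ has dense orbit and $Y$ is separable. Taking $(Y,g)=(X,T)$ rules out (i). Since $x\mapsto\{x\}$ is an isometric embedding of $(X,d)$ into $(\Kc(X),d_H)$, and $x\mapsto\chi_{\{x\}}$ is a homeomorphic embedding of $X$ into each of $\Fc_0(X)$, $\Fc_S(X)$ and $\Fc_E(X)$ (isometric for the first two, and isometric up to truncating $d$ at $1$ for the endograph metric), a metric space among $\Kc(X),\Fc_0(X),\Fc_S(X),\Fc_E(X)$ can be separable only if $X$ is; hence none of these spaces is separable, and by the previous remark none of them admits a point-$\BDsup$-transitive map. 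This rules out (ii)--(v).

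I do not expect a genuine obstacle here: the argument is essentially a translation through Theorem~\ref{The:point-A-tran}. The two places requiring attention are the separability hypothesis of that theorem --- handled by the case split above --- and the use of \cite[Proposition~4]{BesMePePu2016_MA_recurrence}, which is the single genuinely linear-dynamical input and is precisely what permits discarding the weak-mixing clause of Theorem~\ref{The:point-A-tran}(i).
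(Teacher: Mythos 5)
Your argument is correct and follows essentially the paper's own route: specialize Theorem~\ref{The:point-A-tran} to the block family $\BDsup$ (Remark~\ref{Rem:block}) and invoke \cite[Proposition~4]{BesMePePu2016_MA_recurrence} to absorb the weak-mixing clause of statement (i) of that theorem. Your explicit case split for non-separable $X$ (showing all five statements are then vacuously false via the embeddings $x\mapsto\{x\}$ and $x\mapsto\chi_{\{x\}}$) is a correct extra precaution covering the separability hypothesis of Theorem~\ref{The:point-A-tran}, which the paper leaves implicit.
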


We do not know if Theorem~\ref{The:point-A-tran} holds for non-block families. Indeed, for the highly studied family of positive lower density sets $\Dinf := \{ A \subset \NN_0 \ ; \ \dinf(A)>0 \}$, where
\[
\dinf(A) := \liminf_{N\to\infty} \frac{\#(A \cap [0,N])}{N+1} \quad \text{ for each } A \subset \NN_0,
\]
it is not even known if {\em point-$\Dinf$-transitivity} for a linear system $(X,T)$ implies or not the respective {\em point-$\Dinf$-transitivity} for $(X^2,T_{(2)})$. This would be a first crucial step to get a similar result to that of Corollary~\ref{Cor:point-BD-tran} for {\em point-$\Dinf$-transitivity} (also called {\em frequent hypercyclicity} in the literature, see \cite{BayartMa2009_book_dynamics,GrossePe2011_book_linear,Lopez2026_MJOM_frequently}).

\section{Devaney chaos and the specification property}\label{Sec_4:chaos_sp}

In this section we focus on Devaney chaos and the specification property, which were considered in the references \cite{MartinezPeRo2021_MAT_chaos} and \cite{BartollMaPeRo2022_AXI_orbit} for fuzzy dynamical systems as described in this paper. In this section, contrary to what we have done in the previous part of this paper, we do not obtain new results. In fact, we only extend the already existent results to the case of the sendograph and endograph metrics.

\subsection{Dense sets of periodic points and Devaney chaos}

Although there is no unified concept of chaos, the notion of Devaney chaos is one of the most usual definitions from those considered in the literature. Given a dynamical system $(X,f)$:
\begin{enumerate}[--]
	\item a point $x \in X$ is called {\em periodic for $f$} if there is some $p \in \NN$ such that $f^p(x)=x$. The minimum positive integer $p \in \NN$ fulfilling that $f^p(x)=x$ is called the {\em period of $x$}, and we will denote by $\Per(f)$ the {\em set of periodic points for $f$}.
	
	\item we say that $(X,f)$ is {\em Devaney chaotic} if it is a topologically transitive system and also the set of periodic points $\Per(f)$ is dense in the space $X$.
\end{enumerate}
It was noticed in \cite[Lemma~1]{MartinezPeRo2021_MAT_chaos} that, given a dynamical system $(X,f)$, then a compact set $K \in \Kc(X)$ is periodic for $\com{f}$ with period $p \in \NN$ if and only if the fuzzy set $\chi_K \in \Fc(X)$ is periodic for $\fuz{f}$ with the same period $p \in \NN$. Using our Lemma~\ref{Lem:key} we can now extend \cite[Proposition~2]{MartinezPeRo2021_MAT_chaos}:

\begin{lemma}\label{Lem:periodic}
	Let $f:X\longrightarrow X$ be a continuous map on a metric space $(X,d)$. Then, the following statements are equivalent:
	\begin{enumerate}[{\em(i)}]
		\item the set of periodic points $\Per(\com{f})$ is dense in $(\Kc(X),d_H)$;
		
		\item the set of periodic points $\Per(\fuz{f})$ is dense in $\Fc_{\infty}(X)$;
		
		\item the set of periodic points $\Per(\fuz{f})$ is dense in $\Fc_{0}(X)$;
		
		\item the set of periodic points $\Per(\fuz{f})$ is dense in $\Fc_{S}(X)$;
		
		\item the set of periodic points $\Per(\fuz{f})$ is dense in $\Fc_{E}(X)$.
	\end{enumerate}
\end{lemma}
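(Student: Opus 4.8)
The plan is to establish the cycle of implications (i)~$\Rightarrow$~(ii)~$\Rightarrow$~(iii)~$\Rightarrow$~(iv)~$\Rightarrow$~(v)~$\Rightarrow$~(i). The three implications (ii)~$\Rightarrow$~(iii), (iii)~$\Rightarrow$~(iv) and (iv)~$\Rightarrow$~(v) are immediate: if a subset of $\Fc(X)$ is dense for a topology, then it is dense for any coarser one, and we have $\tau_{E} \subset \tau_{S} \subset \tau_{0} \subset \tau_{\infty}$. The equivalence (i)~$\Leftrightarrow$~(ii)~$\Leftrightarrow$~(iii) is exactly \cite[Proposition~2]{MartinezPeRo2021_MAT_chaos}, so I would simply cite it; for the record, the construction there takes levels $0 = \alpha_0 < \cdots < \alpha_N = 1$ from Lemma~\ref{Lem:eps.pisos}, uses density of $\Per(\com{f})$ to pick $\com{f}$-periodic compact sets $K_j$ close to $u_{\alpha_j}$, glues them into the $\fuz{f}$-periodic fuzzy set $w := \max_{1 \leq j \leq N} (\alpha_j \cdot \chi_{K_j})$ (whose period is the least common multiple of the periods of the $K_j$, using \cite[Lemma~1]{MartinezPeRo2021_MAT_chaos}), and estimates $d_{\infty}(w,u)$ level-wise via Proposition~\ref{Pro:Hausdorff}(b), exactly as in the proof of Theorem~\ref{The:A-tran}. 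Thus the only genuinely new implication --- the one justifying the sendograph and endograph entries (iv) and (v) --- is (v)~$\Rightarrow$~(i).

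For (v)~$\Rightarrow$~(i) I would argue directly from the key Lemma~\ref{Lem:key}, in complete parallel to the implication (vii)~$\Rightarrow$~(iii) of Theorem~\ref{The:A-tran}. Fix $K \in \Kc(X)$ and $\eps>0$, and assume without loss of generality that $0 < \eps \leq \tfrac{1}{2}$. Using that $\Per(\fuz{f})$ is $d_{E}$-dense, choose a $\fuz{f}$-periodic fuzzy set $u \in \Fc(X)$, of period $p \in \NN$ say, with $\delta := d_{E}(\chi_K,u) < \eps \leq \tfrac{1}{2}$. Since $\delta < \tfrac{1}{2}$ the interval $]\delta,1-\delta]$ is non-empty, so fix any $\alpha$ in it; note $\alpha > 0$ and $\alpha \leq 1$, so $u_{\alpha}$ is a non-empty compact set. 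Lemma~\ref{Lem:key} then gives $d_H(K,u_{\alpha}) \leq \delta < \eps$, i.e.\ $u_{\alpha} \in \Bc_H(K,\eps)$. The remaining point is that $u_{\alpha}$ is itself $\com{f}$-periodic: from $\fuz{f}^{p}(u) = u$ and the level identity $[\fuz{f}^{n}(u)]_{\alpha} = f^{n}(u_{\alpha})$ one gets $\com{f}^{p}(u_{\alpha}) = f^{p}(u_{\alpha}) = [\fuz{f}^{p}(u)]_{\alpha} = u_{\alpha}$. Hence $u_{\alpha} \in \Per(\com{f}) \cap \Bc_H(K,\eps)$, and letting $K$ and $\eps$ vary proves that $\Per(\com{f})$ is dense in $(\Kc(X),d_H)$.

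I do not expect any serious obstacle: once Lemma~\ref{Lem:key} is in hand, the new implication (v)~$\Rightarrow$~(i) is essentially a one-line application of it, and the only verification worth spelling out is that periodicity transfers from $u$ to the level $u_{\alpha}$, which is immediate from the commutation $[\fuz{f}(\cdot)]_{\alpha} = f((\cdot)_{\alpha})$ recalled at the start of Section~\ref{Sec_2:notation}. The genuine conceptual work --- the passage from $(\Kc(X),\com{f})$ to $(\Fc_{\infty}(X),\fuz{f})$ and back through the Skorokhod metric --- was already carried out in \cite{MartinezPeRo2021_MAT_chaos}, and the contribution of this lemma is purely to close the remaining gap down to $d_{S}$ and $d_{E}$.
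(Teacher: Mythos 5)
Your proposal is correct and follows essentially the same route as the paper: the known equivalences (i)$\Leftrightarrow$(ii)$\Leftrightarrow$(iii) are cited from \cite[Proposition~2]{MartinezPeRo2021_MAT_chaos}, the implications down to $d_S$ and $d_E$ come from the topology inclusions $\tau_E \subset \tau_S \subset \tau_0$, and the closing implication (v)$\Rightarrow$(i) is handled exactly as in the paper via Lemma~\ref{Lem:key} together with the level identity $[\fuz{f}^{p}(u)]_{\alpha}=f^{p}(u_{\alpha})$ to transfer periodicity to $u_{\alpha}$. No gaps.
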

\begin{proof}
	(i) $\Leftrightarrow$ (ii) $\Leftrightarrow$ (iii): This was proved in~\cite[Proposition~2]{MartinezPeRo2021_MAT_chaos}.
	
	(iii) $\Rightarrow$ (iv) $\Rightarrow$ (v): This trivially follows from the inclusions $\tau_{E} \subset \tau_{S} \subset \tau_{0}$.
	
	(v) $\Rightarrow$ (i): Given any set $K \in \Kc(X)$ and any $\eps>0$ we must show that $\Bc_H(K,\eps) \cap \Per(\com{f}) \neq \varnothing$. Without loss of generality we will assume that $0<\eps\leq\frac{1}{2}$. Using that $\Per(\fuz{f})$ is dense in $\Fc_{E}(X)$ we can find some fuzzy set $u \in \Fc(X)$ such that
	\[
	u \in \Bc_{E}(\chi_K,\eps) \cap \Per(\fuz{f}).
	\]
	Letting $\delta := d_{E}(\chi_K,u) < \eps \leq \frac{1}{2}$ and picking any $\alpha \in \ ]\delta,1-\delta]$, Lemma~\ref{Lem:key} tells us that $d_H\left( K , u_{\alpha} \right) < \eps$. Moreover, since $u \in \Per(\fuz{f})$ there exists some $p \in \NN$ such that $\fuz{f}^p(u)=u$, which implies that
	\[
	\com{f}^p(u_{\alpha}) = f^p(u_{\alpha}) = [\fuz{f}^{p}(u)]_{\alpha} = u_{\alpha}.
	\]
	We conclude that $u_{\alpha} \in \Bc_H(K,\eps) \cap \Per(\com{f})$, finishing the proof.
\end{proof}

From the definition of Devaney chaos, using Theorem~\ref{The:A-tran} for the family $\Ac_{\infty}$ together with Lemma~\ref{Lem:periodic} and the original results \cite[Corollary~1~and~Theorem~1]{MartinezPeRo2021_MAT_chaos} we get that:

\begin{theorem}\label{The:Devaney}
	Let $f:X\longrightarrow X$ be a continuous map on a metric space $(X,d)$. Then, the following statements are equivalent:
	\begin{enumerate}[{\em(i)}]
		\item $(\Kc(X),\com{f})$ is Devaney chaotic;
		
		\item $(\Fc_{\infty}(X),\fuz{f})$ is Devaney chaotic;
		
		\item $(\Fc_{0}(X),\fuz{f})$ is Devaney chaotic;
		
		\item $(\Fc_{S}(X),\fuz{f})$ is Devaney chaotic;
		
		\item $(\Fc_{E}(X),\fuz{f})$ is Devaney chaotic.
	\end{enumerate}
	Moreover, if $f:X\longrightarrow X$ is a continuous linear operator and $X$ is a Fr\'echet space, then the previous statements are equivalent to:
	\begin{enumerate}[{\em(i)}]
		\item[{\em(vi)}] $(X,f)$ is Devaney chaotic.
	\end{enumerate}
\end{theorem}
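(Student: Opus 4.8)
The plan is to exploit the fact that Devaney chaos is the conjunction of two independent ingredients --- topological transitivity and density of the set of periodic points --- and to transfer each ingredient separately across the five hyperextensions by invoking results already established in the paper.

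For the transitivity ingredient I would apply Theorem~\ref{The:A-tran} to the Furstenberg family $\Ac_{\infty}$ of infinite subsets of $\NN_0$, for which $\Ac_{\infty}$-transitivity is exactly ordinary topological transitivity. Since $\Ac_{\infty}$-transitivity of any of the hyperextensions already forces $(X,f)$ to be weakly-mixing, statement (i) of that theorem reduces to ``$(X,f)$ is weakly-mixing'', and the chain (iii)$\Leftrightarrow$(iv)$\Leftrightarrow$(v)$\Leftrightarrow$(vi)$\Leftrightarrow$(vii) gives that $(\Kc(X),\com f)$, $(\Fc_{\infty}(X),\fuz f)$, $(\Fc_{0}(X),\fuz f)$, $(\Fc_{S}(X),\fuz f)$ and $(\Fc_{E}(X),\fuz f)$ are topologically transitive if and only if any single one of them is. For the periodic-points ingredient I would apply Lemma~\ref{Lem:periodic}, which asserts precisely that $\Per(\com f)$ is $d_H$-dense in $\Kc(X)$ if and only if $\Per(\fuz f)$ is dense in each of $\Fc_{\infty}(X)$, $\Fc_{0}(X)$, $\Fc_{S}(X)$ and $\Fc_{E}(X)$. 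Combining the two: for each of the five systems, being Devaney chaotic means being topologically transitive and having a dense set of periodic points; by the two equivalences just recalled, the conjunction of these conditions holds for one system exactly when it holds for all of them, which yields (i)$\Leftrightarrow$(ii)$\Leftrightarrow$(iii)$\Leftrightarrow$(iv)$\Leftrightarrow$(v).

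For the linear addendum (vi), the remaining task is to connect $(X,f)$ itself to the circle when $f$ is a continuous linear operator on a Fr\'echet space $X$. I would rely on the original equivalence ``$(X,f)$ is Devaney chaotic $\Leftrightarrow$ $(\Kc(X),\com f)$ is Devaney chaotic'' from \cite[Theorem~1 and Corollary~1]{MartinezPeRo2021_MAT_chaos}; alternatively one argues directly. For the forward direction, a Devaney chaotic linear operator is weakly-mixing, so by the transitivity equivalences already established $(\Kc(X),\com f)$ is transitive, while every finite subset of $\Per(f)$ is a periodic point of $\com f$, so $\Per(\com f)$ is dense in $\Kc(X)$; hence $(\Kc(X),\com f)$ is Devaney chaotic. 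For the converse, transitivity of $(\Kc(X),\com f)$ forces $(X,f)$ to be weakly-mixing, in particular topologically transitive; and if $K\in\Per(\com f)$ with $\com f^{p}(K)=K$ is $d_H$-close to a singleton $\{x\}$, its closed convex hull is compact, convex, $\com f^{p}$-invariant and still contained in a prescribed convex neighbourhood of $x$, so the Schauder--Tychonoff theorem produces a $\com f^{p}$-fixed point, i.e.\ an $f$-periodic point, near $x$; hence $\Per(f)$ is dense in $X$. Either way (vi) joins the list of equivalent statements.

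I do not expect a genuine obstacle in this theorem: all of the analytic content --- in particular the passage from the endograph metric back to the Hausdorff metric through Lemma~\ref{Lem:key} --- has already been absorbed into Theorem~\ref{The:A-tran} and Lemma~\ref{Lem:periodic}, so the argument is essentially an assembly of existing pieces. The only point deserving care is the linear case (vi), where one must check that density of $\Per(\com f)$ in $\Kc(X)$ really descends to density of $\Per(f)$ in $X$; this is exactly the place where linearity (compactness of closed convex hulls in a Fr\'echet space together with a fixed-point theorem) is used in an essential way and cannot be dispensed with.
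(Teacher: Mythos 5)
Your proposal is correct and follows essentially the same route as the paper: the paper likewise obtains (i)--(v) by combining Theorem~\ref{The:A-tran} for the family $\Ac_{\infty}$ (transitivity) with Lemma~\ref{Lem:periodic} (density of periodic points), and handles the linear statement (vi) by citing \cite[Corollary~1 and Theorem~1]{MartinezPeRo2021_MAT_chaos}, exactly as you do; your sketched direct argument via weak mixing and Schauder--Tychonoff is just an optional expansion of that citation.
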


As mentioned in~\cite[Section~2]{MartinezPeRo2021_MAT_chaos} we must recall that statement (vi) of Theorem~\ref{The:Devaney} is not equivalent to, nor weaker or stronger than, the other statements of Theorem~\ref{The:Devaney} outside the linear setting. The reader is also referred to \cite[Remark~13~and~Theorem~14]{GarciaKwietLamOPe2009_NA_chaos} for further details.

\subsection{The specification property}

The specification property is a strong dynamical behaviour consisting in the possibility of approximate arbitrary pieces of orbits by segments of a single (usually periodic) orbit. Although there are several versions of the specification property, following \cite{BartollMaPeRo2022_AXI_orbit} we will only be interested in the strongest, namely, the periodic one (see \cite{BartollMaPeRo2022_AXI_orbit,GarciaKwietLamOPe2009_NA_chaos} and the references cited there for further details):
\begin{enumerate}[--]
	\item we say that a dynamical system $(X,f)$ on a metric space $(X,d)$ {\em has the specification property} if: for any $\eps>0$ we can find a positive integer $N_{\eps} \in \NN$ such that, for any integer $s\geq 2$, any finite sequence of points $\{y_1,...,y_s\} \subset X$, and any sequence of integers $0 = i_1 \leq j_1 < i_2 \leq j_2 < \cdots < i_s \leq j_s$ fulfilling that $i_{r+1} - j_r \geq N_{\eps}$ for all $1\leq r<s$, there exists a point $x \in X$ for which
	\begin{equation*}
		d(f^j(x),f^j(y_r))<\eps \text{ for every } 1\leq r\leq s \text{ and } i_r\leq j\leq j_r, \quad \text{ and } \quad f^{N_{\eps}+j_s}(x) = x.
	\end{equation*}
\end{enumerate}

This property may seem quite technical but it is satisfied by many significant dynamical systems in the literature, with shift-like systems being a prominent example. Bauer and Sigmund proved that
\begin{enumerate}[--]
	\item \cite[Proposition~4]{BauerSig1975_MM_topological}: \textit{when $f:X\longrightarrow X$ is a continuous map on a \textbf{compact} metric space $(X,d)$, if the dynamical system $(X,f)$ has the specification property then so does the extended system $(\Kc(X),\com{f})$};
\end{enumerate}
while the converse is not true, since in \cite{GarciaKwietLamOPe2009_NA_chaos} it was proved that
\begin{enumerate}[--]
	\item \cite[Theorem~14]{GarciaKwietLamOPe2009_NA_chaos}: \textit{there exists a dynamical system $(X,f)$ on a \textbf{compact} metric space $(X,d)$ such that $(\Kc(X),\com{f})$ has the specification property while $(X,f)$ does not have the specification property.}
\end{enumerate}
We briefly discuss why \textbf{compactness} can be dropped from~\cite[Proposition~4]{BauerSig1975_MM_topological} in~Corollary~\ref{Cor:sp} below, where following \cite{BartollMaPeRo2022_AXI_orbit} we also give some sufficient conditions on the space $X$ and the map $f$ to get the specification property equivalence between the systems $(X,f)$ and $(\Kc(X),\com{f})$. However, let us start by using our Lemma~\ref{Lem:key} to extend \cite[Theorem~2]{BartollMaPeRo2022_AXI_orbit} for the sendograph and endograph metrics:

\begin{theorem}\label{The:sp}
	Let $f:X\longrightarrow X$ be a continuous map acting on a metric space $(X,d)$. Then, the following statements are equivalent:
	\begin{enumerate}[{\em(i)}]
		\item $(\Kc(X),\com{f})$ has the specification property;
		
		\item $(\Fc_{\infty}(X),\fuz{f})$ has the specification property;
		
		\item $(\Fc_{0}(X),\fuz{f})$ has the specification property;
		
		\item $(\Fc_{S}(X),\fuz{f})$ has the specification property;
		
		\item $(\Fc_{E}(X),\fuz{f})$ has the specification property.
	\end{enumerate}
\end{theorem}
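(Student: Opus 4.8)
The plan is to follow the same scheme used throughout Section~\ref{Sec_3:t.r.Ff}: the implications (ii)~$\Rightarrow$~(iii)~$\Rightarrow$~(iv)~$\Rightarrow$~(v) are immediate from the topology inclusions $\tau_{E} \subset \tau_{S} \subset \tau_{0} \subset \tau_{\infty}$, since a point fulfilling the specification conditions for a weaker metric fulfils them for the stronger one as well. The content of \cite[Theorem~2]{BartollMaPeRo2022_AXI_orbit} gives (i)~$\Leftrightarrow$~(ii), so the only new work is the closing implication (v)~$\Rightarrow$~(i): assuming $(\Fc_{E}(X),\fuz{f})$ has the specification property, deduce that $(\Kc(X),\com{f})$ does too.

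For that step I would fix $\eps>0$ with $\eps\leq\frac12$, and let $N_{\eps}\in\NN$ be the specification constant provided by $(\Fc_{E}(X),\fuz{f})$ for the radius $\eps$; I would then claim the same $N_{\eps}$ works for $(\Kc(X),\com{f})$. Given data $s\geq2$, compact sets $K_1,\dots,K_s\in\Kc(X)$, and integers $0=i_1\leq j_1<i_2\leq\cdots<i_s\leq j_s$ with $i_{r+1}-j_r\geq N_{\eps}$, I would feed the fuzzy specification property the characteristic functions $\chi_{K_1},\dots,\chi_{K_s}\in\Fc(X)$ with the same time windows, obtaining a fuzzy set $u\in\Fc(X)$ with $d_{E}\bigl(\fuz{f}^{j}(u),\fuz{f}^{j}(\chi_{K_r})\bigr)<\eps$ for all $r$ and $i_r\leq j\leq j_r$, together with $\fuz{f}^{N_{\eps}+j_s}(u)=u$. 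Now $\fuz{f}^{j}(\chi_{K_r})=\chi_{f^{j}(K_r)}$, so for each such pair $(r,j)$ we have $d_{E}\bigl(\chi_{f^{j}(K_r)},\fuz{f}^{j}(u)\bigr)<\eps\leq\frac12$; setting $\delta_{r,j}:=d_{E}\bigl(\chi_{f^{j}(K_r)},\fuz{f}^{j}(u)\bigr)$ and picking once and for all an $\alpha\in\ ]\delta_{\max},1-\delta_{\max}]$ where $\delta_{\max}$ is the (finite) maximum of the $\delta_{r,j}$ over the finitely many relevant pairs, Lemma~\ref{Lem:key} yields $d_H\bigl(f^{j}(K_r),[\fuz{f}^{j}(u)]_{\alpha}\bigr)<\eps$, i.e.\ $d_H\bigl(\com{f}^{j}(K_r),\com{f}^{j}(u_{\alpha})\bigr)<\eps$ using $[\fuz{f}^{j}(u)]_{\alpha}=f^{j}(u_{\alpha})$. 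Finally $\fuz{f}^{N_{\eps}+j_s}(u)=u$ forces $\com{f}^{N_{\eps}+j_s}(u_{\alpha})=[\fuz{f}^{N_{\eps}+j_s}(u)]_{\alpha}=u_{\alpha}$, so the compact set $K:=u_{\alpha}\in\Kc(X)$ witnesses the specification property for $(\Kc(X),\com{f})$ with the prescribed data.

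The main obstacle I anticipate is a subtle quantifier issue in choosing the level $\alpha$ uniformly. The specification property quantifies over the radius $\eps$ first and then over arbitrarily long data strings; for a fixed instance of the data the set of ``bad'' levels $]0,\delta_{\max}]\cup\ ]1-\delta_{\max},1]$ has small but positive measure, and crucially $\delta_{\max}<\eps\leq\frac12$ is a strict bound depending only on $\eps$ (not on $s$ or the $j_r$), so a valid $\alpha\in\ ]\delta_{\max},1-\delta_{\max}]$ always exists for each instance — which is exactly what the definition of the specification property requires (the witness point may depend on the data). One should double-check that $d_{E}(\chi_{f^j(K_r)},\fuz f^j(u))<\eps$ really is strict and hence $\delta_{\max}<\frac12$, which follows because the fuzzy specification property gives \emph{strict} inequalities $d_E(\fuz f^j(u),\fuz f^j(\chi_{K_r}))<\eps$. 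A minor point worth stating explicitly is that $u_{\alpha}$ is non-empty and compact for $\alpha\in\ ]0,1]$ by definition of $\Fc(X)$, so $u_{\alpha}\in\Kc(X)$ is legitimate. Beyond this, the argument is a routine transcription of the pattern already executed in the proofs of Theorems~\ref{The:A-tran} and~\ref{The:(l,A)-rec}.
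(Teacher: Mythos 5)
Your proposal is correct and follows essentially the same route as the paper: the chain (ii)~$\Rightarrow$~(iii)~$\Rightarrow$~(iv)~$\Rightarrow$~(v) from the metric inequalities $d_{E}\leq d_{S}\leq d_{0}\leq d_{\infty}$, the known equivalence of (i)--(iii) from the earlier literature, and the closing implication (v)~$\Rightarrow$~(i) obtained by feeding the characteristic functions $\chi_{K_r}$ to the fuzzy specification property with the same constant $N_{\eps}$, taking $\delta<\eps\leq\tfrac12$ as the maximum of the finitely many endograph distances, choosing a single level $\alpha\in\ ]\delta,1-\delta]$, and applying Lemma~\ref{Lem:key} together with $[\fuz{f}^{j}(u)]_{\alpha}=f^{j}(u_{\alpha})$ so that $u_{\alpha}\in\Kc(X)$ is the required tracing (periodic) compact set. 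Your extra remarks on the uniform choice of $\alpha$ and the compactness of $u_{\alpha}$ are exactly the points the paper handles implicitly, so there is nothing to fix.
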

\begin{proof}
	(i) $\Leftrightarrow$ (ii) $\Leftrightarrow$ (iii): This was proved in~\cite[Theorem~2]{MartinezPeRo2021_MAT_chaos}.
	
	(iii) $\Rightarrow$ (iv) $\Rightarrow$ (v): This trivially follows from the inclusions $\tau_{E} \subset \tau_{S} \subset \tau_{0}$ or, more precisely, from the well-known inequalities $d_{E}(u,v) \leq d_{S}(u,v) \leq d_{0}(u,v) \leq d_{\infty}(u,v)$, which hold for all $u,v \in \Fc(X)$.
	
	(v) $\Rightarrow$ (i): By hypothesis $(\Fc_{E}(X),\fuz{f})$ has the specification property. Then, for any $\eps>0$ there exists $\fuz{N}_{\eps} \in \NN$ such that for any integer $s\geq 2$, any finite family of fuzzy sets $\{u^1,...,u^s\} \subset \Fc(X)$, and any sequence of integers $0 = i_1 \leq j_1 < i_2 \leq j_2 < \cdots < i_s \leq j_s$ with $i_{r+1} - j_r \geq \fuz{N}_{\eps}$ for all $1\leq r<s$, there exists a point $v \in \Fc(X)$ for which
	\begin{equation}\label{eq:sp}
		d_E(\fuz{f}^j(v),\fuz{f}^j(u^r)) < \eps \text{ for every } 1\leq r\leq s \text{ and } i_r \leq j\leq j_r, \quad \text{ and } \quad \fuz{f}^{\fuz{N}_{\eps}+j_s}(v)=v.
	\end{equation}
	Let us check that this implies the specification property for $(\Kc(X),\com{f})$. In fact, given $\eps>0$, which can be assumed to fulfill that $0<\eps\leq\tfrac{1}{2}$ without loss of generality, set $\com{N}_{\eps} = \fuz{N}_{\eps} \in \NN$. Now, consider an integer $s\geq 2$, a finite family of non-empty compact sets $\{K_1,...,K_s\} \subset \Kc(X)$ and an arbitrary but fixed sequence of integers $0 = i_1 \leq j_1 < i_2 \leq j_2 < \cdots < i_s \leq j_s$ with $i_{r+1} - j_r \geq \com{N}_{\eps} = \fuz{N}_{\eps}$ for all $1\leq r<s$. Considering now the characteristic function $u^r:=\chi_{K_r}$ for each $1\leq r\leq s$, and applying the specification property to the set $\{u^1,...,u^s\} \subset \Fc(X)$ for the sequence of integers fixed above, we know that there exists $v \in \Fc(X)$ satisfying \eqref{eq:sp}. Let
	\[
	\delta := \max\{ d_E(\fuz{f}^j(v),\fuz{f}^j(u^r)) \ ; \ 1\leq r\leq s \text{ and } i_r \leq j\leq j_r \} < \eps \leq \tfrac{1}{2}
	\]
	and fix any $\alpha \in \ ]\delta,1-\delta]$. Since $\fuz{f}^j(u^r)=\fuz{f}^j(\chi_{K_r})=\chi_{f^j(K_r)}$ for every $1\leq r\leq s$ and $i_r \leq j\leq j_r$, an application of Lemma~\ref{Lem:key} shows that
	\[
	d_H(\com{f}^j(v_{\alpha}),\com{f}^j(K_r)) = d_H( [\fuz{f}^j(v)]_{\alpha} , f^j(K_r) ) < \eps \quad \text{ for every } 1\leq r\leq s \text{ and } i_r\leq j\leq j_r.
	\]
	Moreover, since $\fuz{f}^{\com{N}_{\eps}+j_s}(v)=\fuz{f}^{\fuz{N}_{\eps}+j_s}(v)=v$ we also have that
	\[
	\com{f}^{\com{N}_{\eps}+j_s}(v_{\alpha}) = f^{\com{N}_{\eps}+j_s}(v_{\alpha}) = [\fuz{f}^{\com{N}_{\eps}+j_s}(v)]_{\alpha} = v_{\alpha} \in \Kc(X).
	\]
	We conclude that $(\Kc(X),\com{f})$ has the specification property.
\end{proof}

From Theorem~\ref{The:sp} we obtain the following result improving \cite[Corollary~1~and~Theorem~3]{BartollMaPeRo2022_AXI_orbit}:

\begin{corollary}\label{Cor:sp}
	Let $f:X\longrightarrow X$ be a continuous map acting on a metric space $(X,d)$. Hence:
	\begin{enumerate}[{\em(a)}]
		\item If the dynamical system $(X,f)$ has the specification property, then so do all the extended dynamical systems $(\Kc(X),\com{f})$, $(\Fc_{\infty}(X),\fuz{f})$, $(\Fc_{0}(X),\fuz{f})$, $(\Fc_{S}(X),\fuz{f})$, and $(\Fc_{E}(X),\fuz{f})$.
		
		\item If $f:=T\res_X$, where $T:E\longrightarrow E$ is a continuous linear operator acting on a Fr\'echet space $E$ and $X \subset E$ is a convex $T$-invariant compact subset, then the following statements are equivalent:
		\begin{enumerate}[{\em(i)}]
			\item $(X,f)$ has the specification property;
			
			\item $(\Kc(X),\com{f})$ has the specification property;
			
			\item $(\Fc_{\infty}(X),\fuz{f})$ has the specification property;
			
			\item $(\Fc_{0}(X),\fuz{f})$ has the specification property;
			
			\item $(\Fc_{S}(X),\fuz{f})$ has the specification property;
			
			\item $(\Fc_{E}(X),\fuz{f})$ has the specification property.
		\end{enumerate}
	\end{enumerate}
\end{corollary}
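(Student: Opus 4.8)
The plan is to reduce Corollary~\ref{Cor:sp} to Theorem~\ref{The:sp} plus exactly one extra ingredient in each part: in~(a) the (essentially routine but worth isolating) fact that the Bauer--Sigmund implication does not actually use compactness of the ambient space, and in~(b) the already-established implication ``\textit{specification for $(\Kc(X),\com{f})$ $\Rightarrow$ specification for $(X,f)$}'' from \cite[Theorem~3]{BartollMaPeRo2022_AXI_orbit}.

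For part~(a): by Theorem~\ref{The:sp} it suffices to show that if $(X,f)$ has the specification property then so does $(\Kc(X),\com{f})$; the point is that the proof of \cite[Proposition~4]{BauerSig1975_MM_topological} only invokes compactness of the prescribed sets, not of $X$. Concretely, given $\eps>0$ I would take the integer $N_{\eps/2}$ furnished by the specification property of $(X,f)$ and set $\com{N}_{\eps}:=N_{\eps/2}$. Given $s\geq 2$, compact sets $K_1,\dots,K_s\in\Kc(X)$, and admissible integers $0=i_1\leq j_1<\dots<i_s\leq j_s$ with $i_{r+1}-j_r\geq\com{N}_{\eps}$, for each $r$ the map $z\mapsto\bigl(f^{i_r}(z),\dots,f^{j_r}(z)\bigr)$ is continuous on the compact set $K_r$, so its image is covered by finitely many $\tfrac{\eps}{2}$-balls; pulling these back I obtain a finite subset $\{y^r_1,\dots,y^r_M\}\subset K_r$ (taking a common $M$ by repeating points) which is $\tfrac{\eps}{2}$-dense along the window $[i_r,j_r]$, i.e. every $z\in K_r$ has some $l$ with $d(f^j(z),f^j(y^r_l))<\tfrac{\eps}{2}$ for all $i_r\leq j\leq j_r$. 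For each fixed $l$, applying the specification property of $(X,f)$ to the $s$-tuple $(y^1_l,\dots,y^s_l)$ and the chosen integers yields $x_l\in X$ with $d(f^j(x_l),f^j(y^r_l))<\tfrac{\eps}{2}$ for all $r$, $i_r\leq j\leq j_r$, and $f^{\com{N}_{\eps}+j_s}(x_l)=x_l$. Then $K:=\{x_1,\dots,x_M\}\in\Kc(X)$ satisfies $\com{f}^{\com{N}_{\eps}+j_s}(K)=K$, while Proposition~\ref{Pro:Hausdorff}(a) gives $d_H(\com{f}^j(K),\com{f}^j(K_r))<\eps$ for every $r$ and $i_r\leq j\leq j_r$ (the inclusion $\com{f}^j(K)\subset\com{f}^j(K_r)+\eps$ is immediate; $\com{f}^j(K_r)\subset\com{f}^j(K)+\eps$ follows by combining the $\tfrac{\eps}{2}$-density with the $\tfrac{\eps}{2}$-shadowing via the triangle inequality). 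The statement for all five systems then follows from Theorem~\ref{The:sp}.

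For part~(b): the equivalences (ii)$\Leftrightarrow$(iii)$\Leftrightarrow$(iv)$\Leftrightarrow$(v)$\Leftrightarrow$(vi) are exactly Theorem~\ref{The:sp}, and (i)$\Rightarrow$(ii) is part~(a) applied to $f=T\res_X$. The only remaining implication is (ii)$\Rightarrow$(i), which is \cite[Theorem~3]{BartollMaPeRo2022_AXI_orbit}; for completeness I would recall its argument. Given $\eps>0$, points $y_1,\dots,y_s\in X$ and admissible integers, feed the singletons $\{y_1\},\dots,\{y_s\}\in\Kc(X)$ into the specification property of $(\Kc(X),\com{f})$, using a radius $\eps'\leq\eps$ chosen below, to obtain $K\in\Kc(X)$ with $d_H(\com{f}^j(K),\{f^j(y_r)\})<\eps'$ for $i_r\leq j\leq j_r$ and $\com{f}^{N+j_s}(K)=K$. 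Since $T$ is linear and continuous and $X$ is compact and convex, $\com{\mathrm{co}}(K)\subset X$ is compact and $T^{N+j_s}$ maps it onto itself; as this map is affine and continuous, a fixed-point theorem (Markov--Kakutani) produces $x\in\com{\mathrm{co}}(K)$ with $f^{N+j_s}(x)=x$. Moreover $f^j(x)\in f^j(\com{\mathrm{co}}(K))=\com{\mathrm{co}}(f^j(K))$ lies in the closed convex hull of the $\eps'$-ball around $f^j(y_r)$; since the Fréchet topology of $E$ is locally convex, one can fix $\eps'$ small enough that such closed convex hulls stay within the $\eps$-ball, whence $d(f^j(x),f^j(y_r))<\eps$ for $i_r\leq j\leq j_r$. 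Thus $(X,f)$ has the specification property, and setting $N_{\eps}:=N_{\eps'}$ closes the loop.

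The hard part: there is no real obstacle in part~(a) — its whole content is the observation that the Bauer--Sigmund net construction is insensitive to compactness of the ambient space. In part~(b) the delicate step is (ii)$\Rightarrow$(i), namely manufacturing a single periodic $\eps$-shadowing point out of a shadowing compact set; this is what forces the convexity hypothesis on $X$ (so barycenters and convex hulls remain inside $X$) and requires the local-convexity care above, because metric balls of a Fréchet space need not be convex. As that step is already available in \cite[Theorem~3]{BartollMaPeRo2022_AXI_orbit}, for the present paper it is a citation rather than new work, so the genuinely new content of Corollary~\ref{Cor:sp} is merely the attachment of the systems $(\Fc_S(X),\fuz{f})$ and $(\Fc_E(X),\fuz{f})$ to the equivalence chain via Theorem~\ref{The:sp}, together with the compactness-free version of Bauer--Sigmund supplied in part~(a).
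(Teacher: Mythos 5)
Your proposal is correct and follows essentially the same route as the paper: both parts are reduced to Theorem~\ref{The:sp}, with part~(a) resting on the compactness-free adaptation of Bauer--Sigmund's result \cite[Proposition~4]{BauerSig1975_MM_topological} and part~(b) on \cite[Theorem~3]{BartollMaPeRo2022_AXI_orbit}. The only difference is that you write out explicitly (and correctly) the finite-covering/periodic-shadowing construction and the Markov--Kakutani argument that the paper merely cites or sketches in one line.
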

\begin{proof}
	(a): The fact that the dynamical system $(\Kc(X),\com{f})$ has the specification property was proved in \cite[Theorem~2~and~Proposition~4]{BauerSig1975_MM_topological} for the particular case in which $(X,d)$ is a compact metric space. However, their proof can be easily adapted to general metric spaces by properly using the continuity of the map $\com{f}$ rather than relying on its uniform continuity, which is only available in the compact case. The cases of $(\Fc_{\infty}(X),\fuz{f})$, $(\Fc_{0}(X),\fuz{f})$, $(\Fc_{S}(X),\fuz{f})$ and $(\Fc_{E}(X),\fuz{f})$ follow from Theorem~\ref{The:sp}.
	
	(b): This follows from Theorem~\ref{The:sp} together with \cite[Theorem~3]{BartollMaPeRo2022_AXI_orbit}. See also \cite{BernardesPeRo2017_IEOT_set-valued}.
\end{proof}

\section{Conclusions}\label{Sec_5:conclusions}

In this paper, we have examined the interaction between several properties of a discrete dynamical system $(X,f)$ and its extension $(\Fc(X),\fuz{f})$ on the space of normal fuzzy sets $\Fc(X)$ and with respect to the supremum, Skorokhod, sendograph and endograph metrics. In particular, using Lemma~\ref{Lem:key} we have extended the main results from \cite{AlvarezLoPe2025_FSS_recurrence,BartollMaPeRo2022_AXI_orbit,JardonSanSan2020_MAT_transitivity,MartinezPeRo2021_MAT_chaos}. Actually, we have improved the already existing equivalences for the notions of {\em transitivity}, {\em recurrence}, {\em Devaney chaos} and the {\em specification property}, by adding the systems $(\Fc_{S}(X),\fuz{f})$ and $(\Fc_{E}(X),\fuz{f})$, and for separable complete metric spaces we have also obtained some completely new results in terms of {\em point-$\Ac$-transitivity}. Several questions and comments arise naturally from the theory developed in this paper as we comment in the next lines.

A first natural question is why we consider the space $\Fc(X)$ of normal fuzzy sets. This question is relevant because many papers have studied the dynamics of the fuzzification of a system $(X,f)$, but taking alternative spaces of fuzzy sets. For instance, if for a metric space $(X,d)$ we set
\[
\FF(X) := \left\{ u : X \longrightarrow [0,1] \ ; \ u \text{ is upper-semicontinuous and } u_0 \text{ is compact} \right\},
\]
then the spaces of fuzzy sets
\[
\Fc^{=\lambda}(X) := \left\{ u \in \FF(X) \ ; \ \max\{ u(x) \ ; \ x \in X \} = \lambda \right\}
\]
\[
\Fc^{\geq\lambda}(X) := \left\{ u \in \FF(X) \ ; \ u(x) \geq \lambda \text{ for some } x \in X \right\}
\]
were considered in \cite{Kupka2011_IS_on,WuDingLuWang2017_IJBC_topological} for different values $\lambda \in \ ]0,1]$. However, it is not hard to check that the space $\Fc^{=\lambda}(X)$ is homeomorphic to the space of normal fuzzy sets $\Fc(X)$ considered in this paper for every $\lambda \in \ ]0,1]$, and one of the main results in the theory is that the system $(\Fc^{\geq\lambda}(X),\fuz{f})$ cannot be transitive for $\lambda<1$. These considerations led us to adopt the framework used in \cite{AlvarezLoPe2025_FSS_recurrence,BartollMaPeRo2022_AXI_orbit,JardonSan2021_FSS_expansive,JardonSan2021_IJFS_sensitivity,JardonSanSan2020_FSS_some,JardonSanSan2020_MAT_transitivity,MartinezPeRo2021_MAT_chaos}.

A second natural question concerns the dynamical properties studied in \cite{BartollMaPeRo2022_AXI_orbit,JardonSan2021_FSS_expansive,JardonSan2021_IJFS_sensitivity,JardonSanSan2020_FSS_some,MartinezPeRo2021_MAT_chaos}, mentioned at the Introduction, but not addressed in this paper. Actually, from the list of dynamical properties mentioned at the beginning of this paper, we have only focused on those properties for which the endograph metric $d_{E}$ behaves exactly equal to the supremum, Skorokhod and sendograph metrics:~we have already obtained some results for the {\em expansive}, {\em sensitive} and {\em contractive} properties, for {\em Li-Yorke} and {\em distributional chaos}, and for the {\em shadowing property} (among other dynamical notions), but the endograph metric~$d_{E}$ behaves in an extremely radical way for these notions. Our results concerning these dynamical properties will appear in the forthcoming works \cite{Lopez2026_JIA_topological-II} and \cite{AlvarezLo2026_IS_Li-Yorke}.

Finally, regarding possible future research directions, one may study the dynamical properties of the system $(\Fc(X),\fuz{f})$ when $\Fc(X)$ is endowed with other metrics (see \cite{Huang2022_FSS_some}), as well as extending the analysis to the case where $X$ is a (not necessarily metrizable) uniform space (see \cite{JardonSanSan2023_IJFS_fuzzy,JardonSanSan2026_TA_transitivity}).

\section*{Funding}

The author of this paper was supported by MCIN/AEI/10.13039/501100011033/FEDER, UE, Project PID2022-139449NB-I00.

\section*{Acknowledgments}

The author wants to thank Salud Bartoll, Nilson Bernardes Jr., F\'elix Mart\'inez-Jim\'enez, Alfred Peris, and Francisco Rodenas, for their valuable advice and careful readings of the manuscript. The author is also grateful to \'Angel Calder\'on-Villalobos and Manuel Sanchis for insightful discussions on the topic.

{\footnotesize

}

\newpage

{\footnotesize
$\ $\\

\textsc{Antoni L\'opez-Mart\'inez}: Universitat Polit\`ecnica de Val\`encia, Institut Universitari de Matem\`atica Pura i Aplicada, Edifici 8E, 4a planta, 46022 Val\`encia, Spain. e-mail: alopezmartinez@mat.upv.es
}


\begin{thebibliography}{X}
	\addcontentsline{toc}{section}{References}
	
	\bibitem{AbakumovA2026_CJM_on} E.~Abakumov and A.~Abbar. On several dynamical properties of shifts acting on directed trees. \textit{Canad. J. Math.}, \textbf{78}(2) (2026), 375--404.\\[-4mm]
	
	\bibitem{AlvarezLo2026_IS_Li-Yorke} I.~\'Alvarez and A.~L\'opez-Mart\'inez. Li-Yorke chaos on fuzzy dynamical systems. \textit{Inf. Sci.}, \textbf{749} (2026), 26 pages.\\[-4mm]
	
	\bibitem{AlvarezLoPe2025_FSS_recurrence} I.~\'Alvarez, A.~L\'opez-Mart\'inez, and A.~Peris. Recurrence in collective dynamics: From the hyperspace to fuzzy dynamical systems. \textit{Fuzzy Sets Syst.}, \textbf{506}(109296) (2025), 13 pages.\\[-4mm]
	
	\bibitem{Banks2005_CSF_chaos} J.~Banks. Chaos for induced hyperspace maps. \textit{Chaos Solitons Fractals}, \textbf{25} (2005), 681--685.\\[-4mm]
	
	\bibitem{BartollMaPeRo2022_AXI_orbit} S.~Bartoll, F.~Mart\'inez-Gim\'enez, A.~Peris, and F.~Rodenas. Orbit Tracing Properties on Hyperspaces and Fuzzy Dynamical Systems. \textit{Axioms}, \textbf{11}(12) (2022), 733.\\[-4mm]
	
	\bibitem{BauerSig1975_MM_topological} W.~Bauer and K.~Sigmund. Topological dynamics of transformations induced on the space of probability measures. \textit{Monatsh. Math.}, \textbf{79} (1975), 81--92.\\[-4mm]
	
	\bibitem{BayartMa2009_book_dynamics} F.~Bayart and \'E.~Matheron. \textit{Dynamics of linear operators}. Cambridge University Press, 2009.\\[-4mm]
	
	\bibitem{BernardesPeRo2017_IEOT_set-valued} N.~C.~Bernardes, A.~Peris, and F.~Rodenas. Set-valued chaos in linear dynamics. \textit{Integr. Equ. Oper. Theory}, \textbf{88} (2017), 451--463.\\[-4mm]
	
	\bibitem{BesMePePu2016_MA_recurrence} J.~B\`es, Q.~Menet, A.~Peris, and Y.~Puig. Recurrence properties of hypercyclic operators. \textit{Math. Ann.}, \textbf{366}(1-2) (2016), 545--572.\\[-4mm]
	
	\bibitem{Billingsley1989_book_convergence} P.~Billingsley. \textit{Convergence of Probability Measures}. Wiley, New York, 1968.\\[-4mm]
	
	\bibitem{BonillaGrosseLoPe2022_JFA_frequently} A.~Bonilla, K.-G.~Grosse-Erdmann, A.~L\'opez-Mart\'inez, and A.~Peris. Frequently recurrent operators. \textit{J. Funct. Anal.}, \textbf{283}(12) (2022), 36 pages.\\[-4mm]
	
	\bibitem{CarMur2025_RACSAM_frequently} R.~Cardeccia and S.~Muro. Frequently recurrence properties and block families. \textit{Rev. Real Acad. Cienc. Exactas Fis. Nat. Ser. A-Mat.}, \textbf{119}(3) (2025), 27 pages.\\[-4mm]
	
	\bibitem{Chen2000_book_fuzzy} Q.~Chen. \textit{Fuzzy Values and Their Applications to Fuzzy Reasoning}. Beijing Normal University Press, Beijing, 2000.\\[-4mm]
	
	\bibitem{CostakisMaPa2014_CAOT_recurrent} G.~Costakis, A.~Manoussos, and I.~Parissis. Recurrent linear operators. \textit{Complex Anal. Oper. Theory}, \textbf{8} (2014), 1601--1643.\\[-4mm]
	
	\bibitem{FuXing2012_CSF_mixing} H.~Fu and Z.~Xing. Mixing properties of set-valued maps on hyperspaces via Furstenberg families. \textit{Chaos Solitons Fractals}, \textbf{45}(4) (2012), 439--443.\\[-4mm]

	\bibitem{GarciaKwietLamOPe2009_NA_chaos} J.~L.~Garc\'ia-Guirao, D.~Kwietniak, M.~Lampart, P.~Oprocha, and A.~Peris. Chaos on hyperspaces. \textit{Nonlinear Anal.}, \textbf{71} (2009), 1--8.\\[-4mm]
	
	\bibitem{Glasner2004_TMNA_classifying} E.~Glasner. Classifying dynamical systems by their recurrence properties. \textit{Topol. Methods Nonlinear Anal.}, \textbf{24} (2004), 21--40.\\[-4mm]
	
	\bibitem{GrecoMosReQuel1998_AUFS_on} G.~Greco, M.~Moschen, E.~Rezende, and Q.~F.~E.~Quelho. On the variational convergence of fuzzy sets in metric spaces. \textit{Ann. Univ. Ferrara Sez.}, \textbf{44} (1998), 27--39.\\[-4mm]
	
	\bibitem{GrivauxLo2023_JMPA_recurrence} S.~Grivaux and A.~L\'opez-Mart\'inez. Recurrence properties for linear dynamical systems: An approach via invariant measures. \textit{J. Math. Pures Appl.}, \textbf{169} (2023), 155--188.\\[-4mm]
	
	\bibitem{GrivauxLoPe2025_AMP_questions-I} S.~Grivaux, A.~L\'opez-Mart\'inez, and A. Peris. Questions in linear recurrence I: The $T\oplus T$-recurrence problem. \textit{Anal. Math. Phys.}, \textbf{15}(1) (2025), 26 pages.\\[-4mm]
	
	\bibitem{GrivauxLoPe2025_BJMA_questions-II} S.~Grivaux, A.~L\'opez-Mart\'inez, and A. Peris. Questions in linear recurrence II: Lineability properties. \textit{Banach J. Math. Anal.}, \textbf{19}(61) (2025), 28 pages.\\[-4mm]
	
	\bibitem{GrossePe2011_book_linear} K.-G.~Grosse-Erdmann and A.~Peris. \textit{Linear Chaos}. Springer, London, 2011.\\[-4mm]
		
	\bibitem{Huang2018_FSS_characterizations} H.~Huang. Characterizations of endograph metric and $\Gamma$-convergence on fuzzy sets. \textit{Fuzzy Sets Syst.}, \textbf{350} (2018), 55--84.\\[-4mm]
	
	\bibitem{Huang2022_FSS_some} H.~Huang. Some properties of Skorokhod metric on fuzzy sets. \textit{Fuzzy Sets Syst.}, \textbf{437} (2022), 35--52.\\[-4mm]
	
	\bibitem{HuangLiYe2012_TAMS_family} W.~Huang, H.~Li, and X.~Ye. Family independence for topological and measurable dynamics. \textit{Trans. Am. Math. Soc.}, \textbf{364}(10) (2012), 5209--5242.\\[-4mm]
	
	\bibitem{IllanesNad1999_book_hyperspaces} A.~Illanes and S.~B.~Nadler~Jr. \textit{Hyperspaces: Fundamentals and Recent Advances}. 1st ed., Marcel Dekker, Inc.: New York, NY, USA, 1999.\\[-4mm]
	
	\bibitem{JardonSan2021_FSS_expansive} D.~Jard\'on and I.~S\'anchez. Expansive properties of induced dynamical systems. \textit{Fuzzy Sets Syst.}, \textbf{425} (2021), 48--61.\\[-4mm]
	
	\bibitem{JardonSan2021_IJFS_sensitivity} D.~Jard\'on and I.~S\'anchez. Sensitivity and strong sensitivity on induced dynamical systems. \textit{Iran. J. Fuzzy Syst.}, \textbf{18}(4) (2021), 69--78.\\[-4mm]
	
	\bibitem{JardonSanSan2020_FSS_some} D.~Jard\'on, I.~S\'anchez, and M.~Sanchis. Some questions about Zadeh's extension on metric spaces. \textit{Fuzzy Sets Syst.}, \textbf{379} (2020), 115--124.\\[-4mm]
	
	\bibitem{JardonSanSan2020_MAT_transitivity} D.~Jard\'on, I.~S\'anchez, and M.~Sanchis. Transitivity in fuzzy hyperspaces. \textit{Mathematics}, \textbf{8} (2020), 1862.\\[-4mm]
	
	\bibitem{JardonSanSan2023_IJFS_fuzzy} D.~Jard\'on, I.~S\'anchez, and M.~Sanchis. Fuzzy sets on uniform spaces. \textit{Iran. J. Fuzzy Syst.}, \textbf{20}(6) (2023), 123--135.\\[-4mm]
	
	\bibitem{JardonSanSan2026_TA_transitivity} D.~Jard\'on, I.~S\'anchez, and M.~Sanchis. Transitivity of some uniformities on fuzzy sets. \textit{Topology Appl.}, \textbf{377} (2026), 12 pages.\\[-4mm]
	
	\bibitem{JooKim2000_FSS_the} S.~Y.~Joo and Y.~K.~Kim. The Skorokhod topology on space of fuzzy numbers. \textit{Fuzzy Sets Syst.}, \textbf{111} (2000), 497--501.\\[-4mm]
	
	\bibitem{Kupka2011_IS_on} J.~Kupka. On fuzzifications of discrete dynamical systems. \textit{Inf. Sci.}, \textbf{181} (2011), 2858--2872.\\[-4mm]
	
	\bibitem{KwietniakLiOYe2017_SCM_multi-recurrence} D.~Kwietniak, J.~Li, P.~Oprocha, and X.~Ye. Multi-recurrence and van der waerden systems. \textit{Science China Mathematics}, vol. \textbf{60}, no. 1, 59--82 (2017).\\[-4mm]
	
	\bibitem{Li2011_TA_transitive} J.~Li. Transitive points via Furstenberg family. \textit{Topology Appl.}, \textbf{158}(16) (2011), 2221--2231.\\[-4mm]
	
	\bibitem{LiaoWangZhang2006_SCSAM_transitivity} G.~Liao, L.~Wang, and Y.~Zhang. Transitivity, mixing and chaos for a class of set-valued mappings. \textit{Sci. China Ser. A-Math.}, \textbf{49} (2006), 1--8.\\[-4mm]
	
	
	\bibitem{Lopez2024_RIMA_invariant} A.~L\'opez-Mart\'inez. Invariant measures from locally bounded orbits. \textit{Results Math.}, \textbf{79}(185) (2024), 30 pages.\\[-4mm]
	
	\bibitem{Lopez2026_MJOM_frequently} A.~L\'opez-Mart\'inez. Frequently hypercyclic composition operators on the little Lipschitz space of a rooted tree. \textit{Mediterr. J. Math.}, \textbf{23}(87) (2026), 27 pages.\\[-4mm]
	
	
	\bibitem{Lopez2026_JIA_topological-II} A.~L\'opez-Mart\'inez. Topological dynamics for the endograph metric II: Extremely radical properties. \textit{J. Inequal. Appl.}, \textbf{2026}(77) (2026), 37 pages.\\[-4mm]
	
	\bibitem{LopezMe2025_JMAA_two} A.~L\'opez-Mart\'inez and Q.~Menet. Two remarks on the set of recurrent vectors. \textit{J. Math. Anal. Appl.}, \textbf{541}(1) (2025), 17 pages.\\[-4mm]
	
	
	\bibitem{MartinezPeRo2021_MAT_chaos} F.~Mart\'inez-Gim\'enez, A.~Peris, and F.~Rodenas. Chaos on Fuzzy Dynamical Systems. \textit{Mathematics}, \textbf{9} (2021), 2629.\\[-4mm]
	
	\bibitem{Peris2005_CSF_set-valued} A.~Peris. Set-valued discrete chaos. \textit{Chaos Solitons Fractals}, \textbf{26} (2005), 19--23.\\[-4mm]
	
	\bibitem{RomanChal2008_CSF_some} H.~Rom\'an-Flores and Y.~Chalco-Cano. Some chaotic properties of Zadeh's extensions. \textit{Chaos Solitons Fractals}, \textbf{35} (2008), 452--459.\\[-4mm]
	
	\bibitem{Skorokhod1956_TPA_limit} A.~V.~Skorokhod. Limit theorems for stochastic processes. \textit{Th. Probab. Appl.}, \textbf{1} (1956), 261--290.\\[-4mm]
	
	\bibitem{WuDingLuWang2017_IJBC_topological} X.~Wu, X.~Ding, T.~Lu, and J.~Wang. Topological Dynamics of Zadeh's Extension on Upper Semi-Continuous Fuzzy Sets. \textit{Internat. J. Bifur. Chaos}, \textbf{27}(10) (2017), 13 pages.\\[-4mm]
	
\end{thebibliography}
\end{document}